\documentclass[11pt]{amsart}
\usepackage{mathtools,amsmath,amssymb}
\usepackage{amsthm}
\usepackage{graphicx}
\usepackage[dvipsnames]{xcolor}
\usepackage{tikz-cd}
\usepackage[OT2,T1]{fontenc}
\usepackage{bbm}
\usepackage{stmaryrd}
\usepackage
{hyperref}
\usepackage[backend=biber,style=alphabetic]{biblatex}
\renewbibmacro{in:}{}

\addbibresource{mybib.bib}
\setlength\bibitemsep{0.5\baselineskip}

\mathtoolsset{showonlyrefs,showmanualtags}

\DeclareFieldFormat{postnote}{#1}
\DeclareFieldFormat{multipostnote}{#1}

\DeclareFieldFormat[article]{citetitle}{\mkbibemph{#1}}
\DeclareFieldFormat[article]{title}{\mkbibemph{#1}}
\DeclareFieldFormat[article]{volume}{\mkbibbold{#1}}
\DeclareFieldFormat[article]{journaltitle}{#1}

\DeclareFieldFormat[inbook]{citetitle}{\mkbibemph{#1}}
\DeclareFieldFormat[inbook]{title}{\mkbibemph{#1}}

\DeclareFieldFormat[inproceedings]{citetitle}{\mkbibemph{#1}}
\DeclareFieldFormat[inproceedings]{title}{\mkbibemph{#1}}
\DeclareFieldFormat[inproceedings]{booktitle}{#1}

\DeclareFieldFormat[incollection]{citetitle}{\mkbibemph{#1}}
\DeclareFieldFormat[incollection]{title}{\mkbibemph{#1}}
\DeclareFieldFormat[incollection]{booktitle}{#1}

\hypersetup{colorlinks=true,citecolor=blue,linkcolor=blue,urlcolor=blue,
pdfstartview=FitH }
\allowdisplaybreaks

\textwidth=6.65in \textheight=8.65in \topmargin 0cm \oddsidemargin -0.25cm
\evensidemargin -0.25cm

\theoremstyle{plain}
\newtheorem{theorem}{Theorem}
\numberwithin{theorem}{section}

\newtheorem{lemma}[theorem]{Lemma}
\newtheorem{prop}[theorem]{Proposition}

\theoremstyle{definition}

\newtheorem{remark}[theorem]{Remark}
\newtheorem*{remark*}{Remark}

\newtheorem{example}[theorem]{Example}

\newtheoremstyle{named}{}{}{\itshape}{}{\bfseries}{.}{.5em}{\thmnote{#3's }#1}
\theoremstyle{named}

\DeclarePairedDelimiter\abs{\lvert}{\rvert}%

\DeclareMathOperator{\diag}{diag}

\DeclareMathOperator{\id}{id}
\DeclareMathOperator{\im}{Im}

\DeclareMathOperator{\kl}{Kl}
\DeclareMathOperator{\mo}{mod}

\def\C{\mathbb{C}}

\def\F{\mathbb{F}}
\def\G{\mathbb{G}}

\def\Q{\mathbb{Q}}

\def\Z{\mathbb{Z}}

\setlength{\parindent}{0pt}
\setlength{\parskip}{12pt}

\pagenumbering{arabic}

\title{Kloosterman sums on orthogonal groups}
\author{Catinca Mujdei}
\subjclass[2020]{11L05, 11L07}
\keywords{Kloosterman sums, Pl\"ucker coordinates, orthogonal groups, bounds for exponential sums}
\thanks{\textit{Affiliation}. University College London, London, UK}
\thanks{\textit{E-mail address}. \url{catinca.mujdei.23@ucl.ac.uk}}
\thanks{The author was supported by the Engineering and Physical Sciences Research Council [EP/S021590/1], via the EPSRC Centre for Doctoral Training in Geometry and Number Theory (The London School of Geometry and Number Theory), University College London.}

\makeatletter
\def\Ddots{\mathinner{\mkern1mu\raise\p@
\vbox{\kern7\p@\hbox{.}}\mkern2mu
\raise4\p@\hbox{.}\mkern2mu\raise7\p@\hbox{.}\mkern1mu}}
\makeatother
\makeatletter
\DeclareRobustCommand\widecheck[1]{{\mathpalette\@widecheck{#1}}}
\def\@widecheck#1#2{%
    \setbox\z@\hbox{\m@th$#1#2$}%
    \setbox\tw@\hbox{\m@th$#1%
       \widehat{%
          \vrule\@width\z@\@height\ht\z@
          \vrule\@height\z@\@width\wd\z@}$}%
    \dp\tw@-\ht\z@
    \@tempdima\ht\z@ \advance\@tempdima2\ht\tw@ \divide\@tempdima\thr@@
    \setbox\tw@\hbox{%
       \raise\@tempdima\hbox{\scalebox{1}[-1]{\lower\@tempdima\box
\tw@}}}%
    {\ooalign{\box\tw@ \cr \box\z@}}}
\makeatother

\begin{document}

\begin{abstract}
We study Kloosterman sums on the orthogonal groups $SO_{3,3}$ and $SO_{4,2}$, associated to short elements of their respective Weyl groups. An explicit description for these sums is obtained in terms of multi-dimensional exponential sums. These are bounded by a combination of methods from algebraic geometry and $p$-adic analysis.
\end{abstract}

\maketitle

\section{Introduction}
The classical Kloosterman sum is defined as 
\begin{equation}\label{eq:classic}
S(m,n;c)= \sum_{\substack{d\,(\mo c) \\ (c,d)=1}}e\left(\frac{m\overline{d}+nd}{c}\right),
\end{equation}
for parameters $m,n\in\Z$ and modulus $c\in\Z_{>0}$. Here we use the notation $e(x)=\exp(2\pi ix)$, and $d\overline{d}\equiv 1\,(\mo c)$ if $(c,d)=1$. These exponential sums bear the name of Hendrik Kloosterman, who encountered them in his study of quaternary quadratic forms by a refinement of the circle method \cite{K}. Weil \cite{W} later observed a connection between classical Kloosterman sums and the Riemann hypothesis for curves over finite fields. He proved that
\begin{equation}\label{eq:weil_bound}
\abs{S(m,n;c)} \leq \tau(c)c^{1/2}(m,n,c)^{1/2}
\end{equation}
for $c$ square-free (complemented by the same bound for a prime power $c=p^k$, $k\geq2$, due to Sali\'e \cite{Sa}). This bound is optimal, and it essentially states that Kloosterman sums exhibit square-root cancellation.

Classical Kloosterman sums are ubiquitous in number theory. In the analytic theory of automorphic forms they appear in the Fourier expansion of automorphic kernels. Therefore, having a good understanding of Kloosterman sums is necessary when analyzing the geometric side of the relative trace formula, as motivated in \cite{B}. Higher rank relative trace formulae will require an understanding of the new types of Kloosterman sums that appear in it.

We consider generalized Kloosterman sums as in \cite{rd2}. To define these, consider a split reductive group $G$ over $\Q_p$ with maximal torus $T$. Write $U$ for the unipotent subgroup corresponding to a fixed system of positive roots of $(G,T)$, and $U^-$ for the unipotent subgroup corresponding to the reflections of these positive roots. Consider the normalizer $N= N_G(T)$, the Weyl group $W= N_G(T)/T$, and the associated quotient map $N\rightarrow W:n\mapsto\overline{n}$. For $n\in N$, write $U^{\overline{n}}= U\cap n^{-1}U^-n$, noting that this group only depends on the image of $n$ in $W$. The Bruhat decomposition of $G$ provides a unique product decomposition for each element in $G(\Q_p)$, see e.g. \cite[21.80]{jsm2}. We continue to work over a fixed cell in this decomposition: for $n\in N(\Q_p)$, define the corresponding Kloosterman set
\begin{equation}\label{eq:kloosterman_set}
X(n)= U(\Z_p)\backslash (U(\Q_p)nU^{\overline{n}}(\Q_p)\cap G(\Z_p))/U^{\overline{n}}(\Z_p),
\end{equation}
along with the projection maps
$$u:X(n)\rightarrow U(\Z_p)\backslash U(\Q_p)\quad\text{ and }\quad u':X(n)\rightarrow U^{\overline{n}}(\Q_p)/U^{\overline{n}}(\Z_p)$$
such that $x=u(x)nu'(x)$ for $x\in X(n)$ arbitrary. Given two characters $\psi,\psi':U(\Q_p)\rightarrow \C^\times$ that are trivial on $U(\Z_p)$, the (local) Kloosterman sum is then defined to be
\begin{equation}\label{local}
\kl_p(\psi,\psi';n)=\sum_{x\in X(n)}\psi(u(x))\psi'(u'(x)).
\end{equation}

\begin{example}\label{ex:classical_gl2}
For $G= GL_2$, take $T=\left\{\begin{pmatrix}
t &  \\  & t^{-1}
\end{pmatrix}\,:\,t\in\Q_p^\times \right\}$ and $U= \left\{\begin{pmatrix}
1 & x \\  & 1
\end{pmatrix}\,:\,x\in\Q_p\right\}$. Choosing the normalizing element 
$$n_r=\begin{pmatrix}
& -p^{-r} \\ p^r & 
\end{pmatrix}$$
for some $r\in\Z_{\geq0}$, we have $U^{\overline{n_r}}=U$. Consider the two characters 
$$\psi\begin{pmatrix}
1 & x \\ 0 & 1
\end{pmatrix}= e(mx),\quad\psi'\begin{pmatrix}
1 & x \\ 0 & 1
\end{pmatrix}= e(m'x),$$
for some $m,m'\in\Z$. Given these data, an element in $X(n_r)$ has a coset representative of the form 
$$\begin{pmatrix}
1 & x \\ 0 & 1
\end{pmatrix} n_r\begin{pmatrix}
    1 & y \\ 0 & 1
\end{pmatrix} = \begin{pmatrix}
    xp^r & -p^{-r}+xyp^r \\
    p^r & yp^r
\end{pmatrix}\in G(\Z_p),\quad x,y\in\Q_p/\Z_p,$$
if and only if $x,y\in p^{-r}\Z_p/\Z_p$ and $-p^{-2r}+xy\in p^{-r}\Z_p/\Z_p$. Filling in this information in \eqref{local} gives $\kl_p(\psi,\psi';n_r)=S(m,m';p^r)$.
\end{example}

\begin{remark}
It is straightforward to define local Kloosterman sums more generally for reductive groups $G$ that are non-split over $\Q_p$ (e.g.\ $SO_{4,2}$ for $p\not\equiv 1\,(\mo 4)$) using relative notions. More precisely, the $\Q_p$-split maximal torus $T$ is replaced by a maximal $\Q_p$-split torus $S$, the Weyl group $W$ is replaced by the relative Weyl group ${}_{\Q_p}W$, and the Bruhat decomposition takes on the relative form in \cite[25.27]{jsm2}.
\end{remark}

The size of the Kloosterman set $X(n)$, and thus the trivial bound for a generalized Kloosterman sum, was determined by D{\k a}browski and Reeder \cite{DR}. It remains an open problem to obtain explicit descriptions and non-trivial bounds for arbitrary generalized Kloosterman sums, though some specific cases have already been established. Friedberg \cite{F} considered Kloosterman sums on $GL_n$ for the Weyl element $\begin{psmallmatrix}
    & 1 \\ I_{n-1} &
\end{psmallmatrix}$, to which he applied Deligne's bound for hyper-Kloosterman sums. Non-trivial bounds have also been obtained for Kloosterman sums on $GL_3$ in \cite{S,BFG,DF}, $Sp_4$ in \cite{M}, $GL_4$ for the long Weyl element in \cite{GSW}, and $GL_n$ for the long Weyl element and 
$$w=\begin{psmallmatrix}
    && \pm1 \\
    & -I_{n-2} & \\ 
    1 &&
\end{psmallmatrix}$$
in \cite{BM}.

In this paper, we provide an explicit description of Kloosterman sums on the special orthogonal group
$$SO_{r,s}=\{A\in GL_{r+s}\,:\,AMA^t=M,\,\det(A)=1\},\quad M=\begin{pmatrix}
    && I_s \\
    & I_{r-s} & \\
    I_s &&
\end{pmatrix},$$
for the low-dimensional cases $(r,s)\in\{(3,3),(4,2)\}$. To this end, we apply Friedberg's results on Pl\"ucker coordinates \cite{F} to characterize the elements of $X(n)$. Equipped with these explicit descriptions, we bound the resulting sums. We restrict to Weyl elements of short length as the Plücker coordinates quickly become more complex as the length increases.

To motivate our specific choices for $r$ and $s$ in $SO_{r,s}$, we note that Kloosterman sums on orthogonal groups have so far only been examined for the particular case $SO_{r,1}$, $r\geq0$, in \cite[Sections 4-6]{je} and \cite[Section 4]{jc}. In fact, Corollary 6.15 of the former states that the resulting Kloosterman sums are again classical. Moreover, as can be easily verified with the same methods as in the current paper, the Kloosterman sum associated to the long Weyl element of $SO_{2,2}$ is the product of two classical Kloosterman sums, while the sums for the Weyl elements of length $1$ are classical. This reflects the fact that there is a $2$-to-$1$ homomorphism $SL_2(\Q_p)\times SL_2(\Q_p)\rightarrow SO_{2,2}(\Q_p)$.

For Kloosterman sums on $SO_{3,3}$ we obtain the following explicit expressions, with notation as introduced in Section \ref{sec:so33_sums_section}:
\begin{theorem}
Let $\psi=\psi_{m_1,m_2,m_3}$, $\psi'=\psi_{n_1,n_2,n_3}$ be characters of $U(\Q_p)/U(\Z_p)$, and $r,s\in\Z_{\geq0}$. Then
\begin{alignat*}{3}
\kl_p(\psi,\psi';n_{\id}) &= 1, \\
\kl_p(\psi,\psi';n_{s_\alpha,r}) &= S(m_2,n_2;p^r), \\
\kl_p(\psi,\psi';n_{s_\beta,r}) &= S(m_1,n_1;p^r), \\
\kl_p(\psi,\psi';n_{s_\gamma,r}) &= S(m_3,n_3;p^r), \\
\kl_p(\psi,\psi';n_{s_\alpha s_\beta,r,s}) &= S_3(-n_1,m_1,m_2;p^r,p^s), &&\quad\quad\quad\text{ if }r\leq s, \\
\kl_p(\psi,\psi';n_{s_\alpha s_\gamma,r,s}) &= S(m_2,n_2;p^s)S(m_3,n_3;p^r), \\
\kl_p(\psi,\psi';n_{s_\beta s_\alpha,r,s}) &= S_3(-n_2,m_2,m_1;p^s,p^r), &&\quad\quad\quad\text{ if }r\geq s, \\
\kl_p(\psi,\psi';n_{s_\beta s_\gamma,r,s}) &= S_3(n_3,m_3,m_1;p^s,p^r), &&\quad\quad\quad\text{ if }r\geq s,\\
\kl_p(\psi,\psi';n_{s_\gamma s_\beta,r,s}) &= S_3(-n_1,m_1,m_3;p^r,p^s), &&\quad\quad\quad\text{ if }r\leq s,
\end{alignat*}
where
\begin{equation}\label{eq:gl3_sum}
S_3(m_1,n_1,n_2;p^r,p^s)=\sum_{\substack{x\,(\mo p^r) \\ (x,p^r)=1}}\sum_{\substack{y\,(\mo p^s) \\ (y,p^{s-r})=1}}e\left(\frac{m_1x+n_1\overline{x}y}{p^r}\right)e\left(\frac{n_2\overline{y}}{p^{s-r}}\right),
\end{equation}
with $m_1,n_1,n_2\in\Z_p$, $r,s\in\Z_{\geq0}$, $r\leq s$, $x\overline{x}\equiv 1\,(\mo p^r)$, $y\overline{y}\equiv 1\,(\mo p^{s-r})$.
\end{theorem}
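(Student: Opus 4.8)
The plan is to go through the nine Weyl elements in order of length and, in each case, make the Bruhat cell $U(\Q_p)nU^{\overline n}(\Q_p)\cap G(\Z_p)$ explicit. The identity case is immediate: $U^{\overline{n_{\id}}}$ is trivial, $X(n_{\id})$ consists of the single coset of $n_{\id}$, and $\kl_p(\psi,\psi';n_{\id})=1$. For the three simple reflections $s_\alpha,s_\beta,s_\gamma$, the group $U^{\overline n}$ is the one-parameter root subgroup attached to the corresponding simple root, so the cell is governed by a single copy of $GL_2$ inside $SO_{3,3}$. Writing $x=u(x)nu'(x)$ and imposing $x\in G(\Z_p)$ reduces — exactly as in Example \ref{ex:classical_gl2} — to the $GL_2$ integrality conditions on the two one-parameter unipotents, so $X(n_{s,r})$ is parametrized by $x,y\in p^{-r}\Z_p/\Z_p$ with $xy\equiv p^{-2r}$ modulo $p^{-r}\Z_p$, i.e.\ by a single unit $d\bmod p^r$. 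Restricting $\psi,\psi'$ to that root subgroup picks out one pair of parameters — $m_2,n_2$ for $s_\alpha$, $m_1,n_1$ for $s_\beta$, $m_3,n_3$ for $s_\gamma$, according to which coordinate on $U$ the root corresponds to — and the sum collapses to the stated classical Kloosterman sum.

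The five length-$2$ elements are the substance of the theorem. Here $U^{\overline n}$ is two-dimensional, and a general coset representative $g=unu'$ involves the three coordinates on $U$, the two on $U^{\overline n}$, and the torus scalars $p^r,p^s$; I would write $g$ out explicitly as a $6\times 6$ matrix. To decide when $g\in G(\Z_p)$, rather than invert $g$ directly I would apply Friedberg's criterion \cite{F}: $g$ lies in $G(\Z_p)$ iff a specified finite list of Plücker coordinates of $g$ (minors from the rows spanning the standard isotropic flag fixed by $U$) are $p$-integral, with the leading one in each degree a unit. The relation $gMg^t=M$ forces these isotropic minors to satisfy Plücker-type quadratic relations, so only about half of them are independent, and the integrality conditions become a small system of congruences in the unipotent parameters. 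Solving this system reparametrizes $X(n)$ by a variable $x$ that is a unit mod $p^r$ and a variable $y$ with $(y,p^{s-r})=1$ mod $p^s$ — precisely the index set of $S_3$ in \eqref{eq:gl3_sum} — and substituting back into $\psi(u(x))\psi'(u'(x))$ turns the summand into $e((m_1x+n_1\overline x y)/p^r)e(n_2\overline y/p^{s-r})$, with the arguments read off from the data as in the statement (for example $(-n_1,m_1,m_2)$ for $s_\alpha s_\beta$). For $s_\alpha s_\gamma$ the two simple roots are orthogonal, so their root subgroups commute and the cell factors as a product of two independent rank-$1$ cells at levels $p^s$ and $p^r$, yielding the product $S(m_2,n_2;p^s)S(m_3,n_3;p^r)$ with no constraint between $r$ and $s$. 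In the remaining four cases the reduced congruence system is consistent only when the two $SL_2$-levels are ordered compatibly with $w$, which is the source of the hypotheses $r\le s$ or $r\ge s$; outside that range $X(n)$ is empty or is captured by the opposite length-$2$ element, so the theorem records the value only under the stated inequality.

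The main obstacle is the length-$2$ analysis: enumerating the Plücker relations for $SO_{3,3}$ correctly, using $gMg^t=M$ to cut them down to an independent set, and solving the resulting nonlinear congruences cleanly enough to recover the exact index set and exponential of $S_3$ — including getting the signs of the arguments right, which depends on a fixed, carefully chosen family of representatives $n_{w,r,s}\in N(\Q_p)$ lifting $w$ and on how the scalars $p^r,p^s$ are distributed among the simple-root $SL_2$'s. Once the dictionary between Plücker data and $X(n)$ is in place, tracking the characters and matching to $S_3$ (or to a product of classical sums) is bookkeeping; the case $s_\alpha s_\gamma$ is genuinely easier because of commutativity and serves as a useful consistency check.
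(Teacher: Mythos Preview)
Your proposal is correct and follows essentially the same route as the paper: parametrize each Bruhat cell via Pl\"ucker coordinates, invoke Friedberg's coprimality criterion to identify the integral locus, then read off the Bruhat factorization to evaluate the characters. One point where the paper is slightly more careful than your sketch: Friedberg's criterion is stated for $SL_n$, and applied to $SO_{3,3}$ it gives only \emph{necessary} conditions for an orbit in $U\backslash G$ to contain an element of $\Gamma=SO_{3,3}(\Z_p)$; the paper closes this gap not by appealing to a general sufficiency statement but by, for each admissible set of Pl\"ucker data, explicitly left-multiplying the representative $A\in wTU^w$ by a concrete $\gamma\in U$ so that $\gamma A\in\Gamma$, and then reading off the full Bruhat decomposition $\gamma A = u\cdot n_w\cdot u'$ from that product. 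Since you say you would write $g$ out as an explicit $6\times 6$ matrix anyway, this verification would fall out of your computation, but it is worth flagging that the Pl\"ucker criterion alone does not finish the job for the orthogonal group. (Also, a small inaccuracy: $U$ for $SO_{3,3}$ has six coordinates, not three; only three of them are seen by the character, but all six enter the parametrization of the cell before one passes to Pl\"ucker coordinates.)
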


The $GL_3$ sums \eqref{eq:gl3_sum} were first observed in \cite{BFG}, and were bounded by Larsen:
\begin{theorem}[{\cite[Appendix]{BFG}}]\label{thm:bfg}
For $m_1,n_1,n_2\in\Z_p$, $r,s\in\Z_{\geq0}$ with $r\leq s$, one has
\begin{equation}\label{eq:larsen_bound}
S_3(m_1,n_1,n_2;p^r,p^s) \ll\left(p^{s+\min\{\nu_p(m_1),\nu_p(n_1),r\}},p^{2r+\min\{\nu_p(n_2),s-r\}}\right).
\end{equation}
\end{theorem}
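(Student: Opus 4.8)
The plan is to reduce the two-variable sum \eqref{eq:gl3_sum} to a rank-three hyper-Kloosterman sum modulo $p^r$ and then to bound that. Write $\mathrm{Kl}_3(a,b,c;q)=\sum_{uvw\equiv 1\,(\mo q)}e\big((au+bv+cw)/q\big)$; this is symmetric in $a,b,c$, and the substitution $u=x$, $v=\overline{x}z$, $w=\overline{xz}$ identifies it with $\sum_{x,z\,(\mo q),\,(xz,q)=1}e\big((ax+b\overline{x}z+c\overline{z})/q\big)$. In \eqref{eq:gl3_sum} the factor $e(n_1\overline{x}y/p^r)$ depends on $y$ only modulo $p^r$ and the factor $e(n_2\overline{y}/p^{s-r})$ only modulo $p^{s-r}$, so for $r\ge 1$ and $s>r$ the $y$-summand is a function of $y$ modulo $p^{\max(r,s-r)}$ alone. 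Summing over the free residues in each fibre, carrying out (when $s>2r$) one further summation over the part of $y$ invisible to the first factor, and passing to the variable $z=\overline{x}y$, one should obtain
\[
S_3(m_1,n_1,n_2;p^r,p^s)=p^{s-r}\,\mathrm{Kl}_3\big(m_1,n_1,N_2;p^r\big),
\]
where $N_2=n_2p^{2r-s}$ if $s\le 2r$ and $N_2=n_2p^{-(s-2r)}$ if $s>2r$, with the convention that $S_3=0$ when $s>2r$ and $p^{s-2r}\nmid n_2$; in each surviving case $\nu_p(N_2)=\nu_p(n_2)+2r-s$. The degenerate cases $r=0$ and $s=r$ I would treat by hand: the relevant variable sum is then a Ramanujan sum, and both claimed bounds follow at once.

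Next I would translate \eqref{eq:larsen_bound} into a statement about $\mathrm{Kl}_3$. Substituting the identity above and using $\nu_p(N_2)=\nu_p(n_2)+2r-s$, the first bound in \eqref{eq:larsen_bound} becomes $|\mathrm{Kl}_3(m_1,n_1,N_2;p^r)|\ll p^{r+\min\{\nu_p(m_1),\nu_p(n_1),r\}}$, while the second follows by combining the trivial estimate $|\mathrm{Kl}_3|\le(p^r-p^{r-1})^2<p^{2r}$ with $|\mathrm{Kl}_3(m_1,n_1,N_2;p^r)|\ll p^{r+\min\{\nu_p(N_2),r\}}$. Since $\mathrm{Kl}_3$ is symmetric in its coefficients, it then suffices to establish the single estimate
\[
(\star)\qquad \big|\mathrm{Kl}_3(a,b,c;p^r)\big|\ll p^{r+\min\{\nu_p(a),\nu_p(b),r\}}\qquad(a,b,c\in\Z_p,\ r\ge 1)
\]
with an absolute implied constant.

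To prove $(\star)$ I would set $\delta=\min\{\nu_p(a),\nu_p(b),\nu_p(c)\}$. If $\delta\ge r$ the trivial bound already gives it. If $\delta<r$, then dividing the three coefficients by $p^\delta$ — legitimate since the summand then depends on $u,v,w$ only modulo $p^{r-\delta}$ — gives $\mathrm{Kl}_3(a,b,c;p^r)=p^{2\delta}\mathrm{Kl}_3(a/p^\delta,b/p^\delta,c/p^\delta;p^{r-\delta})$, and a short check of exponents reduces $(\star)$ to the case where at least one of $a,b,c$ is a $p$-adic unit. For $r=1$ I would then invoke Deligne's bound $|\mathrm{Kl}_3(abc;p)|\le 3p$ for $p\nmid abc$ (using $\mathrm{Kl}_3(a,b,c;p)=\mathrm{Kl}_3(abc;p)$ after rescaling the summation variables), together with the elementary observation that if one coefficient lies in $p\Z_p$ then $\mathrm{Kl}_3(a,b,c;p)$ evaluates in terms of Ramanujan sums and has absolute value $\le p$; in all cases this is $\ll p^{1+\min\{\nu_p(a),\nu_p(b),1\}}$. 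For $r\ge 2$ I would use the standard $p$-adic completion: substituting $u\mapsto u(1+p^{\lceil r/2\rceil}\xi)$ and $v\mapsto v(1+p^{\lceil r/2\rceil}\eta)$ and averaging shows that $\mathrm{Kl}_3(a,b,c;p^r)$ is supported on the locus $au\equiv bv\equiv c\overline{uv}\ (\mo p^{\lfloor r/2\rfloor})$; inspecting valuations shows this locus is empty unless $\nu_p(a)=\nu_p(b)=\nu_p(c)$, so — given the unit hypothesis — $\mathrm{Kl}_3$ vanishes unless $a,b,c$ are all units, and in the remaining case $\mathrm{Kl}_3(a,b,c;p^r)=\mathrm{Kl}_3(abc;p^r)$ satisfies $\ll p^r$ through the explicit prime-power evaluation, the residual sum at each of the at most three stationary points being a Gauss sum. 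This yields $(\star)$, and hence the theorem.

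The conceptual heart is the first reduction — spotting the degeneracy in $y$ that collapses $S_3$ onto a hyper-Kloosterman sum in one fewer variable — since afterwards the only substantial analytic inputs are Deligne's bound (for $r=1$) and the explicit $p$-adic evaluation of $\mathrm{Kl}_3$ modulo $p^r$ (for $r\ge 2$). I would expect most of the work to lie in the delicate points: pinning down the vanishing condition and the corrected coefficient $N_2$ across the ranges $s\le 2r$, $s>2r$ and $s=r$; and, in the $p$-adic evaluation, the valuation bookkeeping that makes $\mathrm{Kl}_3\bmod p^r$ vanish exactly for "unbalanced" coefficients while keeping the surviving constant absolute (for odd $r$ this requires carrying the stationary-phase expansion one order further than for even $r$). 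The remaining case analysis — tracking which of $m_1,n_1,n_2$ are $p$-adic units — is routine.
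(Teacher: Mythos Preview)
Your proposal is correct and follows essentially the same strategy the paper sketches (attributing the result to Larsen): reduce by case distinction on $r,s$ to simpler sums, with the only genuinely non-trivial case being $p\nmid m_1n_1n_2$ and $s=2r$, handled via Deligne's hyper-Kloosterman bound for $r=1$ and the $p$-adic stationary phase method for $r\ge 2$. Your uniform packaging of the reduction as $S_3=p^{s-r}\,\mathrm{Kl}_3(m_1,n_1,N_2;p^r)$ (with the degenerate cases $r=0$, $s=r$ treated separately, as you note) is a clean way to organise what the paper describes only as ``reduce to a simpler sum'', but the analytic inputs are identical.
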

\vspace{-3mm}
Here $\nu_p:\Q_p\rightarrow\Z\cup\{\infty\}$ denotes the $p$-adic valuation.

Regarding $SO_{4,2}/\Q_p$, which is non-split if and only if $p\not\equiv 1\,(\mo 4)$, we only compute the corresponding Kloosterman sums for those primes. We obtain the folowing explicit expressions, with notation as introduced in Section \ref{sec:so42_sums_section}:
\begin{theorem}
Let $\psi=\psi_{m_1,m_2,m_3}$, $\psi'=\psi_{n_1,n_2,n_3}$ be characters of $U(\Q_p)/U(\Z_p)$, and $r\in\Z_{\geq0}$. Then
\begin{align*}
\kl_p(\psi,\psi';n_{\id}) &= 1, \\
\kl_p(\psi,\psi';n_{s_\alpha,r}) &= S_4(m_2,m_3,n_2,n_3;p^r), \\
\kl_p(\psi,\psi';n_{s_\beta,r}) &= S(m_1,n_1;p^r),
\end{align*}
where 
\begin{align}
&S_4(m_2,m_3,n_2,n_3;p^r) \\
&\quad= 
\begin{cases}
\sum\limits_{\substack{x,y\,(\mo p^{r/2}) \\ (x,y,p)=1}}e\left(-2\frac{(m_2x-m_3y)(\overline{x^2+y^2})}{p^{r/2}}\right)e\left(\frac{n_2x+n_3y}{p^{r/2}}\right) &\text{ if }p\equiv3\,(\mo 4), r\text{ is even}, \\
\sum\limits_{\substack{x,y\,(\mo p^{(r-1)/2}) \\ (x,y,p)=1 \\ p\vert xy}}e\left(-\frac{(m_2x-m_3y)(\overline{x^2+y^2})}{p^{(r-1)/2}}\right)e\left(\frac{n_2x+n_3y}{p^{(r-1)/2}}\right) &\text{ if }p=2, r\geq3\text{ is odd}, \\
\sum\limits_{\substack{x,y\,(\mo p^{r/2}) \\ (xy,p)=1}}e\left(-\frac{(m_2x-m_3y)\overline{(x^2+y^2)p^{-1}}}{p^{r/2}}\right)e\left(\frac{n_2x+n_3y}{p^{r/2}}\right) &\text{ if }p=2, r\text{ is even}, \\
 \\
0 &\text{ otherwise}
\end{cases}. \label{eq:so42_sum}
\end{align}
\end{theorem}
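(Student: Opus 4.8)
The plan is to evaluate each Kloosterman set $X(n)$ explicitly through the (relative) Bruhat decomposition and Friedberg's Pl\"ucker-coordinate criterion, reproducing the strategy used for $SO_{3,3}$ in Section~\ref{sec:so33_sums_section}. I would begin by recording the structure of $G=SO_{4,2}$ over $\Q_p$ for $p\not\equiv 1\,(\mo 4)$: a maximal $\Q_p$-split torus of rank $2$, relative root system of type $B_2$ with simple reflections $s_\alpha$ (short) and $s_\beta$ (long), and unipotent radical $U$ of dimension $6$ whose short-root subgroups ($\alpha$ and $\alpha+\beta$) are two-dimensional and whose long-root subgroups ($\beta$ and $2\alpha+\beta$) are one-dimensional, so that $U/[U,U]$ has rank $3$ and the character $\psi=\psi_{m_1,m_2,m_3}$ has $m_1$ on the $\beta$-line and $(m_2,m_3)$ on the $\alpha$-plane. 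I would then fix explicit matrix coordinates on $U$, on $U^{\overline{s_\alpha}}=U_\alpha$ (two-dimensional, with coordinates $(x,y)$) and on $U^{\overline{s_\beta}}=U_\beta$ (one-dimensional), using that $U^{\overline{s}}$ is the root subgroup of the unique positive root negated by a simple reflection $s$, together with representatives $n_{s_\alpha,r},n_{s_\beta,r}\in N(\Q_p)$. Here it is convenient that $G(\Z_p)=SO_{4,2}(\Q_p)\cap M_6(\Z_p)$, which holds since $M^2=I_6$.

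The identity case is immediate: $X(n_{\id})$ is a single point on which $\psi,\psi'$ are trivial, so $\kl_p(\psi,\psi';n_{\id})=1$. For $n_{s_\beta,r}$ all relevant root subgroups are one-dimensional and the computation is formally the same as in Example~\ref{ex:classical_gl2}: writing $x=u\,n_{s_\beta,r}\,u'$ and imposing integrality of the entries of the $6\times 6$ matrix cuts the $(u,u')$-configuration down to a single classical Kloosterman pattern in the $\beta$-coordinate of $u$ and the coordinate of $u'$, yielding $\kl_p(\psi,\psi';n_{s_\beta,r})=S(m_1,n_1;p^r)$.

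The substantive case is $n_{s_\alpha,r}$. Here I would parametrize $u$ by its six coordinates and $u'=u'(x,y)$, form the matrix $u\,n_{s_\alpha,r}\,u'$, and impose integrality of all entries. The key point is that, after the torus twist built into $n_{s_\alpha,r}$, the governing off-diagonal block of this matrix is a scalar multiple of the regular representation $\begin{psmallmatrix}x&y\\-y&x\end{psmallmatrix}$ of the Gaussian number $x+iy$, so the invertibility and primitivity needed for $x$ to lie in the correct Bruhat cell \emph{and} in $G(\Z_p)$ are controlled by its norm $x^2+y^2$, which is precisely the norm form of $\Q_p(i)/\Q_p$ (whence the hypothesis $p\not\equiv 1\,(\mo 4)$). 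Solving the integrality equations forces every coordinate of $u$ other than the two $\alpha$-coordinates to be $\psi$-invisible, explaining the absence of $m_1$, while only the $\alpha$-part $(n_2,n_3)$ of $\psi'$ contributes since $u'\in U_\alpha$, explaining the absence of $n_1$; the two $\alpha$-coordinates of $u$ are pinned, modulo $\Z_p$, to quantities proportional to $(x,-y)\,\overline{x^2+y^2}$, and substituting these into $\psi(u(x))\psi'(u'(x))$ produces the summand of \eqref{eq:so42_sum}. The case split is then a valuation analysis of $x+iy\in\Z_p[i]$: for $p\equiv 3\,(\mo 4)$ the ring $\Z_p[i]$ is the maximal order of the unramified quadratic extension, norms have even valuation, the cell is nonempty only for even $r$, the primitivity condition becomes $(x,y,p)=1$, and $\overline{x^2+y^2}$ is read modulo $p^{r/2}$; for $p=2$ the extension $\Q_2(i)$ is ramified with uniformizer $1+i$, splitting the analysis into the two listed sub-cases ($r$ even, modulus $2^{r/2}$, condition $(xy,p)=1$, twisted inverse $\overline{(x^2+y^2)p^{-1}}$; and $r\geq 3$ odd, modulus $2^{(r-1)/2}$, condition $p\mid xy$), and in every remaining case $X(n_{s_\alpha,r})=\varnothing$, giving $0$.

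I expect the main obstacle to be the $p=2$ ramified bookkeeping: extracting the exact moduli $p^{r/2}$ and $p^{(r-1)/2}$, the precise primitivity conditions $(x,y,p)=1$ versus $(xy,p)=1$ versus $p\mid xy$, and the twist $\overline{(x^2+y^2)p^{-1}}$ from the matrix equations, and showing cleanly that the Kloosterman set is empty in the untabulated cases. A secondary point is that Friedberg's Pl\"ucker-coordinate description is stated for $GL_n$; I would transfer it by working inside the embedding $SO_{4,2}\hookrightarrow GL_6$, computing the $GL_6$ Pl\"ucker coordinates of elements of the cell and verifying that the orthogonality relations are compatible with the coset identifications in \eqref{eq:kloosterman_set}, so that no spurious constraints appear.
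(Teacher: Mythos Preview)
Your proposal is correct and tracks the paper's argument closely: both parametrize the relevant Bruhat cell, impose integrality, and read off the sum from the explicit factorization $u\,n\,u'$. The difference is one of organization. The paper leads with Pl\"ucker coordinates (Proposition~\ref{prop:so42_inj} together with Friedberg's coprimality criterion applied inside $GL_6$), uses the centralizer $D=C\cap\Gamma$ to normalize away the anisotropic $SO_2$-block, writes $v_{35}=ap^k$, $v_{45}=bp^l$, and carries out the valuation case analysis on $(k,l)$ directly; only afterward does it exhibit the explicit element of $U$ whose $\alpha$-coordinates are $\bigl(-\tfrac{2v_{35}}{v_{35}^2+v_{45}^2},\tfrac{2v_{45}}{v_{35}^2+v_{45}^2}\bigr)$ and read off the summand. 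You instead go straight to the factorization $u\,n_{s_\alpha,r}\,u'$ and organize the valuation analysis around the arithmetic of $\Z_p[i]$---unramified for $p\equiv 3\,(\mo 4)$, ramified with uniformizer $1+i$ for $p=2$---which is a cleaner conceptual explanation of why the norm $x^2+y^2$ controls the parity of $r$, the precise primitivity conditions, and the emptiness of $X(n_{s_\alpha,r})$ in the untabulated cases. The paper's Pl\"ucker route is more systematic and scales to longer Weyl elements, while your Gaussian-integer framing makes the $s_\alpha$ case structurally transparent; the two are equivalent here.
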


Note that for the case $p=2$ and $r\geq0$ even in \eqref{eq:so42_sum}, the values of $x^2$ and $y^2$ are well-defined modulo $p^{r/2+1}$, and $\nu_p(x^2+y^2)=1$, so the value of $(x^2+y^2)p^{-1}$ is well-defined and invertible modulo $p^{r/2}$. 

By a $p$-adic stationary phase method and a result of Hooley \cite{H}, we obtain the following bound:
\begin{theorem}\label{thm:so42_bounds}
Consider the following:
\begin{itemize}
\item[-] If $p\equiv3\,(\mo 4)$ and $r>2$, or if $p=2$ and $r>3$, assume that $(n_2n_3,p)=1$ and that at least one of $m_2n_3-m_3n_2,m_2n_2+m_3n_3\in \Z/p^{\lfloor r/4\rfloor}\Z$ is non-zero.
\item[-] If $p\equiv3\,(\mo 4)$ and $r=2$, assume that $(m_2,m_3,p)=(n_2,n_3,p)=(m_2n_3+m_3n_2,p)=1$. 
\item[-] If $p\equiv3\,(\mo 4)$ and $r=4t+2$ for some $t\in\Z_{\geq1}$, assume that $(m_2^2+m_3^2,p)=1$.
\item[-] If $p=2$ is and $r$ is even, assume that $(m_2m_3,p)=1$.
\end{itemize}
Then $S_4(m_2,m_3,n_2,n_3;p^r) \ll 
p^{r/2}$.
\end{theorem}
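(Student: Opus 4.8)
\emph{Proof idea.} In every case where $S_4$ is not identically zero it is a two‑dimensional exponential sum to a prime‑power modulus $q$ (namely $q=p^{r/2}$ when $r$ is even and $q=p^{(r-1)/2}$ in the remaining $p=2$ case), whose phase $f(x,y)$ is a rational function whose denominator --- $x^{2}+y^{2}$, or $(x^{2}+y^{2})/p$ when $p=2$ --- is a unit on the range of summation: for $p\equiv 3\,(\mo 4)$ because $p\mid x^{2}+y^{2}$ forces $p\mid x$ and $p\mid y$, contradicting $(x,y,p)=1$, and for $p=2$ by the remark following \eqref{eq:so42_sum}. For $p$ odd it is convenient to record that, writing $z=x+iy$, one has $f(x,y)=\re\!\big(c_{1}z-c_{2}z^{-1}\big)$ with $c_{1}=n_{2}-in_{3}$ and $c_{2}$ a unit multiple of $m_{2}-im_{3}$, so that $S_4$ is essentially a Kloosterman sum over the residue ring $\Z[i]/q$. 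In particular, when $r=2$ (so $q=p$) and the parameters are units, $S_4$ is a classical Kloosterman sum over $\F_{p^{2}}$ with nonzero arguments, and the Weil bound \eqref{eq:weil_bound} applied over $\F_{p^{2}}$ gives $|S_4|\le 2p=2p^{r/2}$; this disposes of the case $r=2$.

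For $r>2$ the plan is a $p$‑adic stationary phase argument. Put $a=\nu_{p}(q)$ and split each variable as $x=x_{0}+p^{\lceil a/2\rceil}x_{1}$, $y=y_{0}+p^{\lceil a/2\rceil}y_{1}$ with $(x_{0},y_{0})$ modulo $p^{\lceil a/2\rceil}$ and $(x_{1},y_{1})$ modulo $p^{\lfloor a/2\rfloor}$. Since $2\lceil a/2\rceil\ge a$, the second‑ and higher‑order terms of the Taylor expansion of $f$ vanish modulo $q$, and the complete inner sum over $(x_{1},y_{1})$ equals $p^{2\lfloor a/2\rfloor}$ or $0$ according as $p^{\lfloor a/2\rfloor}\mid\nabla f(x_{0},y_{0})$ or not. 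Hence
\[
|S_4|\ \le\ p^{2\lfloor a/2\rfloor}\cdot\#\big\{(x_{0},y_{0})\bmod p^{\lceil a/2\rceil}\ :\ (x_{0},y_{0},p)=1,\ p^{\lfloor a/2\rfloor}\mid\nabla f(x_{0},y_{0})\big\}=:p^{2\lfloor a/2\rfloor}N,
\]
and it suffices to show $N\ll 1$ when $a$ is even and $N\ll p$ when $a$ is odd. When $a$ is odd ($r\equiv 2\,(\mo 4)$) one moreover performs one further expansion at level $p^{\lfloor a/2\rfloor}$: the innermost sum becomes a quadratic Gauss sum over $\F_{p^{2}}$ whose leading coefficient is a unit multiple of $\det\operatorname{Hess}f$ at the critical point, which one computes to be a unit precisely when $m_{2}^{2}+m_{3}^{2}$ is a unit --- this is the source of the hypothesis $(m_{2}^{2}+m_{3}^{2},p)=1$ for $r=4t+2$ --- and this recovers the extra factor of $p$.

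It remains to bound $N$. Clearing denominators, $\nabla f(x,y)=0$ is equivalent to the pair of cubic equations
\[
(n_{2}x+n_{3}y)(x^{2}+y^{2})=-2(m_{2}x-m_{3}y),\qquad (n_{3}x-n_{2}y)(x^{2}+y^{2})=-2(m_{3}x+m_{2}y),
\]
equivalently, in the $z$‑notation, to $z^{2}=-c_{2}/c_{1}$. Since $p\equiv 3\,(\mo 4)$ and $(n_{2}n_{3},p)=1$ force $n_{2}^{2}+n_{3}^{2}$ to be a unit, the two cubics have no common zero at infinity over $\overline{\F}_{p}$, so B\'ezout bounds their affine intersection, with multiplicity, by $9$; a common zero that is simple modulo $p$ and lies away from $(0,0)\bmod p$ lifts uniquely by Hensel and contributes exactly one solution to the congruence defining $N$. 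The hypothesis that one of $m_{2}n_{3}-m_{3}n_{2}$, $m_{2}n_{2}+m_{3}n_{3}$ be nonzero in $\Z/p^{\lfloor r/4\rfloor}\Z$ is exactly the assertion that the critical locus does not collapse into the excluded region $(0,0)\bmod p$ to depth $\lfloor r/4\rfloor=\lfloor a/2\rfloor$; the degenerate common zeros of the two cubics are then controlled by Hooley's bound \cite{H} on the number of solutions modulo a prime power of a polynomial congruence whose discriminant has controlled $p$‑valuation. This yields $N\ll 1$ (resp. $N\ll p$), and combining with the previous paragraph gives $S_4\ll p^{r/2}$.

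The case $p=2$ follows the same scheme with three adjustments: $2$ ramifies in $\Z[i]$, so the clean description of $S_4$ as a Kloosterman sum over $\Z[i]/q$ is lost; the extra parity constraint on $(x,y)$ in \eqref{eq:so42_sum} must be incorporated into the splitting of variables; and the Gauss sums produced in the stationary‑phase step carry bounded extra powers of $2$ that one checks are harmless, with $(m_{2}m_{3},p)=1$ serving as the relevant nondegeneracy condition for $r$ even, and the leftover short sums in the $r$ odd case estimated trivially. I expect the main obstacle to be exactly the critical‑point bookkeeping: identifying the resultant/discriminant that governs the solution count of the cubic system, checking that each stated hypothesis pins its $p$‑valuation below the depth $\lceil r/4\rceil$ forced by the two successive halvings of the modulus, and thereby confirming that Hooley's estimate genuinely outputs $O(1)$ (or $O(p)$) rather than a growing power of $p$; the $p=2$ ramified analysis is a secondary, more computational obstacle.
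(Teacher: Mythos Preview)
Your overall strategy---$p$-adic stationary phase for $r>2$, with a quadratic Gauss-sum refinement when the half-modulus is odd, and an algebraic bound for $r=2$---is exactly the paper's, but the execution differs in two places. For $r=2$ the paper realises $S_4$ as a sum over the surface $(x_1^2+x_2^2)x_3=1$ in $\A^3_{\F_p}$ and applies Hooley's Theorem~\ref{thm:hooley}, spending some effort on the generic irreducibility of the fibres $W(t)$ (this is the origin of the extra hypothesis $(m_2n_3+m_3n_2,p)=1$); your Gaussian-integer interpretation instead identifies $S_4$ directly as a Kloosterman sum over $\F_{p^2}$, so the Weil bound gives $\lvert S_4\rvert\le 2p$ under the weaker assumption $(m_2,m_3,p)=(n_2,n_3,p)=1$. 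For the critical-point count when $r>2$, the paper's argument is your equation $z^2=-c_2/c_1$ written in real coordinates: it substitutes $c=a^{-1}b$, notes that both critical congruences become linear in $a^2$ with unit leading coefficients (this is where $(n_2n_3,p)=1$ is used), and eliminates $a^2$ to obtain the quadratic $-2(m_2n_3-m_3n_2)c^2-4(m_2n_2+m_3n_3)c+2(m_2n_3-m_3n_2)\equiv 0\,(\mo p^t)$, which has $O(1)$ roots unless it vanishes identically. Your appeal to B\'ezout and to a root-counting result attributed to Hooley is therefore superfluous, and somewhat misplaced: the paper invokes \cite{H} only for $r=2$, and the cited theorem is an exponential-sum bound, not a statement about solutions of congruences. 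In fact your $\Z[i]$-picture shows that the non-degeneracy hypothesis on $m_2n_3-m_3n_2$, $m_2n_2+m_3n_3$ could in principle be relaxed: if $c_2=2(m_2-im_3)$ is a non-unit there are no unit solutions to $z^2\equiv -c_2/c_1$, while if $c_2$ is a unit Hensel gives at most two.
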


The explicit methods in this paper can in principle also be used to compute and estimate Kloosterman sums for other Weyl elements and other groups. The author hopes that this initiates further research in this area.

This paper is organized as follows. In Section \ref{sec:evaluating_sums}, we provide the tools that we will use to obtain explicit expressions for Kloosterman sums: a result of Stevens and Pl\"ucker coordinates. In Sections \ref{sec:so33_sums_section} and \ref{sec:so42_sums_section}, we study Kloostermans sums over $SO_{3,3}$ and $SO_{4,2}$, respectively.

\section*{Acknowledgements}
The author would like to thank Valentin Blomer for providing the topic as well as for the valuable guidance on this project, which started as part of a master's thesis at the University of Bonn. The author also expresses much gratitude to Alberto Acosta Reche for helpful suggestions. This work was supported by the Engineering and Physical Sciences Research Council [EP/S021590/1], via the EPSRC Centre for Doctoral Training in Geometry and Number Theory (The London School of Geometry and Number Theory), University College London.
\section{Evaluating Kloosterman sums}\label{sec:evaluating_sums}
In this section, we state some results which will aid us in obtaining an explicit expression for Kloosterman sums.

The first result allows us to restrict our attention to Kloosterman sums associated to only one representative in $N(\Q_p)$ per element in $W(\Q_p)$:
\begin{theorem}[{\cite[Theorem 3.2]{S}}]\label{prop:stevens} 
Let $G$ be a reductive group over $\Q_p$ with split maximal torus $T$. Let $n\in N(\Q_p)$ and consider complex characters $\psi,\psi':U(\Q_p)\rightarrow\C^\times$ that are trivial on $U(\Z_p)$. For $t\in T(\Z_p)$, define modified characters $\psi_t,\psi'_t:U(\Q_p)\rightarrow\C^\times$ by $\psi_t(u)= \psi(tut^{-1})$, $\psi'_t(u)= \psi'(tut^{-1})$. Then
\begin{align*}
\kl_p(\psi_t,\psi';n) = \kl_p(\psi,\psi';tn)\quad \text{ and } \quad
\kl_p(\psi,\psi'_t;n) = \kl_p(\psi,\psi';nt^{-1}).
\end{align*}
\end{theorem}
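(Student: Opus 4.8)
The plan is to deduce both identities from explicit bijections between Kloosterman sets, induced by left- or right-translation by $t$, after which one simply tracks how the factorization $x = u(x)\,n\,u'(x)$ transforms. The structural inputs are that $t\in T(\Q_p)$ normalizes $U$, $U^-$, and hence $U^{\overline{n}} = U\cap n^{-1}U^-n$, and that $t\in T(\Z_p)$ additionally normalizes $U(\Z_p)$ and $U^{\overline{n}}(\Z_p)$ and satisfies $tG(\Z_p) = G(\Z_p)$; the second batch of facts is where the hypothesis $t\in T(\Z_p)$, rather than merely $T(\Q_p)$, is genuinely used.

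For the first identity, one observes that $\overline{tn} = \overline{n}$ in $W(\Q_p)$, so $U^{\overline{tn}} = U^{\overline{n}}$, and that $U(\Q_p)t = tU(\Q_p)$, whence
$$U(\Q_p)\,tn\,U^{\overline{n}}(\Q_p) = t\bigl(U(\Q_p)\,n\,U^{\overline{n}}(\Q_p)\bigr).$$
Intersecting with $G(\Z_p) = tG(\Z_p)$, left-multiplication by $t$ gives a bijection from $U(\Q_p)nU^{\overline{n}}(\Q_p)\cap G(\Z_p)$ onto $U(\Q_p)tnU^{\overline{n}}(\Q_p)\cap G(\Z_p)$; since $tU(\Z_p) = U(\Z_p)t$ and the right $U^{\overline{n}}(\Z_p)$-cosets are unaffected, this descends to a bijection $\iota\colon X(n)\to X(tn)$, $\iota(x) = tx$, with inverse $y\mapsto t^{-1}y$. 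Writing $x = u(x)\,n\,u'(x)$ gives $tx = \bigl(tu(x)t^{-1}\bigr)(tn)\,u'(x)$ with $tu(x)t^{-1}\in U(\Q_p)$ and $u'(x)\in U^{\overline{tn}}(\Q_p)$, so uniqueness of the Bruhat factorization forces $u(\iota(x)) = tu(x)t^{-1}$ and $u'(\iota(x)) = u'(x)$ as cosets. Reindexing the defining sum along $\iota$ then yields
$$\kl_p(\psi,\psi';tn) = \sum_{x\in X(n)}\psi\bigl(tu(x)t^{-1}\bigr)\,\psi'(u'(x)) = \sum_{x\in X(n)}\psi_t(u(x))\,\psi'(u'(x)) = \kl_p(\psi_t,\psi';n).$$

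The second identity is the mirror image, via right-translation by $t^{-1}$. Here $U^{\overline{n}}(\Q_p)t^{-1} = t^{-1}U^{\overline{n}}(\Q_p)$ because $t$ normalizes $U^{\overline{n}}$ (one uses $t\,n^{-1}U^-n\,t^{-1} = n^{-1}(ntn^{-1})U^-(ntn^{-1})^{-1}n = n^{-1}U^-n$), giving $U(\Q_p)nU^{\overline{n}}(\Q_p)\,t^{-1} = U(\Q_p)\,nt^{-1}\,U^{\overline{n}}(\Q_p)$ and, after intersecting with $G(\Z_p)$ and noting $\overline{nt^{-1}} = \overline{n}$, a bijection $X(n)\to X(nt^{-1})$, $x\mapsto xt^{-1}$; now $xt^{-1} = u(x)(nt^{-1})\bigl(tu'(x)t^{-1}\bigr)$, so the $U^{\overline{n}}$-part picks up the conjugation by $t$ and one obtains $\kl_p(\psi,\psi';nt^{-1}) = \kl_p(\psi,\psi'_t;n)$. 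Along the way one records the routine points: $\psi_t$ and $\psi'_t$ are again trivial on $U(\Z_p)$, so the right-hand sides are legitimate Kloosterman sums; each summand depends only on the relevant $\Z_p$-coset since $\psi,\psi'$ are homomorphisms trivial on $U(\Z_p)$, resp.\ $U^{\overline{n}}(\Z_p)$; and $\iota$ is well-defined on double cosets, because if $g\mapsto vgw$ with $v\in U(\Z_p)$, $w\in U^{\overline{n}}(\Z_p)$, then $tg\mapsto (tvt^{-1})(tg)w$ lies in the same double coset.

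Since the whole argument is a bookkeeping exercise, the only real obstacle is the careful handling of the $\Z_p$- versus $\Q_p$-structures: one must verify that $T(\Z_p)$ normalizes $U(\Z_p)$ and $U^{\overline{n}}(\Z_p)$ (a standard property of the split model over $\Z_p$, which already fails for a general $t\in T(\Q_p)$, as the $GL_2$ element $\operatorname{diag}(p,1)$ shows), and that the translation maps respect the one-sided quotients in \eqref{eq:kloosterman_set} so that they descend to genuine bijections of Kloosterman sets.
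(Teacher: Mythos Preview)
The paper does not supply a proof of this theorem; it merely cites the result from Stevens \cite[Theorem 3.2]{S} and then records in Remark~\ref{rmk:stevens_non_split} the adjustment needed in the non-split case. Your argument is correct and is the standard one: the bijections $X(n)\to X(tn)$ and $X(n)\to X(nt^{-1})$ induced by one-sided translation, together with the resulting shift of the Bruhat factors under conjugation by $t$, immediately give both identities. The only substantive verifications---that $t\in T(\Z_p)$ normalizes $U(\Z_p)$ and $U^{\overline{n}}(\Z_p)$, and that $tG(\Z_p)=G(\Z_p)=G(\Z_p)t^{-1}$---are exactly the ones you flag, and they hold because the root subgroups are defined over $\Z_p$ and $T(\Z_p)$ acts on each via a character landing in $\Z_p^\times$.
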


\begin{remark}\label{rmk:stevens_non_split}
If $G$ is non-split reductive over $\Q_p$, then the previous proposition statement is still true under the modification that $t\in C(\Z_p)$, where $C= C_G(S)$ is the centralizer of a maximal $\Q_p$-split torus $S$ of $G$.
\end{remark}

A common strategy for studying Kloosterman sums is by Pl\"ucker coordinates, which provide an explicit parametrization of the Bruhat decomposition of $G(\Q_p)$. A certain set of relations and coprimality conditions among these coordinates guarantee the existence of an integral matrix in the corresponding orbit of $U(\Q_p)\backslash G(\Q_p)$. We state the results of Friedberg \cite{F}, which treat the case of $G=SL_n$, and later apply these to our orthogonal groups.

So let us consider $G= SL_n(\Q_p)$, $n\geq2$. Write $T$ for the maximal torus given by the subgroup of diagonal matrices, $U$ for the standard upper-triangular unipotent subgroup, $W$ for the Weyl group of which the elements are monomial matrices with entries in $\{\pm1\}$. For $1\leq k< n$, denote by $L_k$ the collection of subsets of $k$ elements of $\{1,\dotsc,n\}$. Write each such subset as $\lambda_{k,i}=(l_{k,i,1},l_{k,i,2},\dotsc,l_{k,i,k})$ (with $l_{k,i,1}<l_{k,i,2}<\dotsc<\dotsc<l_{k,i,k}$) for $1\leq i\leq {n \choose k}$, such that 
$$L_{k}=(\lambda_{k,1},\dotsc,\lambda_{k,{n\choose k}})$$
in lexicographic order. For a matrix $A\in G$, let $M_\lambda(A)$ be the minor formed by the $\lvert\lambda\rvert$ bottom rows and the columns indexed by $\lambda$ of $A$. Fixing a Weyl element $w\in W$, write $\omega\in S_n$ for the permutation associated to $w$; meaning, $we_{\omega(i)}=e_i$, so $w$ is nonzero precisely in the entries $(i,\omega(i))$ for $1\leq i\leq n$. Here $\{e_i\}_{1\leq i\leq n}$ denotes the standard basis for $\Q_p^n$. Write also $G_w=UTwU=UwTU$ for the corresponding Bruhat cell.

\begin{prop}[{\cite[Proposition 3.1]{F}}]\label{thm:friedberg1}
Assume that some matrix $A\in G_w$ has Bruhat decomposition $A=utwu'$. Let $\lambda_k=\left\{\omega(n),\omega(n-1),\dotsc,\omega(n-k+1)\right\}$, $1\leq k<n$. Then  $t$ is of the form $\diag(1/t_{n-1},t_{n-1}/t_{n-2},\dotsc,t_2/t_1,t_1)$, with $t_k = M_{\lambda_k}(A)$ up to multiplication by $-1$. Moreover,
$$G_w=\left\{A\in G\,:\, M_{\lambda_k}(A)\neq0,M_\lambda(A)=0\text{ for all } \lambda\in L_k,\lambda<\lambda_k,1\leq k<n\right\}.$$
\end{prop}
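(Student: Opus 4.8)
The plan is to unwind the Bruhat decomposition $A = utwu'$ into minors, exploiting the fact that left-multiplication by $U$ only does elementary row operations among the bottom rows in a way that stabilizes the relevant minors, and right-multiplication by $U$ only does column operations that vanish on the "leading" columns. First I would fix $k$ with $1 \le k < n$ and study the $k \times n$ matrix $B$ formed by the bottom $k$ rows of $A$. Write $A = u t w u'$ and observe that the bottom $k$ rows of $u$ form an upper-unitriangular block times the bottom $k$ rows of $tw u'$, so the span of the bottom $k$ rows of $A$ equals the span of the bottom $k$ rows of $twu'$; hence for computing which maximal $k \times k$ minors vanish, and the value of the minor $M_{\lambda_k}$ up to sign (a determinant of a change-of-basis on these rows, which for a unitriangular $u$ is exactly $\pm 1$), I may replace $A$ by $twu'$.

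Next I would analyze $twu'$. Since $w$ sends $e_{\omega(i)} \mapsto e_i$ and $t$ is diagonal, the bottom $k$ rows of $tw$ are supported exactly on the columns $\omega(n), \omega(n-1), \dots, \omega(n-k+1)$, i.e. on $\lambda_k$. Right-multiplication by the upper-unitriangular $u'$ replaces column $j$ by column $j$ plus a combination of earlier columns; I would check that this operation cannot create a nonzero entry of $B$ in any column to the left of the support, in the precise sense needed: for any $\lambda \in L_k$ with $\lambda < \lambda_k$ lexicographically, the corresponding $k \times k$ minor of $twu'$ vanishes, because the columns indexed by $\lambda$ still only involve columns $\le$ the largest index in $\lambda_k$ minus appropriate adjustments — here I would argue by a careful lexicographic/triangularity bookkeeping that the column space restricted to indices $\le \ell$ has dimension equal to $\#(\lambda_k \cap \{1,\dots,\ell\})$, so a $\lambda$ strictly smaller in lex order forces a linear dependence among the selected columns. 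Meanwhile $M_{\lambda_k}(twu')$ is, up to sign, the product of the $k$ relevant diagonal entries of $t$, which telescopes: with $t = \diag(1/t_{n-1}, t_{n-1}/t_{n-2}, \dots, t_2/t_1, t_1)$ the product of the bottom $k$ diagonal entries is $\pm t_k$. Combining with the previous paragraph gives $t_k = M_{\lambda_k}(A)$ up to sign and the vanishing statements $M_\lambda(A) = 0$ for $\lambda < \lambda_k$, $\lambda \in L_k$.

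For the displayed description of $G_w$ as a set, one inclusion is immediate from the above: every $A \in G_w$ satisfies the listed minor conditions. For the reverse inclusion I would take $A \in G$ satisfying $M_{\lambda_k}(A) \ne 0$ and $M_\lambda(A) = 0$ for all smaller $\lambda \in L_k$, all $k$, and show $A \in G_w$. The cleanest route is: $A$ lies in some Bruhat cell $G_{w'}$ by the Bruhat decomposition of $SL_n(\Q_p)$, so by the forward direction $A$ satisfies the minor conditions attached to $w'$; but the vanishing/non-vanishing pattern of the minors $M_\lambda$ (for each fixed $k$, which is the lex-smallest $\lambda$ with $M_\lambda(A) \ne 0$) determines $\lambda_k$, hence determines the sequence $(\omega(n), \dots, \omega(1))$ and thus $w'$, forcing $w' = w$. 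This reduces the converse to the uniqueness of the Bruhat cell containing $A$, which is part of the Bruhat decomposition I am allowed to assume.

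The main obstacle I anticipate is the lexicographic bookkeeping in the second paragraph: making precise why right-multiplication by the upper-unitriangular $u'$ does not "spill" a $k$-dimensional row space leftward past the prescribed columns, and pinning down the exact sign and the telescoping of the diagonal torus entries. This is essentially a statement that the flag spanned by the bottom rows of $twu'$ is the one determined by $w$, and the sign ambiguities come precisely from the permutation matrix $w$ having entries $\pm 1$ and from reordering columns when evaluating minors; I would handle it by induction on $k$, peeling off one row at a time, rather than attempting a closed-form determinant expansion.
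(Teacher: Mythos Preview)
The paper does not give its own proof of this proposition; it is quoted verbatim from Friedberg \cite[Proposition~3.1]{F} and used as a black box. So there is no in-paper argument to compare against. That said, your sketch is essentially the standard argument and is correct in outline.

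A couple of small clean-ups. First, the change-of-basis determinant coming from the upper-unitriangular $u$ on the bottom $k$ rows is exactly $+1$, not $\pm1$: the bottom-right $k\times k$ block of $u$ is itself unitriangular. All sign ambiguities in $t_k=\pm M_{\lambda_k}(A)$ arise from the $\pm1$ entries of $w$ together with the reordering of the columns $\omega(n),\dots,\omega(n-k+1)$ into increasing order when forming the minor. Second, your lexicographic vanishing argument can be stated without any induction: if $\lambda<\lambda_k$ with first discrepancy at position $j$ (so the first $j-1$ elements agree and $l_j<m_j$), then the $j$ columns $l_1,\dots,l_j$ of the bottom $k$ rows of $twu'$ all lie in the span of $\{\text{column }i\text{ of }tw:\ i\in\lambda_k,\ i<m_j\}$, a space of dimension $j-1$; hence the $k\times k$ minor $M_\lambda$ has linearly dependent columns and vanishes. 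Likewise, the columns of $twu'$ indexed by $\lambda_k$ differ from those of $tw$ by an upper-unitriangular column operation, so $M_{\lambda_k}(twu')=M_{\lambda_k}(tw)=\pm\prod_{i=n-k+1}^n t_{ii}=\pm t_k$ by the telescoping you describe.

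Your converse via uniqueness of the Bruhat cell is the clean way to finish: the pattern ``lex-smallest $\lambda\in L_k$ with $M_\lambda(A)\neq0$'' recovers $\lambda_k$ for every $k$, and since $\lambda_k\setminus\lambda_{k-1}=\{\omega(n-k+1)\}$ this determines $\omega$, hence the cell.
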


Since $\sum_{k=1}^{n-1}{n \choose k}=2^n-2$, we can index the coordinates of $\Q_p^{2^n-2}$ by $L_1\cup\dotsc\cup L_{n-1}$. Let 
$$V\subseteq \Q_p^{2^n-2}\simeq\wedge^1(\Q_p^n)\times \dotsb\wedge^{n-1}(\Q_p^n)$$
be the affine algebraic set with coordinates satisfying 
\begin{equation}\label{eq:plucker_image}
\begin{pmatrix}
    v_{\lambda_{k+1,1}} \\
    \vdots \\
    v_{\lambda_{k+1,{n \choose k+1}}}
\end{pmatrix}\in \im T_k
\end{equation}
for $1\leq k < n-1$, where $T_k=(t_{ij})$ denotes the linear transformation given by
$$t_{ij}=\begin{cases}
    (-1)^{r-1}v_{\lambda_{k+1,i}\backslash\{j\}} &\text{ if } j=l_{k+1,i,r}\in\lambda_{k+1,i} \\
    0 &\text{ otherwise}
\end{cases},\quad\quad 1\leq i\leq {n\choose k+1},1\leq j\leq n.$$
Note that this formula reflects the computation of the determinant of a matrix in terms of an iterative expansion of minors, the minors in each iteration being precisely those of one dimension less (than in the previous iteration) and only covering consecutive rows starting from the bottom of the matrix.

Consider the map
$$M:G\rightarrow \Q_p^{2^n-2}:A\mapsto M(A)=(M_\lambda(A)\,:\,\lambda\in L_1\cup\dotsb\cup L_{n-1}),$$
and call $M(A)$ the (generalized) Plücker coordinates of the matrix $A\in G$. This map is neither injective nor surjective, but turns out to be both after quotienting out the domain on the left by $U$ and restricting the codomain:

\begin{theorem}[{\cite[Theorem 3.3]{F}}]\label{thm:friedberg2}
The map $M$ induces a bijection from $U\backslash G$ to 
$$V_1=\left\{v\in V\,:\, v_\lambda\neq0\text{ for some }\lambda\in L_{n-1}\right\}.$$
\end{theorem}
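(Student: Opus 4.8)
The plan is to prove Theorem~\ref{thm:friedberg2} by direct multilinear algebra: encode a matrix by the tower of exterior products of its bottom rows, and reconstruct it (up to left multiplication by $U$) from that data. Write $a_1,\dots,a_n$ for the rows of $A\in G$ from top to bottom, and for $1\le k\le n-1$ set $\omega_k(A)=a_{n-k+1}\wedge\cdots\wedge a_n\in\wedge^k\Q_p^n$; by construction the entries $M_\lambda(A)$ with $\lambda\in L_k$ are precisely the coordinates of $\omega_k(A)$ in the standard monomial basis of $\wedge^k\Q_p^n$, so the data of $M(A)$ is the same as the data of $(\omega_1(A),\dots,\omega_{n-1}(A))$. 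First I would observe that $M$ factors through $U\backslash G$: if $u\in U$ then each $a_i$ is replaced by $a_i+\sum_{j>i}u_{ij}a_j$, so the block $(a_{n-k+1},\dots,a_n)$ is replaced by an upper-unipotent combination of itself, leaving every $\omega_k(A)$ — hence every $M_\lambda(A)$ — unchanged. I would then check the key bookkeeping point: $T_k$ is exactly the matrix, in the bases $\{e_j\}$ of $\Q_p^n$ and $\{e_\lambda\}_{\lambda\in L_{k+1}}$ of $\wedge^{k+1}\Q_p^n$, of the linear map $x\mapsto x\wedge\omega_k(A)$, this being nothing but the Laplace expansion of a $(k+1)$-minor along its top row, with the sign $(-1)^{r-1}$ and the lexicographic ordering of $L_{k+1}$ matching the identity $e_j\wedge e_{\lambda\setminus\{j\}}=(-1)^{r-1}e_\lambda$. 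Since $\omega_{k+1}(A)=a_{n-k}\wedge\omega_k(A)$, this realizes the $L_{k+1}$-coordinates of $M(A)$ as $T_k a_{n-k}\in\im T_k$, so $M(A)\in V$; and as $A$ is invertible, $a_2,\dots,a_n$ are independent, $\omega_{n-1}(A)\ne0$, and $M(A)\in V_1$.

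For injectivity I would suppose $M(A)=M(B)$, equivalently $\omega_k(A)=\omega_k(B)$ for all $1\le k\le n-1$, and reconstruct rows from the bottom. From $\omega_1$ one gets $a_n=b_n$. Inductively, assume $(b_{n-k+1},\dots,b_n)$ is obtained from $(a_{n-k+1},\dots,a_n)$ by an upper-unipotent transformation, so they span the same subspace $F_k$ and $\omega_k(B)=\omega_k(A)\ne 0$. Then $(a_{n-k}-b_{n-k})\wedge\omega_k(A)=\omega_{k+1}(A)-\omega_{k+1}(B)=0$, and since $\omega_k(A)$ is a nonzero decomposable $k$-vector, the standard fact that $v\wedge\omega=0$ forces $v$ into the span of $\omega$ yields $b_{n-k}\in a_{n-k}+F_k$. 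Hence $B=PA$ where $P$ has $P_{ii}=1$ and $P_{ij}=0$ for $j<i$ in rows $i\ge2$, with an a priori arbitrary first row; expanding $\det P=\det B/\det A=1$ along the first column — whose only possibly nonzero entry is $P_{11}$ — gives $P_{11}=1$, so $P$ is upper-triangular unipotent, i.e.\ $P\in U$. Thus $\overline M$ is injective.

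For surjectivity onto $V_1$ I would take $v\in V_1$ and let $\omega_k\in\wedge^k\Q_p^n$ be the vector with coordinates $(v_\lambda:\lambda\in L_k)$. Put $a_n:=\omega_1$. For $k=1,\dots,n-2$ the relation \eqref{eq:plucker_image} says $\omega_{k+1}\in\im T_k$, i.e.\ $\omega_{k+1}=x\wedge\omega_k$ for some $x\in\Q_p^n$ — using the Step~1 identification of $T_k$ with the map $x\mapsto x\wedge\omega_k$, valid inductively once $\omega_k=a_{n-k+1}\wedge\cdots\wedge a_n$ is known — and I set $a_{n-k}$ to be such an $x$. By induction $\omega_k=a_{n-k+1}\wedge\cdots\wedge a_n$ for all $k$, so each $\omega_k$ is automatically decomposable and the $a_i$ sit in a common partial flag. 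Since $v\in V_1$, $\omega_{n-1}=a_2\wedge\cdots\wedge a_n\ne0$, so $a_2,\dots,a_n$ are linearly independent; over the field $\Q_p$ one may complete to a basis and rescale to obtain $a_1$ with $a_1\wedge\cdots\wedge a_n=e_1\wedge\cdots\wedge e_n$, i.e.\ $\det=1$. The matrix $A$ with rows $a_1,\dots,a_n$ then lies in $SL_n(\Q_p)$, and $M_\lambda(A)$ equals the $\lambda$-coordinate of $\omega_{\abs{\lambda}}$, namely $v_\lambda$, for every $\lambda$; so $M(A)=v$. Combining with injectivity, $\overline M\colon U\backslash G\to V_1$ is a bijection.

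The argument is conceptually elementary — multilinear algebra over the field $\Q_p$ — and I expect no essential obstacle; the only places that demand genuine care are the coordinate identification in Step~1 of $T_k$ with the map $x\mapsto x\wedge\omega_k$ (matching the signs and the lexicographic indexing against the cofactor expansion) and the small determinant computation pinning down the first row in the injectivity step.
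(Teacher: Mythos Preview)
The paper does not give its own proof of this statement: Theorem~\ref{thm:friedberg2} is quoted verbatim from \cite[Theorem~3.3]{F} and used as a black box, so there is no in-paper argument to compare against. Your proof is correct and is essentially the natural multilinear-algebra argument one expects (and, in outline, the one Friedberg gives): the identification of $T_k$ with the matrix of $x\mapsto x\wedge\omega_k$ is exactly the cofactor expansion the paper alludes to just after defining $T_k$, and the rest is standard flag reconstruction. One small point you leave implicit in the surjectivity step is worth stating: from $\omega_{n-1}\neq0$ and the containment $\omega_{k+1}\in\im T_k$ it follows inductively downward that $\omega_k\neq0$ for every $k$ (since $T_k=0$ whenever $\omega_k=0$), which is what guarantees at each stage that $a_{n-k+1},\dots,a_n$ are independent and the construction can proceed. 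With that remark added, the argument is complete.
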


Write $\Gamma= SL_n(\Z_p)$, $\Gamma_\infty= U\cap \Gamma$. The following refinement of the previous theorem states under which conditions an element of $V_1$ represents an integral matrix:

\begin{theorem}[{\cite[Theorem 3.4]{F}}]\label{thm:friedberg_coprimality}
The map $M$ induces a bijection from $\Gamma_\infty\backslash \Gamma$ to 
$$V_2=\left\{v\in V\,:\, v_\lambda\in \Z_p\text{ for all } \lambda\in\cup_{i=1}^{n-1}L_k,\text{ and }\gcd(v_\lambda\,:\,\lambda\in L_k)=1 \text{ for all }1\leq k<n\right\}.$$
\end{theorem}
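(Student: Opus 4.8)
The plan is to deduce the statement from Theorem~\ref{thm:friedberg2}. First one checks that $V_2\subseteq V_1$ (the condition $\gcd(v_\lambda:\lambda\in L_{n-1})=1$ forces $v_\lambda\neq 0$ for some $\lambda\in L_{n-1}$) and that the evident map $\Gamma_\infty\backslash\Gamma\to U\backslash G$ is well defined and injective (if $U\gamma_1=U\gamma_2$ with $\gamma_i\in\Gamma$, then $\gamma_1\gamma_2^{-1}\in U\cap\Gamma=\Gamma_\infty$). Since $M$ already factors through $U\backslash G$ by Theorem~\ref{thm:friedberg2} — concretely, left multiplication by an upper unitriangular matrix transforms the bottom $k$ rows of a matrix by a $k\times k$ unipotent matrix, and hence fixes every minor $M_\lambda$ with $\lambda\in L_k$ — it suffices to prove that for $A\in G$ one has $UA\cap\Gamma\neq\emptyset$ if and only if $M(A)\in V_2$; combined with Theorem~\ref{thm:friedberg2} this produces the asserted bijection onto $V_2$.

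For the ``only if'' direction, suppose $A\in\Gamma=SL_n(\Z_p)$. Then all minors $M_\lambda(A)$ lie in $\Z_p$, and for each fixed $1\le k<n$ I would apply the Cauchy--Binet formula to the bottom $k$ rows and last $k$ columns of the identity $A\cdot A^{-1}=I_n$, whose corresponding $k\times k$ submatrix is $I_k$: this gives $1=\sum_{\lambda\in L_k}M_\lambda(A)N_\lambda$ with every $N_\lambda\in\Z_p$ (a minor of $A^{-1}\in SL_n(\Z_p)$), so $\gcd(M_\lambda(A):\lambda\in L_k)$ is a unit and $M(A)\in V_2$.

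For the ``if'' direction, given $v\in V_2$ I would take $A\in G$ with $M(A)=v$ (Theorem~\ref{thm:friedberg2}) and correct its rows from the bottom upward. The bottom row of $A$ has entries $(v_\lambda)_{\lambda\in L_1}$, which are integral with gcd a unit. Inductively, suppose some $u_k\in U$ has been found so that the bottom $k$ rows of $u_kA$ form an integral matrix $B$ whose $k\times k$ minors — which equal $(v_\lambda)_{\lambda\in L_k}$ — have gcd a unit; then the $\Z_p$-span $\Lambda$ of the rows of $B$ is a rank-$k$ direct summand of $\Z_p^n$. Write $w$ for row $n-k$ of $u_kA$ and $C$ for the $(k+1)\times n$ matrix with first row $w$ and remaining rows those of $B$. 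The $(k+1)\times(k+1)$ minors of $C$ lie in $\Z_p$: they are either the coordinates $(v_\mu)_{\mu\in L_{k+1}}$, or, when $k=n-1$, the single number $\pm\det A=\pm1$. I claim one can choose $c\in\Q_p^k$ with $w+cB\in\Z_p^n$, where $cB$ denotes the corresponding linear combination of the rows of $B$: choosing $P\in GL_n(\Z_p)$ so that $BP$ has block form $(B'\ 0)$ with $B'\in GL_k(\Z_p)$ (possible since $\Lambda$ is a direct summand), the minors of $CP$ on the column sets $\{1,\dots,k,\,k+j\}$ equal $\pm\det(B')\,(wP)_{k+j}$, so the last $n-k$ coordinates of $wP$ lie in $\Z_p$; then with $a\in\Q_p^k$ the vector of first $k$ coordinates of $wP$, the choice $c=-a(B')^{-1}$ makes $(w+cB)P$ have vanishing first $k$ coordinates and integral last $n-k$ coordinates, whence $w+cB\in\Z_p^n$. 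Replacing row $n-k$ of $u_kA$ by $w+cB$ amounts to left multiplication by a further element of $U$, and the bottom $k+1$ rows of the new matrix are integral with $(k+1)\times(k+1)$ minors $(v_\mu)_{\mu\in L_{k+1}}$, still of gcd a unit. Iterating up to the top row produces $u\in U$ with $uA\in M_n(\Z_p)$; since $\det(uA)=\det A=1$, we get $uA\in\Gamma$, so $UA$ meets $\Gamma$.

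The main obstacle is precisely the denominator-clearing claim in the ``if'' direction: one must argue that $p$-adic integrality of the Plücker coordinates, with no further hypothesis, is enough to correct each successive row. The decisive input is the standard fact that a submodule of $\Z_p^n$ is a direct summand exactly when the gcd of the maximal minors of a generating matrix is a unit (equivalently, elementary divisor theory over the DVR $\Z_p$), which is what lets one pass to the adapted coordinates given by $P$. The remaining bookkeeping — verifying that the successive corrections compose into a genuine element of $U$ and that the relevant minors are invariant at every stage — is routine but should be done carefully; note that the relations \eqref{eq:plucker_image} cutting out $V$, and Proposition~\ref{thm:friedberg1}, play no direct role here beyond guaranteeing, via Theorem~\ref{thm:friedberg2}, the existence of the starting matrix $A$.
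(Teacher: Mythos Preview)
The paper does not prove this theorem at all: it is quoted verbatim as \cite[Theorem~3.4]{F} and used as a black box, so there is no ``paper's own proof'' to compare against. Your argument is nonetheless correct and is essentially the standard one (and is, in outline, Friedberg's). The Cauchy--Binet step for the ``only if'' direction is clean, and your inductive row-correction for the ``if'' direction is the right idea; the key structural input --- that a $k\times n$ integral matrix whose maximal minors generate $\Z_p$ spans a direct summand of $\Z_p^n$ --- is exactly what makes the change of coordinates by $P\in GL_n(\Z_p)$ available. Two small remarks worth tightening: first, when you say the $(k+1)\times(k+1)$ minors of $CP$ lie in $\Z_p$, you are implicitly using Cauchy--Binet again (minors of $CP$ are $\Z_p$-linear combinations of minors of $C$), so it is worth saying so; second, the element of $U$ that replaces row $n-k$ of $u_kA$ by $w+cB$ is the unitriangular matrix adding multiples of rows strictly \emph{below} row $n-k$, which is indeed upper unitriangular, but you should also note that this new left factor does not disturb the bottom $k$ rows already made integral --- this is obvious, but it is the point that makes the induction go through.
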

\section{$SO_{3,3}$ Kloosterman sums}\label{sec:so33_sums_section}
Throughout this section, we consider the reductive group $G=SO_{3,3}(\Q_p)$ with split maximal torus 
$$T=\left\{\begin{pmatrix}
    t_1 &&&&& \\
    & t_2 &&&& \\
    && t_3 &&& \\
    &&& t_1^{-1} && \\
    &&&& t_2^{-1} & \\
    &&&&& t_3^{-1}
\end{pmatrix}\right\}_{t_1,t_2,t_3\in\Q_p^\times}.$$
The character group of $T$ is given by $X^*(T)\cong\Z\alpha_1\oplus\Z\alpha_2\oplus\Z\alpha_3$, where 
$$\alpha_i:T\rightarrow\G_m:\diag(t_1,t_2,t_3,t_1^{-1},t_2^{-1},t_3^{-1})\mapsto t_i,$$
resulting in a root system $\Phi(G,T)=\left\{\pm\alpha_1\pm\alpha_2,\pm\alpha_1\pm\alpha_3,\pm\alpha_2\pm\alpha_3\right\}$. We choose the base $\Delta=\left\{\alpha_1-\alpha_2,\alpha_2+\alpha_3,\alpha_2-\alpha_3\right\}$, which induces the system of positive roots $\Phi^+=\{\alpha_1\pm\alpha_2,\alpha_1\pm\alpha_3,\alpha_2\pm\alpha_3\}$ and the unipotent subgroup
$$U=\left\{\begin{pmatrix}
    1 & x & y & -xz-yt & z & t \\
    & 1 & u & -z-yv-tu+uvx & -uv & v \\
    && 1 & -t+xv & -v & \\
    &&& 1 && \\
    &&& -x & 1 & \\
    &&& xu-y & -u & 1
\end{pmatrix}\right\}_{x,y,z,t,u,v\in\Q_p}.$$
The Weyl group is given by
\begin{align}\label{eq:so33_weyl}
W= \{&\id, \\
&s_\alpha,s_\beta,s_\gamma,\\
&s_\alpha s_\beta,s_\alpha s_\gamma,s_\beta s_\alpha,s_\beta s_\gamma,s_\gamma s_\beta, \\
&s_\alpha s_\beta s_\gamma,s_\alpha s_\gamma s_\beta,s_\beta s_\alpha s_\gamma, s_\gamma s_\beta s_\alpha, \\
&s_\alpha s_\beta s_\gamma s_\alpha,s_\alpha s_\gamma s_\beta s_\alpha,s_\alpha s_\beta s_\gamma s_\beta,s_\beta s_\alpha s_\gamma s_\beta,s_\gamma s_\beta s_\alpha s_\gamma, \\
&s_\alpha s_\beta s_\gamma s_\alpha s_\beta, s_\alpha s_\beta s_\gamma s_\alpha s_\gamma,s_\alpha s_\beta s_\gamma s_\beta s_\alpha,s_\alpha s_\gamma s_\alpha s_\beta s_\gamma,s_\beta s_\gamma s_\beta s_\alpha s_\beta, \\
&s_\alpha s_\beta s_\alpha s_\gamma s_\beta s_\alpha \,\},
\end{align}
where $\alpha=\alpha_2-\alpha_3$, $\beta=\alpha_1-\alpha_2$, $\gamma= \alpha_2+\alpha_3$, and
$$s_\alpha=\begin{psmallmatrix}
    1 &&&&& \\
    && 1 &&& \\
    & 1 &&&& \\
    &&& 1 && \\
    &&&&& 1 \\
    &&&& 1 &
\end{psmallmatrix},\quad s_\beta=\begin{psmallmatrix}
    & 1 &&&& \\
    1 &&&&& \\
    && 1 &&& \\
    &&&& 1 & \\
    &&& 1 && \\
    &&&&& 1
\end{psmallmatrix},\quad s_\gamma=\begin{psmallmatrix}
    1 &&&&& \\
    &&&&& 1 \\
    &&&& 1 & \\
    &&& 1 && \\
    && 1 &&& \\
    & 1 &&&&
\end{psmallmatrix}.$$

In Section \ref{sec:so33_plucker}, we parametrize the representatives of the Kloosterman set $X(n)$ associated to each Weyl element $\overline{n}\in W$ of length at most $2$, using Plücker coordinates. Based on this information, we construct in Section \ref{sec:so33_kloosterman} the Kloosterman sums associated to these $n$. In Section \ref{sec:so33_bounds}, we discuss non-trivial bounds for the resulting sums.

\subsection{Plücker coordinates}\label{sec:so33_plucker}
For a general matrix $A=(a_{ij})_{1\leq i,j\leq 6}\in G$, we define its \textit{Plücker coordinates} as 
$$v_1=a_{41},\quad v_2=a_{42},\quad v_3=a_{43},\quad v_4=a_{44},\quad v_5=a_{45},\quad v_6=a_{46},$$
$$(v_{ij})_{1\leq i<j\leq 6},\quad(v_{ijk})_{1\leq i<j<k\leq 6},\quad(v_{ijkl})_{1\leq i<j<k<l\leq 6},\quad(v_{ijklm})_{1\leq i<j<k<l<m\leq 6}.$$
Here the last three symbols of the form $v_\lambda$ (for an ordered subset $\lambda$ of $\{1,\dotsc,6\}$) denote the minors of $A$ formed by the $|\lambda|$ bottom rows and the columns indexed by $\lambda$. The $v_{ij}$'s denote the minors of $A$ formed by the fourth and fifth row and the columns $(i,j)$, for $1\leq i<j\leq6$. For instance, $v_{12}=a_{41}a_{52}-a_{42}a_{51}$. This choice was made to ensure that all Plücker coordinates are invariant under left multiplication by $U$. Note that there are $62$ Plücker coordinates in total.

Given the large number of Plücker coordinates, we will not write down a complete set of Plücker relations characterizing $U\backslash G$. For our purposes, it suffices to explicitly compute the Plücker coordinates occurring in a given Bruhat cell $G_w$ of $G$. We will rely on the following result:    
\begin{prop}\label{prop:so33_inj}
The Plücker coordinates defined above induce an injection from $U\backslash G$ to $\Q_p^{62}$. 
\end{prop}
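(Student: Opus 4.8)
The plan is to reduce the statement for $G = SO_{3,3}(\Q_p)$ to Friedberg's theorem (Theorem~\ref{thm:friedberg2}) for $SL_6(\Q_p)$, via the closed immersion $SO_{3,3} \hookrightarrow SL_6$. First I would observe that the $62$ Plücker coordinates $v_\lambda$ defined above are, up to sign and reindexing, exactly the restrictions to $SO_{3,3}$ of the generalized Plücker coordinates $M_\lambda(A)$ of Friedberg (with $n = 6$), except that here one only keeps the ``bottom half'' minors rather than all minors of all sizes — but this is enough because every $M_\lambda(A)$ with $|\lambda| = k$ for Friedberg is the minor on the bottom $k$ rows, which is precisely what the symbols $v_\lambda$, $v_{ij}$ record (with the convention adjustment that $v_1,\dots,v_6$ are entries of the fourth row, i.e.\ the bottom row of the top block, matching the $|\lambda|=1$ case after the symmetry of $M$). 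The key point from Theorem~\ref{thm:friedberg2} is that $M$ induces a \emph{bijection} $U_{SL_6}\backslash SL_6 \to V_1$; in particular $M$ is injective on $U_{SL_6}$-orbits.

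Next I would argue that for $A, B \in SO_{3,3}(\Q_p)$ lying in the same fibre of the Plücker map defined here — i.e.\ $v_\lambda(A) = v_\lambda(B)$ for all $62$ coordinates — we have $A, B$ in the same $U_{SL_6}(\Q_p)$-orbit, hence $B = uA$ for some $u \in U_{SL_6}(\Q_p)$. This requires knowing that agreement of the ``bottom-half'' minors forces agreement of \emph{all} minors $M_\lambda$ in Friedberg's sense. One way: the full list of Friedberg minors is determined by the bottom $k$ rows for each $k$, and for $A \in SL_6$ the rows are not independent data — but more cleanly, one can invoke that $U\backslash SL_n$ is already parametrized by the $M_\lambda$ with $\lambda$ ranging over $L_1 \cup \dots \cup L_{n-1}$, and the coordinates $v_\lambda$ I listed cover exactly $L_1$ (the fourth row), $L_2$ (via the $v_{ij}$, fourth–fifth rows), $L_3, L_4, L_5$ (the remaining symbols). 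So the $62$ coordinates \emph{are} the full Friedberg coordinate tuple, just renamed. Then $A \mapsto (v_\lambda(A))$ equals Friedberg's $M$ restricted to $SO_{3,3}$, so it is injective on $U_{SL_6}$-orbits, a fortiori on $U_{SO_{3,3}}$-orbits since $U_{SO_{3,3}} = U \subseteq U_{SL_6}$.

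Finally I would close the gap between $U_{SL_6}$-orbits and $U_{SO_{3,3}}$-orbits: I must check that if $A, B \in SO_{3,3}(\Q_p)$ and $B = uA$ with $u \in U_{SL_6}(\Q_p)$, then in fact $u \in U = U_{SO_{3,3}}(\Q_p)$, so that $A$ and $B$ already define the same element of $U\backslash G$. This follows because $u = BA^{-1} \in SO_{3,3}(\Q_p)$ (as $A, B$ are) and $u$ is upper-triangular unipotent, and $SO_{3,3} \cap U_{SL_6} = U$ by construction of $U$ as the unipotent radical of the Borel of $SO_{3,3}$ compatible with the chosen base $\Delta$. I expect this last intersection identity to be the main thing to pin down carefully: one should verify that an upper-triangular unipotent matrix preserving the form $M$ lies in the $6$-parameter group $U$ written above — this is a direct but slightly tedious linear-algebra check (equivalently, the Lie algebra of $SO_{3,3}\cap U_{SL_6}$ is spanned by the positive root vectors, which is standard). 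Modulo that verification, injectivity of the Plücker map on $U\backslash G$ is immediate from Friedberg's bijectivity result.
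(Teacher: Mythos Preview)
Your overall strategy --- reduce to Friedberg's Theorem~\ref{thm:friedberg2} via an intersection statement of the form $SO_{3,3}\cap(\text{some unipotent subgroup of }SL_6)=U$ --- is exactly what the paper does. But the execution has a concrete gap: the inclusion $U\subseteq U_{SL_6}$ you invoke is false. Look at the explicit matrix form of $U$ displayed at the start of Section~\ref{sec:so33_sums_section}: a generic element has entries $-x$, $xu-y$, $-u$ in positions $(5,4)$, $(6,4)$, $(6,5)$, so $U$ is \emph{not} upper triangular. In fact $SO_{3,3}\cap U_{SL_6}$ is only the $3$-dimensional group $\left\{\begin{psmallmatrix} I_3 & B \\ 0 & I_3 \end{psmallmatrix}: B^t=-B\right\}$, a proper subgroup of the $6$-dimensional $U$. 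This is precisely why the paper's Pl\"ucker coordinates for $|\lambda|=1,2$ use row $4$ and rows $\{4,5\}$ rather than the bottom row $6$ and rows $\{5,6\}$: the bottom rows are \emph{not} invariant under left multiplication by $U$, so the $62$ coordinates here are genuinely different from Friedberg's $M_\lambda$, not a mere renaming. Your hand-wave ``after the symmetry of $M$'' does not repair this --- rows $4$ and $6$ of the representative matrices listed in the enumeration are visibly different.

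The paper's fix is a single conjugation. Let $v$ be the permutation matrix for the transposition $(4\;6)$. One checks directly that $v^{-1}Uv\subseteq U_{SL_6}$ (the stray subdiagonal entries move above the diagonal), and in fact that $SO_{3,3}\cap vU_{SL_6}v^{-1}=U$; this is the ``explicit computation'' the paper refers to. Under this conjugation the paper's Pl\"ucker coordinates of $A$ agree, up to a fixed sign and the column permutation $(4\;6)$, with Friedberg's coordinates of $vAv$. Then your argument runs cleanly: if $A,B\in SO_{3,3}$ have the same Pl\"ucker coordinates, then $vAv$ and $vBv$ have the same Friedberg coordinates, so $vBv=\gamma\cdot vAv$ for some $\gamma\in U_{SL_6}$, whence $B=(v\gamma v^{-1})A$ with $v\gamma v^{-1}\in SO_{3,3}\cap vU_{SL_6}v^{-1}=U$.
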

\vspace{-8mm}
\begin{proof}
Let $\gamma$ be a standard upper-triangular unipotent matrix in $SL_6(\Q_p)$ and consider the transformation matrix 
$$v=
\begin{psmallmatrix}
    1 &&&&& \\
    & 1 &&&& \\
    && 1 &&& \\
    &&&&& 1 \\
    &&&& 1 & \\
    &&& 1 &&
\end{psmallmatrix}.$$
By explicit computation, one verifies that $v\gamma v^{-1}\in SO_{3,3}(\Q_p)$ if and only if $v\gamma v^{-1}\in U$. Injectivity of $U\backslash G\rightarrow \Q_p^{62}$ is then implied by Theorem \ref{thm:friedberg2}.
\end{proof}

We have a Bruhat decomposition 
$$G=\sqcup_{w\in W}G_w,\quad G_w= UwTU^w,$$
which consists of twenty-four cells $G_w$, indexed by the elements of $W$. An orbit in $U\backslash G$ is represented by a matrix in $wTU^w$ for some $w\in W$. For a matrix in $wTU^w$, with $w$ of length at most 2, we check (by explicit computation) all possible configurations for its corresponding Plücker coordinates $v=(v_1,\dotsc,v_{23456})\in\Q_p^{62}$:

\begin{enumerate}
\item $\begin{aligned}[t]
w=\id:\quad v\in\{(&0,0,0,v_4,0,0; \\
&0,0,0,0,0,0,0,0,0,0,0,0,v_{45},0,0; \\
&0,0,0,0,0,0,0,0,0,0,0,0,0,0,0,0,0,0,0,v_{456}; \\
&0,0,0,0,0,0,0,0,0,0,0,0,0,0,v_{45}; \\
&0,0,0,0,0,v_4)\,: \,v_4,v_{45},v_{456}\in\Q_p^\times\}
\end{aligned}$

\noindent Per set of coordinates, a representative matrix in $wTU^w$ is given by 
$$\begin{psmallmatrix}
    \frac{1}{v_4} &&&&& \\
    & \frac{v_4}{v_{45}} &&&& \\
    && \frac{v_{45}}{v_{456}} &&& \\
    &&& v_4 && \\
    &&&& \frac{v_{45}}{v_4} & \\
    &&&&& \frac{v_{456}}{v_{45}}
\end{psmallmatrix}=\begin{psmallmatrix}
    1 &&&&& \\
    & 1 &&&& \\
    && 1 &&& \\
    &&& 1 && \\
    &&&& 1 & \\
    &&&&& 1
\end{psmallmatrix}\begin{psmallmatrix}
    \frac{1}{v_4} &&&&& \\
    & \frac{v_4}{v_{45}} &&&& \\
    && \frac{v_{45}}{v_{456}} &&& \\
    &&& v_4 && \\
    &&&& \frac{v_{45}}{v_4} & \\
    &&&&& \frac{v_{456}}{v_{45}}
\end{psmallmatrix}.$$

\item $\begin{aligned}[t]
w=s_\alpha:\quad v\in\{(&0,0,0,v_4,0,0; \\
&0,0,0,0,0,0,0,0,0,0,0,0,v_{45},v_{46},0; \\
&0,0,0,0,0,0,0,0,0,0,0,0,0,0,0,0,0,0,0,v_{456}; \\
&0,0,0,0,0,0,0,0,0,0,0,0,0,-v_{46},v_{45}; \\
&0,0,0,0,0,v_4)\,:\,v_4,v_{46},v_{456}\in\Q_p^\times,\,v_{45}\in\Q_p\}
\end{aligned}$

\noindent Per set of coordinates, a representative matrix in $wTU^w$ is given by
$$\begin{psmallmatrix}
    \frac{1}{v_4} &&&&& \\
    && \frac{v_4}{v_{46}} &&& \\
    & -\frac{v_{46}}{v_{456}} & \frac{v_{45}}{v_{456}} &&& \\
    &&& v_4 && \\
    &&&& \frac{v_{45}}{v_4} & \frac{v_{46}}{v_4} \\
    &&&& -\frac{v_{456}}{v_{46}} 
\end{psmallmatrix}=\begin{psmallmatrix}
    1 &&&&& \\
    && 1 &&& \\
    & 1 &&&& \\
    &&& 1 && \\
    &&&&& 1 \\
    &&&& 1 &
\end{psmallmatrix}\begin{psmallmatrix}
    \frac{1}{v_4} &&&&& \\
    & -\frac{v_{46}}{v_{456}} & \frac{v_{45}}{v_{456}} &&& \\
    && \frac{v_4}{v_{46}} &&& \\
    &&& v_4 && \\
    &&&& -\frac{v_{456}}{v_{46}} & \\
    &&&& \frac{v_{45}}{v_4} & \frac{v_{46}}{v_4}
\end{psmallmatrix}.$$

\item $\begin{aligned}[t]
w=s_\beta:\quad v\in\{(&0,0,0,v_4,v_5,0; \\
&0,0,0,0,0,0,0,0,0,0,0,0,v_{45},0,0; \\
&0,0,0,0,0,0,0,0,0,0,0,0,0,0,0,0,0,0,0,v_{456}; \\
&0,0,0,0,0,0,0,0,0,0,0,0,0,0,v_{45}; \\
&0,0,0,0,-v_5,v_4)\,:\,v_5,v_{45},v_{456}\in\Q_p^\times,\,v_4\in\Q_p\}
\end{aligned}$

\noindent Per set of coordinates, a representative matrix in $wTU^w$ is given by
$$\begin{psmallmatrix}
    & \frac{1}{v_5} &&&& \\
    -\frac{v_5}{v_{45}} & \frac{v_4}{v_{45}} &&&& \\
    && \frac{v_{45}}{v_{456}} &&& \\
    &&& v_4 & v_5 && \\
    &&& -\frac{v_{45}}{v_5} && \\
    &&&&& \frac{v_{456}}{v_{45}}
\end{psmallmatrix}=\begin{psmallmatrix}
    & 1 &&&& \\
    1 &&&&& \\
    && 1 &&& \\
    &&&& 1 & \\
    &&& 1 && \\
    &&&&& 1
\end{psmallmatrix}\begin{psmallmatrix}
    -\frac{v_5}{v_{45}} & \frac{v_4}{v_{45}} &&&& \\
    & \frac{1}{v_5} &&&& \\
    && \frac{v_{45}}{v_{456}} &&& \\
    &&& -\frac{v_{45}}{v_5} && \\
    &&& v_4 & v_5 & \\
    &&&&& \frac{v_{456}}{v_{45}}
\end{psmallmatrix}.$$

\item $\begin{aligned}[t]
w=s_\gamma:\quad v\in\{(&0,0,0,v_4,0,0; \\
&0,0,0,0,0,0,0,0,0,v_{34},0,0,v_{45},0,0; \\
&0,0,0,0,0,0,0,0,0,0,v_{234},0,0,\frac{v_{45}v_{234}}{v_{34}},0,0,0,\frac{v_{45}v_{234}}{v_{34}},0,\frac{v_{45}^2v_{234}}{v_{34}^2}; \\
&0,0,0,0,0,0,0,0,0,0,-v_{34},0,0,0,v_{45}; \\
&0,0,0,0,0,v_4)\,:\, v_4,v_{34},v_{234}\in\Q_p^\times,\, v_{45}\in\Q_p\}
\end{aligned}$

\noindent Per set of coordinates, a representative matrix in $wTU^w$ is given by
$$\begin{psmallmatrix}
    \frac{1}{v_4} &&&&& \\
    &&&&& -\frac{v_4}{v_{34}} \\
    &&&& \frac{v_{34}}{v_{234}} & \\
    &&& v_4 && \\
    && -\frac{v_{34}}{v_4} && \frac{v_{45}}{v_4} & \\
    & \frac{v_{234}}{v_{34}} &&&& \frac{v_{45}v_{234}}{v_{34}^2}
\end{psmallmatrix}=\begin{psmallmatrix}
    1 &&&&& \\
    &&&&& 1 \\
    &&&& 1 & \\
    &&& 1 && \\
    && 1 &&& \\
    & 1 &&&&
\end{psmallmatrix}\begin{psmallmatrix}
    \frac{1}{v_4} &&&&& \\
    & \frac{v_{234}}{v_{34}} &&&& \frac{v_{45}v_{234}}{v_{34}^2} \\
    && -\frac{v_{34}}{v_4} && \frac{v_{45}}{v_4} & \\
    &&& v_4 && \\
    &&&& \frac{v_{34}}{v_{234}} & \\
    &&&&& -\frac{v_4}{v_{34}}
\end{psmallmatrix}.$$

\item $\begin{aligned}[t]w=s_\alpha s_\beta:\quad
v\in\{(&0,0,0,v_4,v_5,0; \\
&0,0,0,0,0,0,0,0,0,0,0,0,v_{45},\frac{v_4v_{56}}{v_5},v_{56}; \\
&0,0,0,0,0,0,0,0,0,0,0,0,0,0,0,0,0,0,0,v_{456}; \\
&0,0,0,0,0,0,0,0,0,v_{56},0,0,0,-\frac{v_4v_{56}}{v_5},v_{45}; \\
&0,0,0,0,-v_5,v_4)\,:\,v_5,v_{56},v_{456}\in\Q_p^\times, v_4,v_{45}\in\Q_p\}\end{aligned}$

\noindent Per set of coordinates, a representative matrix in $wTU^w$ is given by
$$\begin{psmallmatrix}
    & \frac{1}{v_5} &&&& \\
    && \frac{v_5}{v_{56}} &&& \\
    \frac{v_{56}}{v_{456}} & -\frac{v_4v_{56}}{v_5v_{456}} & \frac{v_{45}}{v_{456}} &&& \\
    &&& v_4 & v_5 & \\
    &&& -\frac{v_{45}}{v_5} && \frac{v_{56}}{v_5} \\
    &&& \frac{v_{456}}{v_{56}} && 
\end{psmallmatrix}=\begin{psmallmatrix}
    & 1 &&&& \\
    && 1 &&& \\
    1 &&&&& \\
    &&&& 1 & \\
    &&&&& 1 \\
    &&& 1 &&
\end{psmallmatrix}\begin{psmallmatrix}
    \frac{v_{56}}{v_{456}} & -\frac{v_4v_{56}}{v_5v_{456}} & \frac{v_{45}}{v_{456}} &&& \\
    & \frac{1}{v_5} &&&& \\
    && \frac{v_5}{v_{56}} &&& \\
    &&& \frac{v_{456}}{v_{56}} && \\
    &&& v_4 & v_5 & \\
    &&& -\frac{v_{45}}{v_5} && \frac{v_{56}}{v_5}
\end{psmallmatrix}.$$

\item $\begin{aligned}[t]w=s_\alpha s_\gamma: \quad
v\in\{(&0,0,0,v_4,0,0; \\
&0,0,0,0,0,0,v_{24},0,0,v_{34},0,0,-\frac{v_{34}v_{46}}{v_{24}},v_{46},0; \\
&0,0,0,0,0,0,0,0,0,0,v_{234},0,0,-\frac{v_{46}v_{234}}{v_{24}},0,0,0,-\frac{v_{46}v_{234}}{v_{24}},0,\frac{v_{46}^2v_{234}}{v_{24}^2}; \\
&0,0,0,0,0,0,0,0,0,0,-v_{34},v_{24},0,-v_{46},-\frac{v_{34}v_{46}}{v_{24}}; \\
&0,0,0,0,0,v_4)\,:\,v_4,v_{24},v_{234}\in\Q_p^\times, v_{34},v_{46}\in\Q_p\}
\end{aligned}$

\noindent Per set of coordinates, a representative matrix in $wTU^w$ is given by 
$$\begin{psmallmatrix}
    \frac{1}{v_4} &&&&& \\
    &&&& -\frac{v_4}{v_{24}} & \\
    &&&& \frac{v_{34}}{v_{234}} & -\frac{v_{24}}{v_{234}} \\
    &&& v_4 && \\
    & -\frac{v_{24}}{v_4} & -\frac{v_{34}}{v_4} && -\frac{v_{34}v_{46}}{v_4v_{24}} & \frac{v_{46}}{v_4} \\
    && -\frac{v_{234}}{v_{24}} && -\frac{v_{46}v_{234}}{v_{24}^2} &
\end{psmallmatrix}=\begin{psmallmatrix}
    1 &&&&& \\
    &&&& 1 & \\
    &&&&& 1 \\
    &&& 1 && \\
    & 1 &&&& \\
    && 1 &&&
\end{psmallmatrix}\begin{psmallmatrix}
    \frac{1}{v_4} &&&&& \\
    & -\frac{v_{24}}{v_4} & -\frac{v_{34}}{v_4} && -\frac{v_{34}v_{46}}{v_4v_{24}} & \frac{v_{46}}{v_4} \\
    && -\frac{v_{234}}{v_{24}} && -\frac{v_{46}v_{234}}{v_{24}^2} & \\
    &&& v_4 && \\
    &&&& -\frac{v_4}{v_{24}} & \\
    &&&& \frac{v_{34}}{v_{234}} & -\frac{v_{24}}{v_{234}}
\end{psmallmatrix}.$$

\item $\begin{aligned}[t]w=s_\beta s_\alpha:\quad
v\in\{(&0,0,0,v_4,v_5,v_6; \\
&0,0,0,0,0,0,0,0,0,0,0,0,\frac{v_5v_{46}}{v_6},v_{46},0; \\
&0,0,0,0,0,0,0,0,0,0,0,0,0,0,0,0,0,0,0,v_{456}; \\
&0,0,0,0,0,0,0,0,0,0,0,0,0,-v_{46},\frac{v_5v_{46}}{v_6}; \\
&0,0,0,v_6,-v_5,v_4)\,:\,v_6,v_{46},v_{456}\in\Q_p^\times,v_4,v_5\in\Q_p\}\end{aligned}$

\noindent Per set of coordinates, a representative matrix in $wTU^w$ is given by
$$\begin{psmallmatrix}
    && \frac{1}{v_6} &&& \\
    -\frac{v_6}{v_{46}} && \frac{v_4}{v_{46}} &&& \\
    & -\frac{v_{46}}{v_{456}} & \frac{v_5v_{46}}{v_6v_{456}} &&& \\
    &&& v_4 & v_5 & v_6 \\
    &&& -\frac{v_{46}}{v_6} && \\
    &&&& -\frac{v_{456}}{v_{46}} &
\end{psmallmatrix}=\begin{psmallmatrix}
    && 1 &&& \\
    1 &&&&& \\
    & 1 &&&& \\
    &&&&& 1 \\
    &&& 1 && \\
    &&&& 1 &
\end{psmallmatrix}\begin{psmallmatrix}
    -\frac{v_6}{v_{46}} && \frac{v_4}{v_{46}} &&& \\
    & -\frac{v_{46}}{v_{456}} & \frac{v_5v_{46}}{v_6v_{456}} &&& \\
    && \frac{1}{v_6} &&& \\
    &&& -\frac{v_{46}}{v_6} && \\
    &&&& -\frac{v_{456}}{v_{46}} & \\
    &&& v_4 & v_5 & v_6
\end{psmallmatrix}.$$

\item $\begin{aligned}[t]w=s_\beta s_\gamma: \quad
v\in\{(&0,0,v_3,v_4,v_5,0; \\
&0,0,0,0,0,0,0,0,0,v_{34},0,0,-\frac{v_5v_{34}}{v_3},0,0; \\
&0,0,0,0,0,0,0,0,0,0,v_{234},0,0,-\frac{v_5v_{234}}{v_3},0,0,0,-\frac{v_5v_{234}}{v_3},0,\frac{v_5^2v_{234}}{v_3^2}; \\
&0,0,0,0,0,0,0,0,0,0,-v_{34},0,0,0,-\frac{v_5v_{34}}{v_3}; \\
&-v_3,0,0,0,-v_5,v_4)\,:\,v_3,v_{34},v_{234}\in\Q_p^\times,v_4,v_5\in\Q_p\} \end{aligned}$

\noindent Per set of coordinates, a representative matrix in $wTU^w$ is given by
$$\begin{psmallmatrix}
    &&&&& \frac{1}{v_3} \\
    \frac{v_3}{v_{34}} &&&&& -\frac{v_4}{v_{34}} \\
    &&&& \frac{v_{34}}{v_{234}} & \\
    && v_3 & v_4 & v_5 & \\
    &&& \frac{v_{34}}{v_3} && \\
    & \frac{v_{234}}{v_{34}} &&&& -\frac{v_5v_{234}}{v_3v_{34}}
\end{psmallmatrix}=\begin{psmallmatrix}
    &&&&& 1 \\
    1 &&&&& \\
    &&&& 1 & \\
    && 1 &&& \\
    &&& 1 && \\
    & 1 &&&&
\end{psmallmatrix}\begin{psmallmatrix}
    \frac{v_3}{v_{34}} &&&&& -\frac{v_4}{v_{34}} \\
    & \frac{v_{234}}{v_{34}} &&&& -\frac{v_5v_{234}}{v_3v_{34}} \\
    && v_3 & v_4 & v_5 & \\
    &&& \frac{v_{34}}{v_3} && \\
    &&&& \frac{v_{34}}{v_{234}} & \\
    &&&&& \frac{1}{v_3}
\end{psmallmatrix}.$$

\item $\begin{aligned}[t]w=s_\gamma s_\beta: \quad
v\in\{(&0,0,0,v_4,v_5,0; \\
&0,0,0,0,0,0,0,0,0,\frac{v_4v_{35}}{v_5},v_{35},0,v_{45},0,0; \\
&0,0,0,0,\frac{v_4v_{135}}{v_5},v_{135},0,\frac{v_{45}v_{135}}{v_{35}},0,0,-\frac{v_4^2v_{135}}{v_5^2},-\frac{v_4v_{135}}{v_5},0, \\
&-\frac{v_4v_{45}v_{135}}{v_5v_{35}},0,0,0,-\frac{v_4v_{45}v_{135}}{v_5v_{35}},-\frac{v_{45}v_{135}}{v_{35}}, -\frac{v_{45}^2v_{135}}{v_{35}^2}; \\
&0,0,0,0,0,0,v_{35},0,0,0,-\frac{v_4v_{35}}{v_5},0,0,0,v_{45}; \\
&0,0,0,0,-v_5,v_4)\,:\,v_5,v_{35},v_{135}\in\Q_p^\times,v_4,v_{45}\in\Q_p\}\end{aligned}$

\noindent Per set of coordinates, a representative matrix in $wTU^w$ is given by 
$$\begin{psmallmatrix}
    & \frac{1}{v_5} &&&& \\
    &&&&& -\frac{v_5}{v_{35}} \\
    &&& \frac{v_{35}}{v_{135}} && \\
    &&& v_4 & v_5 & \\
    && -\frac{v_{35}}{v_5} & -\frac{v_{45}}{v_5} && \\
    \frac{v_{135}}{v_{35}} & -\frac{v_4v_{135}}{v_5v_{35}} &&&& -\frac{v_{45}v_{135}}{v_{35}^2}
\end{psmallmatrix}=\begin{psmallmatrix}
    & 1 &&&& \\
    &&&&& 1 \\
    &&& 1 && \\
    &&&& 1 & \\
    && 1 &&& \\
    1 &&&&&
\end{psmallmatrix}\begin{psmallmatrix}
    \frac{v_{135}}{v_{35}} & -\frac{v_4v_{135}}{v_5v_{35}} &&&& -\frac{v_{45}v_{135}}{v_{35}^2} \\
    & \frac{1}{v_5} &&&& \\
    && -\frac{v_{35}}{v_5} & -\frac{v_{45}}{v_5} && \\
    &&& \frac{v_{35}}{v_{135}} && \\
    &&& v_4 & v_5 & \\
    &&&&& -\frac{v_5}{v_{35}}
\end{psmallmatrix}.$$
\end{enumerate}

Let $\Gamma\coloneqq SO_{3,3}(\Z_p)$, $\Gamma_\infty\coloneqq U\cap\Gamma$, and $\Gamma_w\coloneqq U^w\cap\Gamma$ for $w\in W$. Recall that, in order to construct a Kloosterman sum associated to some $n_w\in N$, $w\in W$, we consider the double coset space $\Gamma_\infty\backslash(G_w\cap \Gamma)/\Gamma_w$ of the set $G_w\cap\Gamma$ of integral elements of the cell $G_w$. By the following lemma, the double coset space is well-defined:

\begin{lemma}\label{lemma:so33}
$\Gamma_w$ acts freely on the right of $\Gamma_\infty\backslash(G_w\cap\Gamma)$.
\end{lemma}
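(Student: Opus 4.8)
The plan is to deduce the statement directly from the uniqueness of the Bruhat decomposition of $G(\Q_p)$; the explicit Plücker data computed above will not be needed. Unwinding the definitions, the right action in question sends $\Gamma_\infty x$ to $\Gamma_\infty(x\gamma)$ for $\gamma\in\Gamma_w$, and it is well defined because $\Gamma$ is a group and right multiplication by an element of $U^w$ preserves the cell $G_w=UwTU^w$. So what must be shown is: if $x\in G_w\cap\Gamma$, $\gamma\in\Gamma_w$, and $\Gamma_\infty x\gamma=\Gamma_\infty x$, say $x\gamma=\delta x$ with $\delta\in\Gamma_\infty$, then $\gamma=e$.

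The key point is that $U^w$ is a subgroup of $G(\Q_p)$. We write the Bruhat decomposition $x=unu'$ with $u\in U(\Q_p)$, $n\in N(\Q_p)$ with $\overline n=w$, and $u'\in U^w$; by \cite[21.80]{jsm2} these three factors are uniquely determined by $x$. Since $\gamma\in\Gamma_w\subseteq U^w$, the product $u'\gamma$ again lies in $U^w$, so $x\gamma=u\,n\,(u'\gamma)$ is the Bruhat decomposition of $x\gamma$; likewise $\delta u\in U(\Q_p)$, so $\delta x=(\delta u)\,n\,u'$ is the Bruhat decomposition of $\delta x$. Comparing these under the hypothesis $x\gamma=\delta x$, uniqueness forces $u=\delta u$ and $u'=u'\gamma$, hence $\delta=e$ and $\gamma=e$, as required.

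I do not expect a genuine obstacle here: the only ingredients are the uniqueness of the $\Q_p$-Bruhat decomposition and the fact that $U^w$ is a group, and one never needs the factors $u,n,u'$ to be $p$-integral. The one point requiring care is bookkeeping, namely keeping the torus factor consistently absorbed into $n\in N$ so that the $U$-part and $U^w$-part of an element of $G_w$ are genuinely well defined. (Equivalently one can argue by conjugation: $x\gamma=\delta x$ gives $x\gamma x^{-1}=\delta\in U(\Q_p)$, while $n\,(u'\gamma u'^{-1})\,n^{-1}\in nU^wn^{-1}\subseteq U^-(\Q_p)$ by the definition $U^w=U\cap n^{-1}U^-n$, so conjugating the identity by $u^{-1}$ lands this element in $U(\Q_p)\cap U^-(\Q_p)=\{e\}$ and again forces $\gamma=e$.) The same argument applies verbatim for every $w\in W$ and, mutatis mutandis via the relative Bruhat decomposition, to the $SO_{4,2}$ setting as well.
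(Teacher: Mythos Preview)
Your proof is correct and follows essentially the same approach as the paper: both reduce the freeness of the action to the uniqueness of the Bruhat decomposition of an element of $G_w$. The only cosmetic difference is that the paper, rather than citing uniqueness as a black box, explicitly invokes \cite[21.78]{jsm2} (the product isomorphism $U_w\times U^w\to U$) to conclude $u'\gamma(u')^{-1}\in U_w\cap U^w=\{I_6\}$; your direct appeal to \cite[21.80]{jsm2}, together with your parenthetical conjugation argument via $U\cap U^-=\{e\}$, amounts to the same thing.
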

\vspace{-7mm}
\begin{proof}
Let $\gamma\in\Gamma_w$ be some element that fixes $\Gamma_\infty utwu'$, where $u\in U$, $u'\in U^w$ and $t\in T$ are as in the unique Bruhat decomposition in \cite[21.80]{jsm2}. By \cite[21.78]{jsm2}, the multiplication map $U_w\times U^w\rightarrow U$, where $U_w\coloneqq U\cap w^{-1}Uw$, is an isomorphism. Since $\Gamma_\infty utwu'=\Gamma_\infty utwu'\gamma$, we have $u'\gamma(u')^{-1}\in U_w\cap U^w=\{I_6\}$ and therefore $\gamma=I_6$.
\end{proof}
\vspace{-5mm}
Also note that, by Theorem \ref{prop:stevens}, we can restrict our attention to the double coset spaces 
$$R_w\coloneqq(UD\cap \Gamma)\backslash(G_w\cap\Gamma)/\Gamma_w,\quad D\coloneqq T\cap\Gamma.$$
As a practical consequence, it suffices to consider Kloosterman sums only for elements $n_w\in N$ of which the entries are all powers of $p$.

Note that Proposition \ref{prop:so33_inj} gives no information about the image of $\Gamma_\infty\backslash \Gamma\rightarrow U\backslash G\rightarrow\Q_p^{62}$. By Theorem \ref{thm:friedberg_coprimality}, for an orbit in $U\backslash G$ to contain an element of $\Gamma$, it is necessary that its Plücker coordinates $v=(v_1,\dotsc,v_{23456})\in\Z_p^{62}$ satisfy
\begin{align}\label{eq:so33_coprimality}
&\left(\{v_i\}_{1\leq i\leq 6}\right)=\left(\{v_{ij}\}_{1\leq i<j\leq 6}\right)=\left(\{v_{ijk}\}_{1\leq i<j<k\leq6}\right) \\
&\quad =\left(\{v_{ijkl}\}_{1\leq i<j<k<l\leq6}\right)=\left(\{v_{ijklm}\}_{1\leq i<j<k<l<m\leq6}\right)=1. \notag
\end{align}
Per Weyl element $w\in W$, given a vector $v\in\Z_p^{62}$ satisfying \eqref{eq:so33_coprimality} as well as the explicit description of $v$ for the given $w$ in the enumeration above, we will explicitly construct a matrix in $\Gamma$ that has $v$ as its Plücker coordinates in Section \ref{sec:so33_kloosterman}. For now, we already assume the existence of these integral matrices, so that we can fully parametrize the coset representatives of $R_w$ in terms of Plücker coordinates:
\vspace{-3mm}
\begin{enumerate}
\item $w=\id$: We have $v_4,v_{45},v_{456}\in\Z_p^\times$, so 
\begin{equation*}
\begin{split}
R_{\id}=\{(&0,0,0,1,0,0; \\
&0,0,0,0,0,0,0,0,0,0,0,0,1,0,0; \\
&0,0,0,0,0,0,0,0,0,0,0,0,0,0,0,0,0,0,0,1; \\
&0,0,0,0,0,0,0,0,0,0,0,0,0,0,1; \\
&0,0,0,0,0,1)\}.
\end{split}
\end{equation*}

\item $w= s_\alpha$: We have $v_4,v_{456}\in\Z_p^\times$. Moreover, by the expression of $\Gamma_w$, we have
\begin{equation*}
\begin{split}
R_{s_\alpha}=\{(&0,0,0,1,0,0; \\
&0,0,0,0,0,0,0,0,0,0,0,0,v_{45},p^r,0; \\
&0,0,0,0,0,0,0,0,0,0,0,0,0,0,0,0,0,0,0,1; \\
&0,0,0,0,0,0,0,0,0,0,0,0,0,-p^r,v_{45}; \\
&0,0,0,0,0,1)\},
\end{split}
\end{equation*}
where $r\in\Z_{\geq0}$, and $v_{45}\,(\mo p^r)$ such that, if $r>0$, then $v_{45}\in\Z_p^\times$.

\item $w=s_\beta$: We have $v_{45},v_{456}\in\Z_p^\times$. Moreover, by the expression of $\Gamma_w$, we have
\begin{equation*}
\begin{split}
R_{s_\beta}=\{(&0,0,0,v_4,p^r,0; \\
&0,0,0,0,0,0,0,0,0,0,0,0,1,0,0; \\
&0,0,0,0,0,0,0,0,0,0,0,0,0,0,0,0,0,0,0,1; \\
&0,0,0,0,0,0,0,0,0,0,0,0,0,0,1; \\
&0,0,0,0,-p^r,v_4)\},
\end{split}
\end{equation*}
where $r\in\Z_{\geq0}$, and $v_4\,(\mo p^r)$ such that, if $r>0$, then $v_4\in\Z_p^\times$.

\item $w=s_\gamma$: We have $v_4\in\Z_p^\times$. Moreover, by the expression of $\Gamma_w$, we have 
\begin{equation*}
\begin{split}
R_{s_\gamma}=\{(&0,0,0,1,0,0; \\
&0,0,0,0,0,0,0,0,0,p^r,0,0,v_{45},0,0; \\
&0,0,0,0,0,0,0,0,0,0,p^s,0,0,\frac{v_{45}p^s}{p^r},0,0,0,\frac{v_{45}p^s}{p^r},0,\frac{v_{45}^2p^s}{p^{2r}}; \\
&0,0,0,0,0,0,0,0,0,0,-p^r,0,0,0,v_{45}; \\
&0,0,0,0,0,1)\},
\end{split}
\end{equation*}
where $r,s\in\Z_{\geq0}$, and $v_{45}\,(\mo p^r)$, such that $p^r\vert v_{45}p^s$ as well as $p^{2r}\vert v_{45}^2p^s$ and
$$\left(p^s,\frac{v_{45}p^s}{p^r},\frac{v_{45}^2p^s}{p^{2r}}\right)=1$$
and, if $r>0$, then $v_{45}\in\Z_p^\times$.

\item $w=s_\alpha s_\beta$: We have $v_{456}\in\Z_p^\times$. Moreover, by the expression of $\Gamma_w$, we have 
\begin{equation*}
\begin{split}
R_{s_\alpha s_\beta}=\{(&0,0,0,v_4,p^r,0; \\
&0,0,0,0,0,0,0,0,0,0,0,0,v_{45},\frac{v_4p^s}{p^r},p^s; \\
&0,0,0,0,0,0,0,0,0,0,0,0,0,0,0,0,0,0,0,1; \\
&0,0,0,0,0,0,0,0,0,p^s,0,0,0,-\frac{v_4p^s}{p^r},v_{45}; \\
&0,0,0,0,-p^r,v_4)\},
\end{split}
\end{equation*}
where $r,s\in\Z_{\geq0}$, and $v_4\,(\mo p^r)$, $v_{45}\,(\mo p^s)$, such that $p^r\vert v_4p^s$ and
$$\left(v_{45},\frac{v_4p^s}{p^r},p^s\right)=1$$
and, if $r>0$, then $v_4\in\Z_p^\times$.

\item $w=s_\alpha s_\gamma$: We have $v_4\in\Z_p^\times$. Moreover, by the expression of $\Gamma_w$, we have
\begin{equation*}
\begin{split}
R_{s_\alpha s_\gamma}=\{(&0,0,0,1,0,0; \\
&0,0,0,0,0,0,p^r,0,0,v_{34},0,0,-\frac{v_{34}v_{46}}{p^r},v_{46},0; \\
&0,0,0,0,0,0,0,0,0,0,p^s,0,0,-\frac{v_{46}p^s}{p^r},0,0,0,-\frac{v_{46}p^s}{p^r},0,\frac{v_{46}^2p^s}{p^{2r}}; \\
&0,0,0,0,0,0,0,0,0,0,-v_{34},p^r,0,-v_{46},-\frac{v_{34}v_{46}}{p^r}; \\
&0,0,0,0,0,1)\},
\end{split}
\end{equation*}
where $r,s\in\Z_{\geq0}$, and $v_{34},v_{46}\,(\mo p^r)$, such that $p^r\vert v_{34}v_{46},v_{46}p^s$ as well as $p^{2r}\vert v_{46}^2p^s$ and
$$\left(p^r,v_{34},v_{46},\frac{v_{34}v_{46}}{p^r}\right)=\left(p^s,\frac{v_{46}p^s}{p^r},\frac{v_{46}^2p^s}{p^{2r}}\right)=1.$$

\item $w=s_\beta s_\alpha$: We have $v_{456}\in\Z_p^\times$. Moreover, by the expression of $\Gamma_w$, we have
\begin{equation*}
\begin{split}
R_{s_\beta s_\alpha}=\{(&0,0,0,v_4,v_5,p^r; \\
&0,0,0,0,0,0,0,0,0,0,0,0,\frac{v_5p^s}{p^r},p^s,0; \\
&0,0,0,0,0,0,0,0,0,0,0,0,0,0,0,0,0,0,0,1; \\
&0,0,0,0,0,0,0,0,0,0,0,0,0,-p^s,\frac{v_5p^s}{p^r}; \\
&0,0,0,p^r,-v_5,v_4)\},
\end{split}
\end{equation*}
where $r,s\in\Z_{\geq0}$, and $v_4,v_5\,(\mo p^r)$, such that $p^r\vert v_5p^s$ and
$$\left(v_4,v_5,p^r\right)=\left(\frac{v_5p^s}{p^r},p^s\right)=1.$$

\item $w=s_\beta s_\gamma$: By the expression of $\Gamma_w$, we have
\begin{equation*}
\begin{split}
R_{s_\beta s_\gamma}=\{(&0,0,p^r,v_4,v_5,0; \\
&0,0,0,0,0,0,0,0,0,p^s,0,0,-\frac{v_5p^s}{p^r},0,0; \\
&0,0,0,0,0,0,0,0,0,0,p^t,0,0,-\frac{v_5p^t}{p^r},0,0,0,-\frac{v_5p^t}{p^r},0,\frac{v_5^2p^t}{p^{2r}}; \\
&0,0,0,0,0,0,0,0,0,0,-p^s,0,0,0,-\frac{v_5p^s}{p^r}; \\
&-p^r,0,0,0,-v_5,v_4)\},
\end{split}
\end{equation*}
where $r,s,t\in\Z_{\geq0}$, and $v_4,v_5\,(\mo p^r)$, such that $p^r\vert v_5p^s,v_5p^t$ as well as $p^{2r}\vert v_5^2p^t$ and 
$$\left(p^r,v_4,v_5\right)=\left(p^s,\frac{v_5p^s}{p^r}\right)=\left(p^t,\frac{v_5p^t}{p^r},\frac{v_5^2p^t}{p^{2r}}\right)=1.$$

\item $w=s_\gamma s_\beta$: By the expression of $\Gamma_w$, we have 
\begin{align*}
R_{s_\gamma s_\beta}=\{(&0,0,0,v_4,p^r,0; \\
&0,0,0,0,0,0,0,0,0,\frac{v_4p^s}{p^r},p^s,0,v_{45},0,0; \\
&0,0,0,0,\frac{v_4p^t}{p^r},p^t,0,\frac{v_{45}p^t}{p^s},0,0,-\frac{v_4^2p^t}{p^{2r}},-\frac{v_4p^t}{p^r},0, \\
&-\frac{v_4v_{45}p^t}{p^rp^s},0,0,0,-\frac{v_4v_{45}p^t}{p^rp^s},-\frac{v_{45}p^t}{p^s},-\frac{v_{45}^2p^t}{p^{2s}}; \\
&0,0,0,0,0,0,p^s,0,0,0,-\frac{v_4p^s}{p^r},0,0,0,v_{45}; \\
&0,0,0,0,-p^r,v_4)\},
\end{align*}
where $r,s,t\in\Z_{\geq0}$, and $v_4\,(\mo p^r)$, $v_{45}\,(\mo p^s)$, such that $p^r\vert v_4p^s,v_4p^t$ as well as $p^s\vert v_{45}p^t$ and $p^{2r}\vert v_4^2p^t$ and $p^rp^s\vert v_4v_{45}p^t$ and $p^{2s}\vert v_{45}^2p^t$ and
$$\left(\frac{v_4p^s}{p^r},p^s,v_{45}\right)=\left(\frac{v_4p^t}{p^r},p^t,\frac{v_{45}p^t}{p^s},\frac{v_4^2p^t}{p^{2r}},\frac{v_4v_{45}p^t}{p^rp^s},\frac{v_{45}^2p^t}{p^{2s}}\right)=1$$
and, if $r>0$, then $v_4\in\Z_p^\times$.
\end{enumerate}

\subsection{Explicit description}\label{sec:so33_kloosterman}
Characters of $U$ trivial on $\Gamma$ must be of the form $\psi\coloneqq\psi_{m_1,m_2,m_3}$, $\psi'\coloneqq\psi_{n_1,n_2,n_3}$, with $m_1,m_2,m_3,n_1,n_2,n_3\in\Z_p$, where
$$\psi_{m_1,m_2,m_3}\begin{pmatrix}
    1 & x & * & * & * & * \\
    & 1 & y & * & * & z \\
    && 1 & * & -z &  \\
    &&& 1 && \\
    &&& -x & 1 & \\
    &&& * & -y & 1
\end{pmatrix}\coloneqq e(m_1x+m_2y+m_3z)$$
and similarly for $\psi_{n_1,n_2,n_3}$. Per length of the Weyl element in $W$, we write down the explicit Kloosterman sum: 

\begin{enumerate}
\item The only Weyl element of length $0$ is $w=\id$. We have $n_{\id}=I_6$ (up to units) and the Kloosterman sum
$$\kl_p(\psi,\psi';n_{\id})=1$$
is trivial because $R_w$ has only one element, represented by the identity matrix.

\item Regarding Weyl elements of length $1$, we illustrate the procedure of obtaining a Kloosterman sum for $w=s_\alpha$. From Section \ref{sec:so33_plucker}, we know that a representative in $G_w$ for an arbitrary element $v=(v_1,\dotsc,v_{23456})\in R_w$ is given by
$$A\coloneqq\begin{psmallmatrix}
    1 &&&&& \\
    && \frac{1}{p^r} &&& \\
    & -p^r & v_{45} &&& \\
    &&& 1 && \\
    &&&& v_{45} & p^r \\
    &&&& -\frac{1}{p^r} &
\end{psmallmatrix},$$
where $r\in\Z_{\geq0}$, $v_{45}\,(\mo p^r)$, and, if $r>0$, then $v_{45}\in\Z_p^\times$. Assume for simplicity that $r>0$; otherwise we obtain a trivial Kloosterman sum anyway. The $U$-orbit of $A$ contains an integral matrix: 
\begin{align*}
&\begin{psmallmatrix}
        1 &&&&& \\
        & 1 & -\frac{1}{v_{45}p^r} &&& \\
        && 1 &&& \\
        &&& 1 && \\
        &&&& 1 & \\
        &&&& \frac{1}{v_{45}p^r} & 1
\end{psmallmatrix}A = \begin{psmallmatrix}
        1 &&&&& \\
        & \frac{1}{v_{45}} &&&& \\
        & -p^r & v_{45} &&& \\
        &&& 1 && \\
        &&&& v_{45} & p^r \\
        &&&&& \frac{1}{v_{45}} 
\end{psmallmatrix} \\
&=\begin{psmallmatrix}
        1 &&&&& \\
        & 1 & -\frac{1}{v_{45}p^r} &&& \\
        && 1 &&& \\
        &&& 1 && \\
        &&&& 1 & \\
        &&&& \frac{1}{v_{45}p^r} & 1
\end{psmallmatrix}\begin{psmallmatrix}
    1 &&&&& \\
    && 1 &&& \\
    & 1 &&&& \\
    &&& 1 && \\
    &&&&& 1 \\
    &&&& 1 &
\end{psmallmatrix}\begin{psmallmatrix}
    1 &&&&& \\
    & -p^r &&&& \\
    && \frac{1}{p^r} &&& \\
    &&& 1 && \\
    &&&& -\frac{1}{p^r} & \\
    &&&&& p^r
\end{psmallmatrix}\begin{psmallmatrix}
    1 &&&&& \\
    & 1 & -\frac{v_{45}}{p^r} &&& \\
    && 1 &&& \\
    &&& 1 && \\
    &&&& 1 & \\
    &&&& \frac{v_{45}}{p^r} & 1
\end{psmallmatrix}.
\end{align*}
We conclude that for
$$n_{s_\alpha,r}\coloneqq\begin{psmallmatrix}
    1 &&&&& \\
    && p^{-r} &&& \\
    & -p^r &&&& \\
    &&& 1 && \\
    &&&&& p^r \\
    &&&& -p^{-r} & 
\end{psmallmatrix}$$
for some $r\in\Z_{\geq0}$, we recover the classical Kloosterman sum
\begin{align*}
\kl_p(\psi,\psi';n_{s_\alpha,r}) = \sum_{\substack{v_{45}\,(\mo p^r) \\ (v_{45},p^r)=1}}e\left(-\frac{m_2\overline{v_{45}}+n_2v_{45}}{p^r}\right) = S(m_2,n_2;p^r).
\end{align*}

\noindent In an entirely analogous manner as for $w=s_\alpha$, one recovers the following classical Kloosterman sums for $w\in\{s_\beta,s_\gamma\}$:

\begin{align*}
&\,n_{s_\beta,r} \coloneqq \begin{psmallmatrix}
    & p^{-r} &&&& \\
    -p^r &&&&& \\
    && 1 &&& \\
    &&&& p^r & \\
    &&& -p^{-r} && \\
    &&&&& 1
\end{psmallmatrix},\quad r\in \Z_{\geq0}, \\
&\kl_p(\psi,\psi';n_{s_\beta,r}) = \sum_{\substack{v_4\,(\mo p^r) \\ (v_4,p^r)=1}}e\left(-\frac{m_1\overline{v_4}+n_1v_4}{p^r}\right) = S(m_1,n_1;p^r), \\
&\,n_{s_\gamma,r}\coloneqq\begin{psmallmatrix}
    1 &&&&& \\
    &&&&& -p^{-r} \\
    &&&& p^{-r} & \\
    &&& 1 && \\
    && -p^r &&& \\
    & p^r &&&&
\end{psmallmatrix},\quad r\in \Z_{\geq0}, \\
&\kl_p(\psi,\psi';n_{s_\gamma,r}) = \sum_{\substack{v_{45}\,(\mo p^r) \\ (v_{45},p^r)=1}}e\left(\frac{m_3\overline{v_{45}}+n_3v_{45}}{p^r}\right) = S(m_3,n_3;p^r)
\end{align*}

\noindent The only Weyl element of length 2 that yields similar sums is $w=s_\alpha s_\gamma$, giving a product of two classical Kloosterman sums:
\begin{align*}
&\,n_{s_\alpha s_\gamma,r,s}\coloneqq\begin{psmallmatrix}
    1 &&&&& \\
    &&&& -p^{-r-s} & \\
    &&&&& -p^{s-r} \\
    &&& 1 && \\
    & -p^{r+s} &&&& \\
    && -p^{r-s} &&&
\end{psmallmatrix},\quad r,s\in\Z_{\geq0}, \\
&\kl_p(\psi,\psi';n_{s_\alpha s_\gamma,r,s}) = S(m_2,n_2;p^s)S(m_3,n_3;p^r)
\end{align*}

\item As illustrated by the previous case, the routine of constructing the Kloosterman sum associated to some $w\in W$ consists of the four following steps: (i) write down a representative matrix $A\in G_w$ that has Plücker coordinates $v\in R_w$, (ii) simplify the coprimality conditions on $v$, (iii) provide an element of $\gamma\in U$ such that $\gamma A\in\Gamma$, (iv) deduce the Kloosterman sum for each valid representative $n_w\in N$ of $w$. We continue this routine for the Weyl elements of length $2$, by illustrating the case of $w=s_\alpha s_\beta$.
We have
$$A\coloneqq \begin{psmallmatrix}
    & \frac{1}{p^r} &&&& \\
    && \frac{p^r}{p^s} &&& \\
    p^s & -\frac{v_4p^s}{p^r} & v_{45} &&& \\
    &&& v_4 & p^r & \\
    &&& -\frac{v_{45}}{p^r} && \frac{p^s}{p^r} \\
    &&& \frac{1}{p^s} && 
\end{psmallmatrix},$$
for some $r,s\in\Z_\geq0$, and $v_4\,(\mo p^r)$, $v_{45}\,(\mo p^s)$ such that $\frac{v_4p^s}{p^r}\in\Z_p$ as well as 
\begin{equation}\label{eq:so33_coprimality_alpha_beta}
\left(v_{45},\frac{v_4p^s}{p^r},p^s\right)=1
\end{equation}
and, if $r>0$, then $v_4\in\Z_p^\times$. Assume for simplicity that $r>0$; otherwise we obtain a classical Kloosterman sum anyway. Write $v_{45}=ap^k$ for some $a\in(\Z/p^{s-k}\Z)\cap (\Z_p^\times\cup\{0\})$, and $0\leq k<s$. If $a\neq0$, then the coprimality condition \eqref{eq:so33_coprimality_alpha_beta} translates into $\min\{k,s-r,s\}=0$, so either $k=0$ or $r=s$. If $r<s$, then $k=0$, and we have 
\begin{align*}
&\begin{psmallmatrix}
    1 & -\frac{a}{v_4p^r} & \frac{1}{v_4p^s} &&& \\
    & 1 & -\frac{p^r}{ap^s} &&& \\
    && 1 &&& \\
    &&& 1 && \\
    &&& \frac{a}{v_4p^r} & 1 & \\
    &&&& \frac{p^r}{ap^s} & 1
\end{psmallmatrix}A = \begin{psmallmatrix}
    \frac{1}{v_4} &&&&& \\
    -\frac{p^r}{a} & \frac{v_4}{a} &&&& \\
    p^s & -\frac{v_4p^s}{p^r} & a &&& \\
    &&& v_4 & p^r & \\
    &&&& \frac{a}{v_4} & \frac{p^s}{p^r} \\
    &&&&& \frac{1}{a}
\end{psmallmatrix} \\
&=\begin{psmallmatrix}
    1 & -\frac{a}{v_4p^r} & \frac{1}{v_4p^s} &&& \\
    & 1 & -\frac{p^r}{ap^s} &&& \\
    && 1 &&& \\
    &&& 1 && \\
    &&& \frac{a}{v_4p^r} & 1 & \\
    &&&& \frac{p^r}{ap^s} & 1
\end{psmallmatrix}\begin{psmallmatrix}
    & 1 &&&& \\
    && 1 &&& \\
    1 &&&&& \\
    &&&& 1 & \\
    &&&&& 1 \\
    &&& 1 && 
\end{psmallmatrix}\begin{psmallmatrix}
    p^s &&&&& \\
    & \frac{1}{p^r} &&&& \\
    && \frac{p^r}{p^s} &&& \\
    &&& \frac{1}{p^s} && \\
    &&&& p^r & \\
    &&&&& \frac{p^s}{p^r}
\end{psmallmatrix}\begin{psmallmatrix}
    1 & -\frac{v_4}{p^r} & \frac{a}{p^s} &&& \\
    & 1 &&&& \\
    && 1 &&& \\
    &&& 1 && \\
    &&& \frac{v_4}{p^r} & 1 & \\
    &&& -\frac{a}{p^s} && 1
\end{psmallmatrix}.
\end{align*}

On the other hand, if $r=s$, we have
\begin{align*}
&\begin{psmallmatrix}
    1 & -\frac{ap^k}{v_4p^r} & \frac{1}{v_4p^r} &&& \\
    & 1 &&&& \\
    && 1 &&& \\
    &&& 1 && \\ 
    &&& \frac{ap^k}{v_4p^r} & 1 & \\
    &&& -\frac{1}{v_4p^r} && 1
\end{psmallmatrix}A = \begin{psmallmatrix}
    \frac{1}{v_4} &&&&& \\
    && 1 &&& \\
    p^r & -v_4 & ap^k &&& \\
    &&& v_4 & p^r & \\
    &&&& \frac{ap^k}{v_4} & 1 \\
    &&&& -\frac{1}{v_4} &
\end{psmallmatrix} \\
&= \begin{psmallmatrix}
    1 & -\frac{ap^k}{v_4p^r} & \frac{1}{v_4p^r} &&& \\
    & 1 &&&& \\
    && 1 &&& \\
    &&& 1 && \\ 
    &&& \frac{ap^k}{v_4p^r} & 1 & \\
    &&& -\frac{1}{v_4p^r} && 1
\end{psmallmatrix}\begin{psmallmatrix}
    & 1 &&&& \\
    && 1 &&& \\
    1 &&&&& \\
    &&&& 1 & \\
    &&&&& 1 \\
    &&& 1 && 
\end{psmallmatrix}\begin{psmallmatrix}
    p^r &&&&& \\
    & \frac{1}{p^r} &&&& \\
    && 1 &&& \\
    &&& \frac{1}{p^r} && \\
    &&&& p^r & \\
    &&&&& 1
\end{psmallmatrix}\begin{psmallmatrix}
    1 & -\frac{v_4}{p^r} & \frac{ap^k}{p^r} &&& \\
    & 1 &&&& \\
    && 1 &&& \\
    &&& 1 && \\
    &&& \frac{v_4}{p^r} & 1 & \\
    &&& -\frac{ap^k}{p^r} && 1
\end{psmallmatrix}.
\end{align*}
If $a=0$, then the coprimality condition \eqref{eq:so33_coprimality_alpha_beta} reduces to $r=s$, and the same matrices as for the previous case $\{a\neq0,r=s\}$ can be considered. We conclude that for 
$$n_{s_\alpha s_\beta,r,s}\coloneqq\begin{psmallmatrix}
    & p^{-r} &&&& \\
    && p^{r-s} &&& \\
    p^s &&&&& \\
    &&&& p^r & \\
    &&&&& p^{s-r} \\
    &&& p^{-s}
\end{psmallmatrix}$$
for some $r,s\in\Z_{\geq0}$, $r\leq s$, we recover the $GL_3$ Kloosterman sum (defined in \eqref{eq:gl3_sum})
\begin{align*}
\kl_p(\psi,\psi';n_{s_\alpha s_\beta,r,s}) &= \sum_{\substack{v_4\,(\mo p^r) \\ (v_4,p^r)=1}}\sum_{\substack{a\,(\mo p^s) \\ (a,p^{s-r})=1}}e\left(-\frac{m_1a\overline{v_4}+n_1v_4}{p^r}\right)e\left(-\frac{m_2\overline{a}}{p^{s-r}}\right) \\
&= S_3(-n_1,m_1,m_2;p^r,p^s).
\end{align*}

\noindent In an entirely analogous manner as for $w=s_\alpha s_\beta$, one recovers the following $GL_3$ sums for $w\in\{s_\beta s_\alpha, s_\beta s_\gamma, s_\gamma s_\beta\}$:

\begin{align*}
&\,n_{s_\beta s_\alpha,r,s}\coloneqq\begin{psmallmatrix}
    && p^{-r} &&& \\
    -p^{r-s} &&&&& \\
    & -p^s &&&& \\
    &&&&& p^r \\
    &&& -p^{s-r} && \\
    &&&& -p^{-s} &
\end{psmallmatrix},\quad r,s\in\Z_{\geq0},\,r\geq s, \\
&\kl_p(\psi,\psi';n_{s_\beta s_\alpha,r,s}) = S_3(-n_2,m_2,m_1;p^s,p^r), \\
&\,n_{s_\beta s_\gamma, r,s}\coloneqq\begin{psmallmatrix}
    &&&&& p^{-r} \\
    p^{r-s} &&&&& \\
    &&&& p^{-s} & \\
    && p^r &&& \\
    &&& p^{s-r} && \\
    & p^s &&&&
\end{psmallmatrix},\quad r,s\in\Z_{\geq0},\,r\geq s, \\
&\kl_p(\psi,\psi';n_{s_\beta s_\gamma, r,s}) = S_3(n_3,m_3,m_1;p^s,p^r), \\
&\,n_{s_\gamma s_\beta,r,s}\coloneqq\begin{psmallmatrix}
    & p^{-r} &&&& \\
    &&&&& -p^{r-s} \\
    &&& p^{-s} && \\
    &&&& p^r & \\
    && -p^{s-r} &&& \\
    p^s &&&&&
\end{psmallmatrix},\quad r,s\in\Z_{\geq0},\,r\leq s, \\
&\kl_p(\psi,\psi';n_{s_\gamma s_\beta,r,s}) = S_3(-n_1,m_1,m_3;p^r,p^s)
\end{align*}
\end{enumerate}

\subsection{Bounds}\label{sec:so33_bounds}
The $SO_{3,3}$ Kloosterman sums obtained above can be non-trivially bounded using Weil's bound \eqref{eq:weil_bound} for classical Kloosterman sums and the bound \eqref{eq:larsen_bound} for the $GL_3$ sums.

The strategy of the proof of Theorem \ref{thm:bfg} is to work with case distinction on the values of the parameters $r$ and $s$, and in each case to reduce to a simpler sum that we know how to bound. The case where $p\nmid m_1n_1n_2$ and $s=2r$ is non-trivial: if $r=1$ one applies Deligne's bound for hyper-Kloosterman sums \cite{D}, while if $r>1$ one applies the $p$-adic stationary phase method from \cite{DF}.

\section{$SO_{4,2}$ Kloosterman sums}\label{sec:so42_sums_section}
Throughout this section, consider the reductive group $G=SO_{4,2}(\Q_p)$, $p\not\equiv1\,(\mo 4)$, with maximal $\Q_p$-split torus
$$S=\left\{\begin{psmallmatrix}
    s_1 &&&&& \\ & s_2 &&&& \\ && 1 &&& \\ &&& 1 && \\ &&&& s_1^{-1} & \\ &&&&& s_2^{-1}
\end{psmallmatrix}\right\}_{s_1,s_2\in \Q_p^\times}$$
and centralizer
$$C= C_G(S) = \left\{\begin{psmallmatrix}
    s_1 &&&&& \\
    & s_2 &&&& \\
    && a & b && \\
    && -b & a && \\
    &&&& s_1^{-1} & \\
    &&&&& s_2^{-1}
\end{psmallmatrix}\right\}_{\substack{s_1,s_2\in \Q_p^\times \\ a,b\in \Q_p \\ a^2+b^2=1}}.$$
The character group of $S$ is given by $X^*(S)\cong \Z\alpha_1\oplus\Z\alpha_2$, where 
$$\alpha_i:S\rightarrow\G_m:\diag(s_1,s_2,1,1,s_1^{-1},s_2^{-1})\mapsto s_i,$$
resulting in a relative root system ${}_{\Q_p}\Phi(G,S)=\{\pm\alpha_1,\pm\alpha_2,\pm\alpha_1\pm\alpha_2\}$. We choose the base ${}_{\Q_p}\Delta=\{\alpha_2,\alpha_1-\alpha_2\}$, which induces the system of positive roots ${}_{\Q_p}\Phi^+=\{\alpha_1,\alpha_2,\alpha_1\pm\alpha_2\}$ and the unipotent subgroup
$$U=\left\{\begin{psmallmatrix}
    1 & x & y & z & -\frac{y^2+z^2}{2}-xt & t \\
    & 1 & u & v & \frac{x(u^2+v^2)}{2}-yu-zv-t & -\frac{u^2+v^2}{2} \\
    && 1 && xu-y & -u \\
    &&& 1 & xv-z & -v \\
    &&&& 1 & \\
    &&&& -x & 1
\end{psmallmatrix}\right\}_{x,y,z,t,u,v\in \Q_p}.$$
The relative Weyl group is given by
\begin{align*}
{}_{\Q_p}W = \left\{\id,s_\alpha,s_\beta,s_\alpha s_\beta,s_\beta s_\alpha,s_\alpha s_\beta s_\alpha,s_\beta s_\alpha s_\beta,s_\alpha s_\beta s_\alpha s_\beta\right\},
\end{align*}
where $\alpha= \alpha_2$, $\beta=\alpha_1-\alpha_2$, and
$$s_{\alpha}=\begin{psmallmatrix}
    1 &&&&& \\
    &&&&& 1 \\
    && 1 &&& \\
    &&& -1 && \\
    &&&& 1 & \\
    & 1 &&&&
\end{psmallmatrix},\quad s_\beta=\begin{psmallmatrix}
    & 1 &&&& \\
    1 &&&&& \\
    && 1 &&& \\
    &&& 1 && \\
    &&&&& 1 \\
    &&&& 1 &
\end{psmallmatrix}.$$

In Section \ref{sec:so42_plucker}, we parametrize the representatives of the Kloosterman set $X(n)$ associated to the Weyl elements $\overline{n}\in W$ of length at most $1$, using Plücker coordinates. Based on this information, we construct in Section \ref{sec:so42_explicit} the Kloosterman sum associated to these $n$. In Section \ref{sec:so42_bounds}, we discuss non-trivial bounds for the resulting sums.

\subsection{Pl\"ucker coordinates}\label{sec:so42_plucker}
For a general matrix $A=(a_{ij})_{1\leq i,j\leq 6}\in G$
we define its \textit{Plücker coordinates} as 
\begin{align*}
&\quad\quad\quad v_1=a_{51},\quad v_2=a_{52},\quad v_3=a_{53},\quad v_4=a_{54},\quad v_5=a_{55},\quad v_6=a_{56},\\
&(v_{ij})_{1\leq i<j\leq 6},\quad(v_{ijk})_{1\leq i<j<k\leq 6},\quad(v_{ijkl})_{1\leq i<j<k<l\leq 6},\quad(v_{ijklm})_{1\leq i<j<k<l<m\leq 6}.
\end{align*}
Here, the last four symbols of the form $v_\lambda$ (for an ordered subset $\lambda$ of $\{1,\dotsc,6\}$) denote the minors of $A$ formed by the $\lvert\lambda\rvert$ bottom rows and the columns indexed by $\lambda$. Note that the $v_i$'s, $1\leq i\leq6$, do not denote the $1\times 1$ minors on the bottom row of $A$, which is a technical choice made to ensure that all Plücker coordinates are invariant under left multiplication by $U$. Also note that there are $62$ Plücker coordinates in total.

Similarly as for $SO_{3,3}$, we now explicitly compute the Pl\"ucker coordinates occurring in a given Bruhat cell $G_w$ of $G$. We will rely on the following result, the proof of which is similar to that of Proposition \ref{prop:so33_inj}:

\begin{prop}\label{prop:so42_inj}
The Plücker coordinates defined above induce an injection from $U\backslash G$ to $\Q_p^{62}$.
\end{prop}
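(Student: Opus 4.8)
The plan is to mirror the proof of Proposition \ref{prop:so33_inj}: transport the statement to $SL_6$, where Friedberg's Theorem \ref{thm:friedberg2} applies, by means of a single permutation matrix that realigns the rows used in our definition of the Plücker coordinates with the ``bottom rows'' convention of \cite{F}. Concretely, let $v\in SL_6(\Q_p)$ be the permutation matrix of the transposition $(5\,6)$,
$$v=\begin{psmallmatrix}1&&&&&\\&1&&&&\\&&1&&&\\&&&1&&\\&&&&&1\\&&&&1&\end{psmallmatrix},$$
so that $v^{-1}=v$ and conjugation by $v$ swaps rows $5$ and $6$ and columns $5$ and $6$. The nested families of rows used to define our Plücker coordinates — row $5$ for the $v_i$, rows $\{5,6\}$ for the $v_{ij}$, rows $\{4,5,6\}$ for the $v_{ijk}$, and so on — are each carried by this conjugation onto the corresponding family of bottom rows $\{6\},\{5,6\},\{4,5,6\},\dots$ occurring in Friedberg's definition. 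Hence for $A\in SO_{4,2}(\Q_p)$ the Plücker coordinates of $A$ in the sense of this section and Friedberg's Plücker coordinates $M(v^{-1}Av)$ of the $SL_6$-element $v^{-1}Av$ determine one another (via a fixed signed permutation of the $62$ coordinates); this is precisely why our coordinates are invariant under left multiplication by $U$. In particular, by Theorem \ref{thm:friedberg2} the Plücker coordinates of $A$ determine the left coset $U'\,v^{-1}Av$, where $U'$ denotes the standard upper-triangular unipotent subgroup of $SL_6(\Q_p)$.

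The key point is then the identity $v\,U'\,v^{-1}\cap SO_{4,2}(\Q_p)=U$. The inclusion $U\subseteq v\,U'\,v^{-1}$ is immediate from the explicit description of $U$: the only nonzero below-diagonal entry of an element of $U$ sits in position $(6,5)$, and conjugating by $v^{-1}$ moves it strictly above the diagonal while leaving every other entry upper-triangular with unit diagonal. For the reverse inclusion one considers a general element $g=v\gamma v^{-1}$ with $\gamma\in U'$, so that $g$ is upper-triangular with unit diagonal, save that position $(5,6)$ is forced to vanish and position $(6,5)$ is arbitrary, and imposes the defining relation $gMg^{t}=M$ of $SO_{4,2}$. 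Solving the resulting system of polynomial equations in the entries of $g$ forces $g$ to have exactly the shape displayed for $U$, with the six free parameters $x,y,z,t,u,v$. This finite computation is entirely analogous to the one behind Proposition \ref{prop:so33_inj}, and it is the main (though only bookkeeping-heavy) step of the argument.

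Granting this identity, injectivity follows formally. If $A_1,A_2\in SO_{4,2}(\Q_p)$ have the same Plücker coordinates, then $v^{-1}A_1v$ and $v^{-1}A_2v$ have the same Friedberg coordinates, hence lie in a single left $U'$-coset by Theorem \ref{thm:friedberg2}: $v^{-1}A_1v=\gamma\,v^{-1}A_2v$ for some $\gamma\in U'$. Then $v\gamma v^{-1}=A_1A_2^{-1}\in SO_{4,2}(\Q_p)$, so by the identity above $v\gamma v^{-1}\in U$, and therefore $A_1$ and $A_2$ represent the same class in $U\backslash G$. Thus the Plücker map $U\backslash G\to\Q_p^{62}$ is injective. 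The only real obstacle is the explicit verification that $v\,U'\,v^{-1}\cap SO_{4,2}(\Q_p)=U$; everything else is formal transport along $v$ together with Theorem \ref{thm:friedberg2}.
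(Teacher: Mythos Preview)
Your proposal is correct and follows essentially the same approach as the paper, which simply states that the proof is ``similar to that of Proposition~\ref{prop:so33_inj}''. You have correctly identified the transposition $(5\,6)$ as the appropriate conjugating permutation for the $SO_{4,2}$ case and spelled out the transport-to-$SL_6$ argument that the paper leaves implicit.
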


We have a relative Bruhat decomposition
$$G = \sqcup_{w\in {}_{\Q_p}W}G_w,\quad G_w\coloneqq UwCU^w,$$
which consists of eight cells $G_w$, indexed by the elements of ${}_{\Q_p}W$. An orbit in $U\backslash G$ is represented by a matrix in $wCU^w$ for some $w\in {}_{\Q_p}W$. For a matrix in $wCU^w$, with $w$ of length at most 1, we check (by explicit computation) all possible configurations for its corresponding Plücker coordinates $v=(v_1,\dotsc,v_{23456})\in\Q_p^{62}$:

\begin{enumerate}
\item $\begin{aligned}[t]w=\id:\quad
v\in\{(&0,0,0,0,v_5,0; \\
&0,0,0,0,0,0,0,0,0,0,0,0,0,0,v_{56}; \\
&0,0,0,0,0,0,0,0,0,0,0,0,0,0,0,0,0,0,v_{356},v_{456}; \\
&0,0,0,0,0,0,0,0,0,0,0,0,0,0,v_{56}; \\
&0,0,0,0,0,{v_5})\,:\,v_5,v_{56}\in\Q_p^\times,v_{356},v_{456}\in\Q_p,v_{356}^2+v_{456}^2=v_{56}^2\}
\end{aligned}$

\noindent Per set of coordinates, a representative matrix in $wCU^w$ is given by 
$$\begin{psmallmatrix}
    \frac{1}{v_5} &&&&& \\
    & \frac{v_5}{v_{56}} &&&& \\
    && \frac{v_{456}}{v_{56}} & -\frac{v_{356}}{v_{56}} && \\
    && \frac{v_{356}}{v_{56}} & \frac{v_{456}}{v_{56}} && \\
    &&&& v_5 & \\
    &&&&& \frac{v_{56}}{v_5}
\end{psmallmatrix}=
\begin{psmallmatrix}
    1 &&&&& \\ & 1 &&&& \\ && 1 &&& \\ &&& 1 && \\ &&&& 1 & \\ &&&&& 1
\end{psmallmatrix}
\begin{psmallmatrix}
    \frac{1}{v_5} &&&&& \\
    & \frac{v_5}{v_{56}} &&&& \\
    && \frac{v_{456}}{v_{56}} & -\frac{v_{356}}{v_{56}} && \\
    && \frac{v_{356}}{v_{56}} & \frac{v_{456}}{v_{56}} && \\
    &&&& v_5 & \\
    &&&&& \frac{v_{56}}{v_5}
\end{psmallmatrix}.$$

\item $\begin{aligned}[t]w=s_\alpha: \quad
v \in\{(&0,0,0,0,v_5,0; \\
&0,0,0,0,0,0,0,v_{25},0,0,v_{35},0,v_{45},0,\frac{v_{35}^2+v_{45}^2}{2v_{25}}; \\
&0,0,0,0,0,0,0,0,0,0,0,v_{235},0,v_{245},0,\frac{v_{35}v_{235}+v_{45}v_{245}}{v_{25}},\\
&\frac{v_{35}v_{245}-v_{45}v_{235}}{v_{25}},0,\frac{v_{235}(v_{35}^2-v_{45}^2)+2v_{35}v_{45}v_{245}}{2v_{25}^2},\frac{v_{245}(v_{45}^2-v_{35}^2)+2v_{35}v_{45}v_{235}}{2v_{25}^2}; \\
&0,0,0,0,0,0,0,0,0,0,-v_{25},0,-v_{45},v_{35},\frac{v_{35}^2+v_{45}^2}{2v_{25}}; \\
&0,0,0,0,0,v_5)\,:\,v_5,v_{25}\in\Q_p^\times,v_{35},v_{45},v_{235},v_{245}\in\Q_p,v_{235}^2+v_{245}^2=v_{25}^2\}\end{aligned}$

\noindent Per set of coordinates, a representative matrix in $wCU^w$ is given by
$$\begin{psmallmatrix}
    \frac{1}{v_5} &&&&& \\ 
    &&&&& -\frac{v_5}{v_{25}} \\
    && \frac{v_{245}}{v_{25}} & -\frac{v_{235}}{v_{25}} && \frac{v_{45}v_{235}-v_{35}v_{245}}{v_{25}^2} \\
    && -\frac{v_{235}}{v_{25}} & -\frac{v_{245}}{v_{25}} && \frac{v_{35}v_{235}+v_{45}v_{245}}{v_{25}^2} \\
    &&&& v_5 & \\
    & -\frac{v_{25}}{v_5} & -\frac{v_{35}}{v_5} & -\frac{v_{45}}{v_5} && \frac{v_{35}^2+v_{45}^2}{2v_5v_{25}}
\end{psmallmatrix}=\begin{psmallmatrix}
    1 &&&&& \\
    &&&&& 1 \\
    && 1 &&& \\
    &&& -1 && \\
    &&&& 1 & \\
    & 1 &&&&
\end{psmallmatrix}\begin{psmallmatrix}
    \frac{1}{v_5} &&&&& \\
    & -\frac{v_{25}}{v_5} & -\frac{v_{35}}{v_5} & -\frac{v_{45}}{v_5} && \frac{v_{35}^2+v_{45}^2}{2v_5v_{25}} \\
    && \frac{v_{245}}{v_{25}} & -\frac{v_{235}}{v_{25}} && \frac{v_{45}v_{235}-v_{35}v_{245}}{v_{25}^2} \\
    && \frac{v_{235}}{v_{25}} & \frac{v_{245}}{v_{25}} && -\frac{v_{35}v_{235}+v_{45}v_{245}}{v_{25}^2} \\
    &&&& v_5 & \\
    &&&&& -\frac{v_5}{v_{25}}
\end{psmallmatrix}.$$

\item $\begin{aligned}[t]w=s_\beta:\quad
v\in\{(&0,0,0,0,v_5,v_6; \\
&0,0,0,0,0,0,0,0,0,0,0,0,0,0,v_{56}; \\
&0,0,0,0,0,0,0,0,0,0,0,0,0,0,0,0,0,0,v_{356},v_{456}; \\
&0,0,0,0,0,0,0,0,0,0,0,0,0,0,v_{56}; \\
&0,0,0,0,-v_6,v_5)\,:\,v_6,v_{56}\in\Q_p^\times,v_5,v_{356},v_{456}\in\Q_p,v_{356}^2+v_{456}^2=v_{56}^2\}\end{aligned}$

\noindent Per set of coordinates, a representative matrix in $wCU^w$ is given by 
$$\begin{psmallmatrix}
    & \frac{1}{v_6} &&&& \\
    -\frac{v_6}{v_{56}} & \frac{v_5}{v_{56}} &&&& \\
    && \frac{v_{456}}{v_{56}} & -\frac{v_{356}}{v_{56}} && \\
    && \frac{v_{356}}{v_{56}} & \frac{v_{456}}{v_{56}} && \\
    &&&& v_5 & v_6 \\
    &&&& -\frac{v_{56}}{v_6} &
\end{psmallmatrix}=\begin{psmallmatrix}
    & 1 &&&& \\
    1 &&&&& \\
    && 1 &&& \\
    &&& 1 && \\
    &&&&& 1 \\
    &&&& 1 &
\end{psmallmatrix}\begin{psmallmatrix}
    -\frac{v_6}{v_{56}} & \frac{v_5}{v_{56}} &&&& \\
    & \frac{1}{v_6} &&&& \\
    && \frac{v_{456}}{v_{56}} & -\frac{v_{356}}{v_{56}} && \\
    && \frac{v_{356}}{v_{56}} & \frac{v_{456}}{v_{56}} && \\
    &&&& -\frac{v_{56}}{v_6} & \\
    &&&& v_5 & v_6
\end{psmallmatrix}.$$
\end{enumerate}

Let $\Gamma\coloneqq SO_{4,2}(\Z_p)$, $\Gamma_\infty\coloneqq U\cap\Gamma$, and $\Gamma_w\coloneqq U^w\cap \Gamma$ for $w\in W$. For a given $w\in {}_{\Q_p}W$, the double coset space $\Gamma_\infty\backslash(G_w\cap\Gamma)/\Gamma_w$ is well-defined (the proof of Lemma \ref{lemma:so33} stays the same, with the only modification being that $t\in C$). As before, it suffices to restrict our attention to the double coset spaces
$$R_w\coloneqq(UD\cap\Gamma)\backslash(G_w\cap\Gamma)/\Gamma_w ,\quad D\coloneqq C\cap\Gamma,$$
by Theorem \ref{prop:stevens} and Remark \ref{rmk:stevens_non_split}.

Note that Proposition \ref{prop:so42_inj} gives no information about the image of $\Gamma_\infty\backslash\Gamma\rightarrow U\backslash G\rightarrow\Q_p^{62}$. By Theorem \ref{thm:friedberg_coprimality}, for an orbit in $U\backslash G$ to contain an element of $\Gamma$, it is necessary that its Plücker coordinates $v=(v_1,\dotsc,v_{23456})\in\Z_p^{62}$ satisfy
\begin{align}\label{eq:so42_coprimality}
&\left(\{v_i\}_{1\leq i\leq 6}\right)=\left(\{v_{ij}\}_{1\leq i<j\leq 6}\right)=\left(\{v_{ijk}\}_{1\leq i<j<k\leq6}\right) \\
&\quad =\left(\{v_{ijkl}\}_{1\leq i<j<k<l\leq6}\right)=\left(\{v_{ijklm}\}_{1\leq i<j<k<l<m\leq6}\right)=1. \notag
\end{align}

\noindent Per relative Weyl element $w\in {}_{\Q_p}W$, given a vector $v\in\Z_p^{62}$ satisfying \eqref{eq:so42_coprimality} as well as the explicit description of $v$ for the given $w$ in the enumeration above, we will explicitly construct a matrix in $\Gamma$ that has $v$ as its Plücker coordinates in Section \ref{sec:so42_explicit}. For now, we already assume the existence of these integral matrices, so that we can fully parametrize the coset representatives of $R_w$ in terms of Plücker coordinates:

\begin{enumerate}
\item $w=\id$: We have $v_5,v_{56}\in\Z_p^\times$ and thus
$\begin{psmallmatrix}
    1 &&&&& \\
    & 1 &&&& \\
    && \frac{v_{456}}{v_{56}} & \frac{v_{356}}{v_{56}} && \\
    && -\frac{v_{356}}{v_{56}} & \frac{v_{456}}{v_{56}} && \\
    &&&& 1 & \\
    &&&&& 1
\end{psmallmatrix}\in D$, so
\begin{align*}
R_{\id}=\{(&0,0,0,0,1,0; \\
&0,0,0,0,0,0,0,0,0,0,0,0,0,0,1; \\
&0,0,0,0,0,0,0,0,0,0,0,0,0,0,0,0,0,0,0,1; \\
&0,0,0,0,0,0,0,0,0,0,0,0,0,0,1; \\
&0,0,0,0,0,1)\}.
\end{align*}

\item $w=s_\alpha$: We have $v_5\in\Z_p^\times$. To see that $\begin{psmallmatrix}
    1 &&&&& \\
    & 1 &&&& \\
    && \frac{v_{245}}{v_{25}} & -\frac{v_{235}}{v_{25}} && \\
    && \frac{v_{235}}{v_{25}} & \frac{v_{245}}{v_{25}} && \\
    &&&& 1 & \\
    &&&&& 1
\end{psmallmatrix}\in D$, keeping in mind that
\begin{equation}\label{eq:so42_orthogonality_alpha}
v_{235}^2+v_{245}^2=v_{25}^2
\end{equation} 
with $v_{25}\in\Z_p\backslash\{0\}$, we consider the following two cases:
\begin{itemize}
\item[-] If $\nu_p(v_{235})\neq\nu_p(v_{245})$, then the fact that $\frac{v_{235}}{v_{25}},\frac{v_{245}}{v_{25}}\in\Z_p$ is a consequence of \eqref{eq:so42_orthogonality_alpha}
by comparing $p$-adic valuations of both sides of the equation. 
\item[-] If $\nu_p(v_{235})=\nu_p(v_{245})$, write $v_{235}=cp^m$, $v_{245}=dp^m$ for some $c,d\in \Z_p^\times$, $m\coloneqq\nu_p(v_{235})\in\Z_{\geq0}$. If $m<\nu_p(v_{25})$, then \eqref{eq:so42_orthogonality_alpha} would imply $c^2+d^2\equiv0\,(\mo p^2)$, which is impossible since $p\not\equiv1\,(\mo 4)$.
\end{itemize}
Moreover, by the expression of $\Gamma_w$, we have
\begin{align*}
R_{s_\alpha} =\{(&0,0,0,0,1,0; \\
&0,0,0,0,0,0,0,p^r,0,0,v_{35},0,v_{45},0,\frac{v_{35}^2+v_{45}^2}{2p^r}; \\
&0,0,0,0,0,0,0,0,0,0,0,0,0,p^r,0,v_{45},v_{35},0,\frac{v_{35}v_{45}}{p^r},-\frac{v_{35}^2-v_{45}^2}{2p^r}; \\
&0,0,0,0,0,0,0,0,0,0,-p^r,0,-v_{45},v_{35},\frac{v_{35}^2+v_{45}^2}{2p^r}; \\
&0,0,0,0,0,1)\},
\end{align*}
for some $r\in\Z_{\geq0}$, $v_{35},v_{45}\,(\mo p^r)$, such that $2p^r\vert v_{35}^2+v_{45}^2,v_{35}^2-v_{45}^2$ as well as $p^r\vert v_{35}v_{45}$ and 
\begin{align*}
\left(p^r,v_{35},v_{45},\frac{v_{35}^2+v_{45}^2}{2p^r}\right)=\bigg(p^r,v_{35},v_{45},\frac{v_{35}v_{45}}{p^r},\frac{v_{35}^2-v_{45}^2}{2p^r}\bigg)=1.
\end{align*}

\item $w=s_\beta$: We have $v_{56}\in\Z_p^\times$ and thus $\begin{psmallmatrix}
    1 &&&&& \\
    & 1 &&&& \\
    && \frac{v_{456}}{v_{56}} & \frac{v_{356}}{v_{56}} && \\
    && -\frac{v_{356}}{v_{56}} & \frac{v_{456}}{v_{56}} && \\
    &&&& 1 & \\
    &&&&& 1
\end{psmallmatrix}\in D$. Moreover, by the expression of $\Gamma_w$, we have
\begin{equation*}
\begin{split}
R_{s_\beta} = \{(&0,0,0,0,v_5,p^r; \\
&0,0,0,0,0,0,0,0,0,0,0,0,0,0,1; \\         
&0,0,0,0,0,0,0,0,0,0,0,0,0,0,0,0,0,0,0,1; \\
&0,0,0,0,0,0,0,0,0,0,0,0,0,0,1; \\
&0,0,0,0,-p^r,v_5)
\end{split}
\end{equation*}
for some $r\in\Z_{\geq0}$, $v_5\,(\mo p^r)$, such that, if $r>0$, then $v_5\in\Z_p^\times$.
\end{enumerate}

\subsection{Explicit description}\label{sec:so42_explicit}
Characters of $U$ trivial on $\Gamma$ must be of the form $\psi\coloneqq\psi_{m_1,m_2,m_3}$, $\psi'\coloneqq\psi_{n_1,n_2,n_3}$, with $m_1,m_2,m_3,n_1,n_2,n_3\in\Z_p$, where
$$\psi_{m_1,m_2,m_3}\begin{pmatrix}
    1 & x & * & * & * & * \\
    & 1 & y & z & * & * \\
    && 1 && * & -y \\
    &&& 1 & * & -z \\
    &&&& 1 & \\
    &&&& -x & 1
\end{pmatrix}\coloneqq e(m_1x+m_2y+m_3z)$$
and similarly for $\psi_{n_1,n_2,n_3}$. Per relative Weyl element $w\in{}_{\Q_p}W$, we write down the explicit Kloosterman sum:
\begin{enumerate}
\item The only Weyl element of length $0$ is $w=\id$. We have $n_{\id}=I_6$ (up to units) and the Kloosterman sum
$$\kl_p(\psi,\psi';n_{\id})=1$$
is trivial because $R_w$ has only one element, represented by the identity matrix.

\item Regarding the Weyl elements of length $1$, we obtain a classical Kloosterman sum for $w=s_\beta$: 
\begin{align*}
&n_{s_\beta,r}\coloneqq\begin{psmallmatrix}
    & p^{-r} &&&& \\
    -p^r &&&&& \\
    && 1 &&& \\
    &&& 1 && \\
    &&&&& p^r \\
    &&&& -p^{-r} &
\end{psmallmatrix},\quad r\in\Z_{\geq0}, \\
&\kl_p(\psi,\psi';n_{s_\beta,r}) = S(m_1,n_1;p^r)
\end{align*}

\noindent This was to be expected, since the long Weyl element 
$$w=\begin{psmallmatrix}
    & 1 && \\
    1 &&& \\
    &&& 1 \\
    && 1 &
\end{psmallmatrix}$$
of $SO_{2,2}(\Q_p)$ also results in a classical Kloosterman sum.\newline

\noindent The Kloosterman sum attached to $w=s_\alpha$ is more intricate. From Section \ref{sec:so42_plucker}, we know that a representative in $G_w$ for an arbitrary element $v=(v_1,\dotsc,v_{23456})\in R_w$ is given by
$$A\coloneqq\begin{psmallmatrix}
    1 &&&&& \\
    &&&&& -\frac{1}{p^r} \\
    && 1 &&& -\frac{v_{35}}{p^r} \\
    &&& -1 && \frac{v_{45}}{p^r} \\
    &&&& 1 & \\
    & -p^r & -v_{35} & -v_{45} && \frac{v_{35}^2+v_{45}^2}{2p^r}
\end{psmallmatrix},$$
where $r\in\Z_{\geq0}$, $v_{35},v_{45}\,(\mo p^r)$, such that $\frac{v_{35}v_{45}}{p^r},\frac{v_{35}^2+v_{45}^2}{2p^r},\frac{v_{35}^2-v_{45}^2}{2p^r}\in\Z_p$ as well as
\begin{align}\label{eq:so42_coprimality_alpha}
\left(p^r,v_{35},v_{45},\frac{v_{35}^2+v_{45}^2}{2p^r}\right)=\bigg(p^r,v_{35},v_{45},\frac{v_{35}v_{45}}{p^r},\frac{v_{35}^2-v_{45}^2}{2p^r}\bigg)=1.
\end{align}

\noindent Assume for simplicity that $r>0$; otherwise we obtain a trivial Kloosterman sum anyway. Write $v_{35}=ap^k$, $v_{45}=bp^l$ for some $a\,(\mo p^{r-k})$, $b\,(\mo p^{r-l})$, $a,b\in\Z_p^\times\cup\{0\}$, and $0\leq l,k<r$.\newline

\noindent For $p\equiv3\,(\mo 4)$, if $a\neq0\neq b$, then the coprimality conditions \eqref{eq:so42_coprimality_alpha} reduce to $r=2\min\{k,l\}$, if $a=0\neq b$ to $r=2l$, and if $a\neq 0=b$ to $r=2k$. For $p=2$, if $a\neq0\neq b$, then the coprimality conditions \eqref{eq:so42_coprimality_alpha} reduce to $r=2\min\{k,l\}-1$ if $k\neq l$ and to $r=2k$ if $k=l$,\footnote{The sum of two squares in $\Z_2^\times$ is always $\equiv 2\,(\mo 8)$.} if $a=0\neq b$ to $r=2l-1$, and if $a\neq0=b$ to $r=2k-1$. Note that thus, for any $p\not\equiv1\,(\mo 4)$, the condition $\frac{a^2p^{2k}+b^2p^{2l}}{2p^{r}}\in\Z_p^\times$ is automatically satisfied.\newline

\noindent For every prime $p\not\equiv1\,(\mo 4)$, we now have
\begin{align*}
&\begin{psmallmatrix}
    1 &&&&& \\
    & 1 & -\frac{2ap^k}{a^2p^{2k}+b^2p^{2l}} & \frac{2bp^l}{a^2p^{2k}+b^2p^{2l}} && -\frac{2}{a^2p^{2k}+b^2p^{2l}} \\
    && 1 &&& \frac{2ap^k}{a^2p^{2k}+b^2p^{2l}} \\
    &&& 1 && -\frac{2bp^l}{a^2p^{2k}+b^2p^{2l}} \\
    &&&& 1 & \\
    &&&&& 1
\end{psmallmatrix}A = \begin{psmallmatrix}
    1 &&&&& \\
    & \frac{2p^r}{a^2p^{2k}+b^2p^{2l}} &&&& \\
    & -\frac{2ap^{k+r}}{a^2p^{2k}+b^2p^{2l}} & -\frac{a^2p^{2k}-b^2p^{2l}}{a^2p^{2k}+b^2p^{2l}} & -\frac{2abp^{k+l}}{a^2p^{2k}+b^2p^{2l}} && \\
    & \frac{2bp^{l+r}}{a^2p^{2k}+b^2p^{2l}} & \frac{2abp^{k+l}}{a^2p^{2k}+b^2p^{2l}} & -\frac{a^2p^{2k}-b^2p^{2l}}{a^2p^{2k}+b^2p^{2l}} && \\
    &&&& 1 & \\
    & -p^r & -ap^k & -bp^l && \frac{a^2p^{2k}+b^2p^{2l}}{2p^r}
\end{psmallmatrix} \\
&=\begin{psmallmatrix}
    1 &&&&& \\
    & 1 & -\frac{2ap^k}{a^2p^{2k}+b^2p^{2l}} & \frac{2bp^l}{a^2p^{2k}+b^2p^{2l}} && -\frac{2}{a^2p^{2k}+b^2p^{2l}} \\
    && 1 &&& \frac{2ap^k}{a^2p^{2k}+b^2p^{2l}} \\
    &&& 1 && -\frac{2bp^l}{a^2p^{2k}+b^2p^{2l}} \\
    &&&& 1 & \\
    &&&&& 1
\end{psmallmatrix}\begin{psmallmatrix}
    1 &&&&& \\
    &&&&& 1 \\
    && 1 &&& \\
    &&& -1 && \\
    &&&& 1 & \\
    & 1 &&&&
\end{psmallmatrix}\begin{psmallmatrix}
    1 &&&&& \\
    & -p^r &&&& \\
    && 1 &&& \\
    &&& 1 && \\
    &&&& 1 & \\
    &&&&& -\frac{1}{p^r}
\end{psmallmatrix}\begin{psmallmatrix}
    1 &&&&& \\
    & 1 & \frac{ap^k}{p^r} & \frac{bp^l}{p^r} && -\frac{a^2p^{2k}+b^2p^{2l}}{2p^{2r}} \\
    && 1 &&& -\frac{ap^k}{p^r} \\
    &&& 1 && -\frac{bp^l}{p^r} \\
    &&&& 1 & \\
    &&&&& 1
\end{psmallmatrix}.
\end{align*}

\noindent We conclude that 
$$n_{s_\alpha,r}\coloneqq\begin{psmallmatrix}
    1 &&&&& \\
    &&&&& -p^{-r} \\
    && 1 &&& \\
    &&& -1 && \\
    &&&& 1 & \\
    & -p^r &&&&
\end{psmallmatrix}$$
for some $r\in\Z_{\geq0}$. For $p\equiv3\,(\mo 4)$, $r$ is even and we obtain the Kloosterman sum
\begin{align}
\kl_p(\psi,\psi';n_{s_\alpha,r}) &= \sum\limits_{\substack{a,b\,(\mo p^r) \\ (a,b,p^{r})=p^{r/2}}}e\left(-2\frac{m_2a-m_3b}{a^2+b^2}\right)e\left(\frac{n_2a+n_3b}{p^r}\right) \\
&=\sum\limits_{\substack{a,b\,(\mo p^{r/2}) \\ (a,b,p)=1}}e\left(-2\frac{(m_2a-m_3b)(\overline{a^2+b^2})}{p^{r/2}}\right)e\left(\frac{n_2a+n_3b}{p^{r/2}}\right). \label{eq:so42_p_odd}
\end{align}

\noindent For $p=2$, the terms are the same as for $p\equiv3\,(\mo 4)$, but the values of $a,b\,(\mo p^r)$ that are being summed over depend on the parity of $r$:
\begin{align}
\kl_p(\psi,\psi';n_{s_\alpha,r}) &= \begin{cases}
\sum\limits_{\substack{a,b\,(\mo p^r) \\ (a,b,p^r)=p^{(r+1)/2} \\ p^{(r+3)/2}\vert a\text{ or }p^{(r+3)/2}\vert b}}e\left(-p\frac{m_2a-m_3b}{a^2+b^2}\right)e\left(\frac{n_2a+n_3b}{p^r}\right)    &\text{ if } \text{$r$ is odd, $r>1$}, \\ 
& \\
\sum\limits_{\substack{a,b\,(\mo p^r) \\ (a,p^r)=(b,p^r)=p^{r/2}}}e\left(-p\frac{m_2a-m_3b}{a^2+b^2}\right)e\left(\frac{n_2a+n_3b}{p^r}\right)    &\text{ if } \text{$r$ is even}
\end{cases} \nonumber \\
&= \begin{cases}
\sum\limits_{\substack{a,b\,(\mo p^{(r-1)/2}) \\ (a,b,p)=1 \\ p\vert ab}}e\left(-\frac{(m_2a-m_3b)(\overline{a^2+b^2})}{p^{(r-1)/2}}\right)e\left(\frac{n_2a+n_3b}{p^{(r-1)/2}}\right) &\text{ if $r$ is odd, $r>1$}, \\ 
& \\
\sum\limits_{\substack{a,b\,(\mo p^{r/2}) \\ (ab,p)=1}}e\left(-\frac{(m_2a-m_3b)\overline{(a^2+b^2)p^{-1}}}{p^{r/2}}\right)e\left(\frac{n_2a+n_3b}{p^{r/2}}\right) &\text{ if $r$ is even}
\end{cases}. \label{eq:so42_p=2}
\end{align}
Note that the conditions on $a,b$ in the first sum ensure that $a^2+b^2$ is invertible modulo $p^{(r-1)/2}$, whereas the conditions on $a,b$ in the second sum ensure that $(a^2+b^2)p^{-1}$ is invertible modulo $p^{r/2}$.\newline

\noindent If $p=2,r=1$, then $R_w=\emptyset$, so the resulting Kloosterman sum is $0$.
\end{enumerate}

\subsection{Bounds}\label{sec:so42_bounds}
The only non-trivial and non-classical $SO_{4,2}$ Kloosterman sum obtained in the previous section is the one attached to $w=s_\alpha$. We denote the sum by $S_4(m_2,m_3,n_2,n_3;p^r)$ as in \eqref{eq:so42_sum}.

A trivial bound for $S_4(m_2,m_3,n_2,n_3;p^r)$ is given by $p^r$. It is not too difficult to obtain the non-trivial bound $p^{3r/4}$ if $(m_2m_3n_2n_3,p)=1$: one splits the sum according to whether $a$ or $b$ is a unit and then one applies Weil's bound. Such a strategy however does not optimally account for cancellation among the inner sums into which $S_4$ is decomposed. To make up for this, we apply a $p$-adic stationary phase method and the following result of Hooley, which is a consequence of Deligne's theorem on the Riemann hypothesis for $L$-functions of algebraic varieties over finite fields.

\begin{theorem}[{\cite[Theorem 5]{H}}]\label{thm:hooley}
Let $f(x_1,x_2,x_3),g(x_1,x_2,x_3)\in\F_p[x_1,x_2,x_3]$ be such that the variety $V=(g)$ is absolutely irreducible. For each $t\in \F_p$, assume that the variety $W(t)$ defined by
$$\begin{cases}
f(x_1,x_2,x_3)-t &= 0 \\
g(x_1,x_2,x_3) &= 0
\end{cases}$$
is such that 
\begin{enumerate}
\item generically $W(t)$ is an absolutely irreducible curve,
\item $W(t)$ is a curve (possibly reducible) or the zero variety for all specializations of $t$ to $\overline{\F}_p$.
\end{enumerate}
Then 
$$\sum_{\substack{x_1,x_2,x_3\,(\mo p) \\ g(x_1,x_2,x_3)\equiv0\,(\mo p)}}e\left(\frac{f(x_1,x_2,x_3)}{p}\right)=O(p),$$
where the implicit constant depends at most on $\deg(f)$ and $\deg(g)$.
\end{theorem}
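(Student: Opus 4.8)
The plan is to recast the sum $\ell$-adically and invoke Deligne's theory; the key point is that hypotheses (1) and (2) concentrate the relevant cohomology in a single degree, so that the a priori estimate $O(p^{3/2})$ for a two-dimensional exponential sum sharpens to $O(p)$. Fix $\ell\neq p$ and let $\mathcal{L}_\psi$ be the Artin--Schreier sheaf on $\A^1_{\F_p}$ attached to $\psi(x)=e(x/p)$; then $\mathcal{L}:=f^*\mathcal{L}_\psi$ is a lisse rank-one $\Q_\ell$-sheaf, pure of weight $0$, on the affine surface $V=(g)\subseteq\A^3_{\F_p}$, which is absolutely irreducible by hypothesis. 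Since hypothesis (2) forbids any $W(t)$ from being two-dimensional, $f|_V$ is non-constant, so $\mathcal{L}$ has non-trivial geometric monodromy. The sum in the theorem is $\sum_i(-1)^i\tr\big(\fro_p\mid H^i_c(V_{\overline{\F}_p},\mathcal{L})\big)$. By Artin's affine vanishing theorem applied to the smooth locus of $V$, together with the localization sequence along the at most one-dimensional singular locus, $H^0_c$ and $H^1_c$ have dimension $O_{\deg f,\deg g}(1)$ and weight $\le 1$, so they contribute $O(\sqrt p)$; and since $f|_V$ is non-constant and $V$ is irreducible, $H^4_c(V,\mathcal{L})=(\mathcal{L}_{\overline\eta})_{\pi_1(V)}(-2)=0$. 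By Weil~II, $H^2_c(V,\mathcal{L})$ is mixed of weight $\le 2$, with dimension $O_{\deg f,\deg g}(1)$ by the standard bounds on $\ell$-adic Betti numbers in terms of degrees; hence $\tr(\fro_p\mid H^2_c)=O(p)$. Everything therefore comes down to proving $H^3_c(V,\mathcal{L})=0$.

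For this I use the Leray spectral sequence for $\pi:=f|_V\colon V\to\A^1$, which by the projection formula reads $E_2^{a,b}=H^a_c\big(\A^1,\mathcal{L}_\psi\otimes R^b\pi_!\underline{\Q}_\ell\big)\Rightarrow H^{a+b}_c(V,\mathcal{L})$ and degenerates at $E_3$ (as $\A^1$ is an affine curve). The boxes with $a+b=3$ are $(0,3),(1,2),(2,1)$. Hypothesis (2) (every $W(t)$ is a curve or empty) yields $R^0\pi_!\underline{\Q}_\ell=0$ and $R^b\pi_!\underline{\Q}_\ell=0$ for $b\ge 3$, killing $E_2^{0,3}$. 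Hypothesis (1) (the geometric generic fibre is irreducible) makes $R^2\pi_!\underline{\Q}_\ell$ agree with $\underline{\Q}_\ell(-1)$ off a finite set and fit into $0\to\underline{\Q}_\ell(-1)\to R^2\pi_!\underline{\Q}_\ell\to\mathcal{Q}\to 0$ with $\mathcal{Q}$ punctual; tensoring with $\mathcal{L}_\psi$ and using the classical vanishing $H^1_c(\A^1,\mathcal{L}_\psi(-1))=0$ (from $\chi_c(\A^1,\mathcal{L}_\psi)=1-\operatorname{Sw}_\infty(\mathcal{L}_\psi)=0$ and $\psi\neq 1$), together with $H^1_c(\text{punctual})=0$, kills $E_2^{1,2}$. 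The last box $E_2^{2,1}=H^2_c(\A^1,\mathcal{L}_\psi\otimes R^1\pi_!\underline{\Q}_\ell)$ is the heart of the matter. Over a dense open $U\subseteq\A^1$ on which the fibres are smooth geometrically irreducible curves, $R^1\pi_!\underline{\Q}_\ell|_U$ carries a two-step weight filtration: the weight-$1$ quotient is the relative $H^1$ of the smooth compactified fibres, pure of weight $1$ and so with no weight-$0$ quotient; the weight-$0$ sub is $\operatorname{coker}(\underline{\Q}_\ell\to s_*\underline{\Q}_\ell)$, where $s$ is the finite map sending the points of the smooth model of $\overline{W(t)}$ lying over $\overline{W(t)}\cap\{X_0=0\}$ to $t$. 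Here is the key: the set $\overline{W(t)}\cap\{X_0=0\}$ is contained in the finite set cut out in the hyperplane at infinity of $\P^3$ by the leading forms of $g$ and of $f-t$; but $\operatorname{lead}(f-t)=\operatorname{lead}(f)$ does not depend on $t$, so this finite configuration --- and hence, after shrinking $U$, the cover $s$ --- is constant, making the weight-$0$ part of $R^1\pi_!\underline{\Q}_\ell|_U$ a constant sheaf. Consequently no constituent of $R^1\pi_!\underline{\Q}_\ell|_U$ is isomorphic to $\mathcal{L}_\psi^{\vee}$, which is geometrically non-trivial; thus $\mathcal{L}_\psi\otimes R^1\pi_!\underline{\Q}_\ell|_U$ has no trivial geometric quotient, its Tate-twisted coinvariants vanish, and since $H^2_c(\A^1,-)$ of a constructible sheaf depends only on its restriction to a dense open, $E_2^{2,1}=0$. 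Hence $H^3_c(V,\mathcal{L})=0$ and the sum is $O(p)$, with implied constant depending only on $\deg f$ and $\deg g$ through the Betti-number bounds and Grothendieck--Ogg--Shafarevich.

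The real content is the vanishing $E_2^{2,1}=0$ --- the assertion that the sum has no ``secondary main term'' of size $p^{3/2}$ --- which is where hypothesis (1) is genuinely needed: dropping absolute irreducibility of the generic fibre, as in $g=x_3$, $f=x_1^2$ (where $\{f=t\}$ splits into two lines over $\overline{\F}_p$), produces a sum of true size $p^{3/2}$, and the elementary fibration estimate $\#W(t)(\F_p)=m(t)\,p+O_{\deg f,\deg g}(\sqrt p)$, with $m(t)$ the number of $\F_p$-rational geometrically irreducible components of $W(t)$, only yields $O(p^{3/2})$ by itself. The main obstacle is therefore to make the weight-filtration analysis of $R^1\pi_!\underline{\Q}_\ell$ rigorous and uniform: choosing $U$, handling the finitely many bad fibres in $\A^1\setminus U$ (each contributing $O(p)$ to the trace by Weil's bound for curves), and checking that all Swan conductors and Betti numbers appearing are bounded purely in terms of $\deg f$ and $\deg g$, so that the implied constant is independent of $p$.
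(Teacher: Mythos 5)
First, a point of order: the paper does not prove this statement. It is quoted as an external black box from Hooley \cite{H} and used only in the proof of Theorem \ref{thm:so42_bounds}, so there is no in-paper argument to compare yours against. On its own terms, your outline has the right architecture --- Grothendieck--Lefschetz, Weil II, and the Leray spectral sequence for $\pi=f|_V:V\to\A^1$ --- and you correctly isolate the real difficulty: the sum is trivially $O(p^{3/2})$ by fibering and applying the Weil bound fibrewise, and the whole content of the theorem is that $H^3_c(V_{\overline{\F}_p},\mathcal{L})$ contributes no weight-$3$ term.

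That decisive step, however, is not actually established. (a) Your vanishing of $E_2^{1,2}$ invokes $H^1_c(\A^1,\mathcal{L}_\psi)=0$, but after passing to the open set $U\subseteq\A^1$ where the fibre is irreducible and non-empty, the relevant group is $H^1_c(U,\mathcal{L}_\psi(-1))\cong H^0(\A^1\setminus U,\mathcal{L}_\psi)(-1)$, which is non-zero whenever $U\neq\A^1$; this happens to be harmless (it has weight $2$ and bounded dimension), but the claimed vanishing is false as stated. (b) For the crucial term $E_2^{2,1}$, the weight-$0$ part of $R^1\pi_!\Q_\ell$ for a family of \emph{singular} affine curves receives contributions not only from the boundary at infinity but also from the singular points of the fibres (the cokernel of $\Q_\ell\to\nu_*\Q_\ell$ for the fibrewise normalization $\nu$), and the singular locus certainly varies with $t$; your analysis omits this entirely. (c) Even for the boundary: constancy of the \emph{support} $\overline{W(t)}\cap\{X_0=0\}$ (cut out by leading forms independent of $t$) does not give constancy of the finite cover you need, namely the points of the normalization lying above it --- the local branches of $\overline{W(t)}$ at a fixed point at infinity can undergo monodromy in $t$. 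Note also that $\mathcal{L}_{\bar\psi}$ has finite geometric monodromy of order $p$, so it is precisely the kind of sheaf that \emph{can} occur inside $s_*\Q_\ell$ for a finite cover $s$; the escape is that $\deg s$ is bounded in terms of $\deg f,\deg g$, so $\Z/p$ does not embed in the relevant symmetric group once $p$ is large, and you would need to say this. (d) Finally, for the weight-$1$ graded piece, ``pure of weight $1$, hence no weight-$0$ quotient'' rules out \emph{arithmetic} quotients, but $H^2_c(\A^1,-)$ computes \emph{geometric} coinvariants, and geometric constituents of a pure sheaf need not be defined over $\F_p$, so purity alone does not exclude $\mathcal{L}_{\bar\psi}$ as a geometric quotient of $\mathrm{gr}^W_1R^1\pi_!\Q_\ell$. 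This is exactly where genuine input (a Swan-conductor comparison at infinity, or a Katz-style diophantine criterion) must enter, and it is missing. In short: correct skeleton, but the heart of the proof --- the exclusion of a weight-$3$ eigenvalue --- is not proved.
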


\begin{proof}[Proof of Theorem \ref{thm:so42_bounds}]
Consider first the case $p\equiv3\,(\mo 4)$ and $r=2$, in which case we apply Theorem \ref{thm:hooley} above. Following the notation of the theorem, we take $x_1=a$, $x_2=b$, $x_3\equiv\overline{a^2+b^2}\,(\mo p)$, $f(x_1,x_2,x_3)=-2(m_2x_1-m_3x_2)x_3+n_2x_1+n_3x_2$, and $g(x_1,x_2,x_3)=(x_1^2+x_2^2)x_3-1$. Clearly $V=(g)$ is irreducible. We now show that the other necessary condition holds. Define $K$ to be the algebraic closure of $\F_p[t]$. Then the affine scheme $W(t)$ corresponds to the coordinate ring
$$\frac{K[x_1,x_2,x_3]}{(f-t,g)} = \frac{K[x_1,x_2,\overline{x_1^2+x_2^2}]}{(h-t)},$$
where $h$ results from substituting $x_3=\overline{x_1^2+x_2^2}$ in $f$. To see that $W(t)$ is irreducible, it suffices to show that the coordinate ring is an integral domain, i.e. we need to check that $r(x_1,x_2)\coloneqq(x_1^2+x_2^2)(n_2x_1+n_3x_2-t)-2(m_2x_1-m_3x_2)\in K[x_1,x_2]$ is irreducible. First of all, assume that $(m_2,m_3,p)=(n_2,n_3,p)=1$. Write $r=R_1+R_2+R_3$ with $R_i$ homogeneous in $x_1,x_2$ of degree $i$. If $r=st$ for non-units $s,t$, then we can have $s=S_0+S_1,t=T_1+T_2$ or $s=S_1,t=T_0+T_1+T_2$. The first case implies that $S_1$ shares a factor with $T_2$ and $x_1^2+x_2^2$, which is impossible because $S_1T_2=(x_1^2+x_2^2)(n_2x_1+n_3x_2)$ has no repeated factors. In the second case, we can assume wlog. that $s=m_2x_1-m_3x_2$ and then the only problem arises when $t=0$ and $n_2x_1+n_3x_2$ is proportional to $s$. To prevent this from happening, we assume that $(m_2n_3+m_3n_2,p)=1$.

For $p\equiv3\,(\mo 4)$ and $r>2$, we use the congruence relation
\begin{align*}
&\overline{(a+p^tx)^2+(b+p^ty)^2} \equiv \overline{(a^2+b^2)+2(ax+by)p^t+(x^2+y^2)p^{2t}} \,(\mo p^{3t}) \\
&\equiv \overline{a^2+b^2} - (2(ax+by)+(x^2+y^2)p^t)(\overline{a^2+b^2})^2p^t + (2(ax+by)+(x^2+y^2)p^t)^2(\overline{a^2+b^2})^3p^{2t} \,(\mo p^{3t}) \\
&\equiv \overline{a^2+b^2} - (2(ax+by)+(x^2+y^2)p^t)(\overline{a^2+b^2})^2p^t + 4(ax+by)^2(\overline{a^2+b^2})^3p^{2t} \,(\mo p^{3t})
\end{align*}
for any $t\in\Z_{\geq1}$, $a,b\in\Z_p^\times$ and $x,y\in\Z_p$. If $r=4t$ for some $t\in\Z_{\geq1}$, then
\begin{align}
&S_4(m_2,m_3,n_2,n_3;p^r) \\
&= \sum_{\substack{a,b\,(\mo p^t) \\ (a,b,p)=1}}\sum_{x=0}^{p^t-1}\sum_{y=0}^{p^t-1}e\left(\frac{-2(m_2(a+p^tx)-m_3(b+p^ty))(\overline{a^2+b^2})(1-2(ax+by)(\overline{a^2+b^2})p^t)}{p^{2t}}\right) \\
&\quad\quad\quad\,\,\,\,\,\,\quad\quad\quad\quad\quad\cdot e\left(\frac{n_2(a+p^tx)+n_3(b+p^ty)} {p^{2t}}\right) \\
&= \sum_{\substack{a,b\,(\mo p^t) \\ (a,b,p)=1}}e\left(\frac{-2(m_2a-m_3b)(\overline{a^2+b^2})+n_2a+n_3b}{p^{2t}}\right) \\
&\quad\quad\quad\quad\quad\,\, \cdot\sum_{x=0}^{p^t-1}e\left(\frac{(-2m_2(\overline{a^2+b^2})+4(m_2a-m_3b)a(\overline{a^2+b^2})^2+n_2)x}{p^t}\right) \label{eq:sum_of_the_form_1} \\
&\quad\quad\quad\quad\quad\,\, \cdot\sum_{y=0}^{p^t-1}e\left(\frac{(2m_3(\overline{a^2+b^2})+4(m_2a-m_3b)b(\overline{a^2+b^2})^2+n_3)y}{p^t}\right) \label{eq:sum_of_the_form_2} \\
&= p^{2t}\sum_{\substack{a,b\,(\mo p^{t}) \\ (a,b,p)=1 \\ f(a,b)\equiv g(a,b)\equiv0\,(\mo p^t)}}e\left(\frac{-2(m_2a-m_3b)(\overline{a^2+b^2}) + n_2a+n_3b}{p^{2t}}\right).
\end{align}
The last step uses the fact that the first inner sum \eqref{eq:sum_of_the_form_1} vanishes unless $a,b$ are such that
\begin{equation}\label{eq:f(a,b)}
f(a,b)\coloneqq -2m_2(\overline{a^2+b^2})+4(m_2a-m_3b)a(\overline{a^2+b^2})^2+n_2\equiv0\,(\mo p^t),
\end{equation}
and similarly the second inner sum \eqref{eq:sum_of_the_form_2} vanishes unless $a,b$ are such that
\begin{equation}\label{eq:g(a,b)}
g(a,b)\coloneqq2m_3(\overline{a^2+b^2})+4(m_2a-m_3b)b(\overline{a^2+b^2})^2+n_3\equiv0\,(\mo p^t).
\end{equation}
It thus suffices to estimate the number of $a,b\,(\mo p^t)$ with $(a,b,p)=1$ for which $f(a,b)\equiv g(a,b)\equiv0\,(\mo p^t)$. To answer this question, suppose that $(a,b)$ is such a solution with $(a,p)=1$, and define the new variable $c=a^{-1}b$. Then
\begin{align*}
\Tilde{f}(a,c) &\coloneqq f(a,b)(a^2+b^2)^2a^{-2} \equiv n_2(1+c^2)^2a^2 - 2(m_2c^2+2m_3c-m_2) \,(\mo p^t), \\
\Tilde{g}(a,c) &\coloneqq g(a,b)(a^2+b^2)^2a^{-2} \equiv n_3(1+c^2)^2a^2 -2(m_3c^2-2m_2c-m_3) \,(\mo p^t)
\end{align*}
are both linear in $a^2$, so $a^2$ can be uniquely expressed in terms of $c$ if $(n_2n_3,p)=1$. Solving $n_3\Tilde{f}(a,c)-n_2\Tilde{g}(a,c)\equiv -2(m_2n_3-m_3n_2)c^2-4(m_2n_2+m_3n_3)c+2(m_2n_3-m_3n_2)\equiv0\,(\mo p^t)$ for $c$ gives a bounded number of solutions unless the polynomial equation is completely degenerate. Each such solution $c$ yields at most $2$ solutions for $a$. We thus conclude that $S_4(m_2,m_3,n_2,n_3;p^r)\ll p^{2t}= p^{r/2}$. On the other hand, if $r=4t+2$ for some $t\in\Z_{\geq1}$, then similarly 
\begin{align*}
&S_4(m_2,m_3,n_2,n_3;p^r) \\
&= \sum_{\substack{a,b\,(\mo p^{t+1}) \\ (a,b,p)=1}}\sum_{x=0}^{p^t-1}\sum_{y=0}^{p^t-1}e\left(\frac{-2(m_2(a+p^{t+1}x)-m_3(b+p^{t+1}y))(\overline{a^2+b^2})(1-2(ax+by)(\overline{a^2+b^2})p^{t+1})}{p^{2t+1}}\right) \\
&\quad\quad\quad\quad\quad\quad\quad\quad\quad\quad\cdot e\left(\frac{n_2(a+p^{t+1}x)+n_3(b+p^{t+1}y)} {p^{2t+1}}\right) \\
&= p^{2t} \sum_{\substack{a,b\,(\mo p^{t+1}) \\ (a,b,p)=1 \\ f(a,b)\equiv g(a,b)\equiv 0\,(\mo p^t)}} e\left(\frac{-2(m_2a-m_3b)(\overline{a^2+b^2})+n_2a+n_3b}{p^{2t+1}}\right) \\
&= p^{2t}\sum_{\substack{a,b\,(\mo p^{t}) \\ (a,b,p)=1 \\ f(a,b)\equiv g(a,b)\equiv 0\,(\mo p^t)}} \sum_{x=0}^{p-1}\sum_{y=0}^{p-1} e\Bigg(\frac{-2(m_2(a+p^tx)-m_3(b+p^ty))(\overline{a^2+b^2})}{p^{2t+1}}\\
&\quad\quad\quad\quad\quad\quad\quad\quad\quad\quad\quad\quad\quad\quad\quad\quad\,\,\cdot (1-(2(ax+by)+(x^2+y^2)p^t)(\overline{a^2+b^2})p^t+4(ax+by)^2(\overline{a^2+b^2})^2p^{2t})\Bigg) \\
&\quad\quad\quad\quad\quad\quad\quad\quad\quad\quad\quad\quad\quad\,\quad\,\,\,\cdot e\left(\frac{n_2(a+p^tx)+n_3(b+p^ty)}{p^{2t+1}}\right) \\
&= p^{2t} \sum_{\substack{a,b\,(\mo p^t) \\ (a,b,p)=1 \\ f(a,b)\equiv g(a,b)\equiv0\,(\mo p^t)}} e\left(\frac{-2(m_2a-m_3b)(\overline{a^2+b^2})+n_2a+n_3b}{p^{2t+1}}\right) \\
&\quad\quad\quad\quad \cdot\sum_{x=0}^{p-1}\sum_{y=0}^{p-1} e\left(\frac{(6m_2a-2m_3b-8a^2(m_2a-m_3b)(\overline{a^2+b^2}))(\overline{a^2+b^2})x^2}{p}\right) \\
&\quad\quad\quad\quad\quad\quad\quad\,\,\,\,\,\cdot e\left(\frac{(4m_2b-4m_3a-16ab(m_2a-m_3b)(\overline{a^2+b^2}))(\overline{a^2+b^2})xy}{p}\right) \\
&\quad\quad\quad\quad\quad\quad\quad\,\,\,\,\,\cdot e\left(\frac{(2m_2a-6m_3b-8b^2(m_2a-m_3b)(\overline{a^2+b^2}))(\overline{a^2+b^2})y^2}{p}\right),
\end{align*}
with $f(a,b)$ and $g(a,b)$ as in \eqref{eq:f(a,b)} and \eqref{eq:g(a,b)}. \noindent As in the case $r=4t$, there are at most $\ll1$ solutions $(a,b)\in(\Z/p^t\Z)^{\oplus 2}$ to the congruence $f(a,b)\equiv g(a,b)\equiv0\,(\mo p^t)$ and such that $(a,b,p)=1$. The terms of the two inner sums involve a quadratic form $Ax^2+Bxy+Cy^2$, which can be diagonalized if and only if its discriminant $B^2-4AC\equiv16(m_2^2+m_3^2)(\overline{a^2+b^2})$ is non-zero modulo $p$. By \cite[Theorem 1.5.2]{BEW}, the two decoupled sums over $x$ and $y$ are then both of size at most $\sqrt{p}$. We conclude that $S_4(m_2,m_3,n_2,n_3;p^r)\ll p^{2t+1}=p^{r/2}$.

It remains to treat the case $p=2$. Suppose first that $r>1$ is odd. If $r=4t+1$ for some $t\in \Z_{\geq1}$, we obtain (as above) that
$$S_4(m_2,m_3,n_2,n_3;p^r) = p^{2t}\sum_{\substack{a,b\,(\mo p^t) \\ (a,b,p)=1 \\ p\vert ab \\ f(a,b)\equiv g(a,b)\equiv0\,(\mo p^t)}}e\left(\frac{-(m_2a-m_3b)(\overline{a^2+b^2})+n_2a+n_3b}{p^{2t}}\right),$$
where 
\begin{align}
&f(a,b) \coloneqq -m_2(\overline{a^2+b^2})+2(m_2a-m_3b)a(\overline{a^2+b^2})^2+n_2\,(\mo p^t), \label{eq:f_for_p=2_r_3mod4} \\
&g(a,b) \coloneqq m_3(\overline{a^2+b^2})+2(m_2a-m_3b)b(\overline{a^2+b^2})^2+n_3\,(\mo p^t).\label{eq:g_for_p=2_r_3mod4}
\end{align}
As before, suppose $(a,p)=1$ (so $p\vert b$) and write $c=a^{-1}b$. Then
\begin{align*}
\Tilde{f}(a,c)&\coloneqq f(a,b)(a^2+b^2)^2a^{-2}\equiv n_2(1+c^2)^2a^2-(m_2c^2+2m_3c-m_2)\,(\mo p^t), \\
\Tilde{g}(a,c)&\coloneqq g(a,b)(a^2+b^2)^2a^{-2}\equiv n_3(1+c^2)^2a^2-(m_3c^2-2m_2c-m_3)\,(\mo p^t)
\end{align*}
are both linear in $a^2$, so $a^2$ can be uniquely expressed in terms of $c$ if $(n_2n_3,p)=1$. Solving $n_3\Tilde{f}(a,c)-n_2\Tilde{g}(a,c)\equiv-(m_2n_3-m_3n_2)c^2-2(m_2n_2+m_3n_3)c+(m_2n_3-m_3n_2)\equiv0\,(\mo p^t)$ for $c$ gives a bounded number of solutions unless the polynomial equation is completely degenerate. Each such solution $c$ yields at most $4$ solutions for $a$. On the other hand, if $r=4t+3$ for some $t\in\Z_{\geq1}$, then similarly
$$S_4(m_2,m_3,n_2,n_3;p^r) = p^{2t}\sum_{\substack{a,b\,(\mo p^t) \\ (a,b,p)=1 \\ p\vert ab \\ f(a,b)\equiv g(a,b)\equiv0\,(\mo p^t)}}e\left(\frac{-(m_2a-m_3b)\overline{(a^2+b^2)}+n_2a+n_3b}{p^{2t+1}}\right)\sum_{x=0}^{p-1}\sum_{y=0}^{p-1}e\left(\cdots\right),$$
with $f(a,b)$ and $g(a,b)$ as in \eqref{eq:f_for_p=2_r_3mod4} and \eqref{eq:g_for_p=2_r_3mod4}. Since we are only considering the prime $p=2$ here, it suffices to trivially bound the two inner sums by $2^2$.

Next, we consider the case with $p=2$ and $r$ even. If $r=4t$ for some $t\in\Z_{\geq2}$, then 
\begin{equation}\label{eq:so42_r_even}
S_4(m_2,m_3,n_2,n_3;p^r) = p^{2t}\sum_{\substack{a,b\,(\mo p^t) \\ (ab,p)=1 \\ f(a,b)\equiv g(a,b)\equiv0\,(\mo p^t)}}e\left(\frac{-(m_2a-m_3b)\overline{(a^2+b^2)p^{-1}}+n_2a+n_3b}{p^{2t}}\right)
\end{equation}
if $(m_2m_3,p)=1$, where 
\begin{align}
&f(a,b) \coloneqq -m_2\overline{(a^2+b^2)p^{-1}}+(m_2a-m_3b)a\overline{(a^2+b^2)p^{-1}}^2+n_2\,(\mo p^t), \label{eq:f_for_p=2_r_even} \\
&g(a,b) \coloneqq m_3\overline{(a^2+b^2)p^{-1}}+(m_2a-m_3b)b\overline{(a^2+b^2)p^{-1}}^2+n_3\,(\mo p^t). \label{eq:g_for_p=2_r_even}
\end{align}
As before, write $c=a^{-1}b$. Then 
\begin{align*}
\Tilde{f}(a,c) &\coloneqq f(a,b)(a^2+b^2)^2p^{-2}a^{-2} \equiv n_2(1+c^2)^2p^{-2}a^2 - (m_2(1+c^2)p^{-1}+m_3c-m_2) \,(\mo p^t), \\
\Tilde{g}(a,c) &\coloneqq g(a,b)(a^2+b^2)^2p^{-2}a^{-2} \equiv n_3(1+c^2)^2p^{-2}a^2 + (m_3(1+c^2)p^{-1}-m_3c^2+m_2c)\,(\mo p^t)
\end{align*}
are both linear in $a^2$, so $a^2$ can be uniquely expressed in terms of $c$ if $(n_2n_3,p)=1$. Solving $(n_3\Tilde{f}(a,c)-n_2\Tilde{g}(a,c))p\equiv-(m_2n_3-m_3n_2)c^2-2(m_2n_2+m_3n_3)c+(m_2n_3-m_3n_2)\equiv0\,(\mo p^t)$ for $c$ gives a bounded number of solutions unless the polynomial equation is completely degenerate. Each such solution yields at most $4$ solutions for $a$. To obtain \eqref{eq:so42_r_even}, one applies the congruence relation
\begin{align*}
&\overline{((a+p^tx)^2+(b+p^ty)^2)p^{-1}} \\
&\equiv\overline{(a^2+b^2)p^{-1}}(1-((ax+by)+(x^2+y^2)p^{t-1})\overline{(a^2+b^2)p^{-1}}p^{t}+(ax+by)^2\overline{(a^2+b^2)p^{-1}}^2p^{2t})\,(\mo p^{2t+1})
\end{align*}
for any $t\in\Z_{\geq2}$, $a,b\in\Z_p^\times$ and $x,y\in\Z_p$. This results in two generalized quadratic Gauss sums (instead of just sums of the form \eqref{eq:sum_of_the_form_1} and \eqref{eq:sum_of_the_form_2}), which we know how to treat. On the other hand, if $r=4t+2$ for some $t\in\Z_{\geq2}$, then similarly
$$S_4(m_2,m_3,n_2,n_3;p^r) = p^{2t}\sum_{\substack{a,b\,(\mo p^t) \\ (ab,p)=1 \\ f(a,b)\equiv g(a,b)\equiv0\,(\mo p^t)}}e\left(\frac{-(m_2a-m_3b)\overline{(a^2+b^2)p^{-1}}+n_2a+n_3b}{p^{2t+1}}\right)\sum_{x=0}^{p-1}\sum_{y=0}^{p-1}e\left(\dotsc\right),$$
with $f(a,b)$ and $g(a,b)$ as in \eqref{eq:f_for_p=2_r_even} and \eqref{eq:g_for_p=2_r_even}. Again, we bound the two inner sums trivially by $2^2$.

For $p=2$, we excluded finitely many values of $r$ in order to be able to apply the $p$-adic stationary phase method. In the remaining cases, we apply the trivial bound $S_4(m_2,m_3,n_2,n_3;p^r)\ll1$.

\end{proof}

\section*{References}
\printbibliography[heading=none]
\end{document}